\providecommand{\U}[1]{\protect\rule{.1in}{.1in}}
\newtheorem{theorem}{Theorem}
\newtheorem{corollary}[theorem]{Corollary}
\newtheorem{definition}[theorem]{Definition}
\newtheorem{lemma}[theorem]{Lemma}
\newtheorem{proposition}[theorem]{Proposition}
\newtheorem{remark}[theorem]{Remark}
\newenvironment{proof}[1][Proof]{\noindent\textbf{#1.} }{\ \rule{0.5em}{0.5em}}
\begin{document}

\title{Mean field limit of interacting filaments and vector valued non linear PDEs}
\author{Hakima Bessaih\thanks{University of Wyoming, Department of Mathematics, Dept.
3036, 1000 East University Avenue, Laramie WY 82071, United States,
bessaih@uwyo.edu}, Michele Coghi\thanks{ Scuola Normale Superiore, Pisa, P.zza dei Cavalieri 7, 56127, Pisa,
michele.coghi@gmail.com}, Franco Flandoli\thanks{ Dipartimento di Matematica, Universit\`{a} di Pisa, Largo Bruno Pontecorvo 5, 56127, Pisa, Italy, flandoli@dma.unipi.it}}
\date{}
\maketitle

\begin{abstract}
Families of $N$ interacting curves are considered, with long range, mean
field type, interaction. A family of curves defines a 1-current, concentrated on the
curves, analog of the empirical measure of interacting point particles.
This current is proved to converge, as $N$ goes to infinity, to a mean field current, solution of a nonlinear,
vector valued, partial differential equation. In the limit, each curve
interacts with the mean field current and two different curves have an
independence property if they are independent at time zero. This set-up
is inspired from vortex filaments in turbulent fluids, although for
technical reasons we have to restrict to smooth interaction, instead of
the singular Biot-Savart kernel. All these results are based on a
careful analysis of a nonlinear flow equation for 1-currents, its
relation with the vector valued PDE and the continuous dependence on the
initial conditions.
\end{abstract}

\section{Introduction}

Classical mean field theory deals with pointwise particles in $\mathbb{R}^{d}%
$, described by their position $X_{t}^{i,N}$, that satisfy dynamics of the
form
\begin{equation}
\frac{dX_{t}^{i,N}}{dt}=\frac{1}{N}\sum_{j=1}^{N}k\left(  X_{t}^{i,N}%
-X_{t}^{j,N}\right)  \label{interacting particles}%
\end{equation}
governed by the interaction kernel $k:$ $\mathbb{R}^{d}\rightarrow
\mathbb{R}^{d}$ (often a stochastic analog is considered but here we deal with
the deterministic case). Denoting by $S_{t}^{N}=\frac{1}{N}\sum_{i=1}%
^{N}\delta_{X_{t}^{i,N}}$ the empirical measure, if $k$ is bounded Lipschitz
continuous and $S_{0}^{N}$ weakly converges to a probability measure $\mu_{0}%
$, one can prove that $S_{t}^{N}$ weakly converges to a measure-valued
solution $\mu_{t}$ of the mean field equation%
\[
\frac{\partial\mu_{t}}{\partial t}+\operatorname{div}\left(  \left(  k\ast
\mu_{t}\right)  \mu_{t}\right)  =0
\]
with initial condition $\mu_{0}$, where $k\ast\mu_{t}$ is the vector field in
$\mathbb{R}^{d}$ with $i$-component given by the convolution $k_{i}\ast\mu
_{t}$; see \cite{Dob}.

Our aim is to develop an analogous result in the case when interacting points
are replaced by interacting curves, that we call "filaments" by inspiration
from the theory of vortex filaments in 3D fluids. The limit nonlinear PDE is
vector valued or, more precisely, \textit{current}-valued, as explained below.

The filament structures are curves in $\mathbb{R}^{d}$, $\gamma_{t}%
^{i,N}\left(  \sigma\right)  $, $i=1,...,N$, parametrized by $\sigma\in\left[
0,1\right]  $, and their interaction is described by the differential equation%
\[
\frac{\partial}{\partial t}\gamma_{t}^{i,N}\left(  \sigma\right)  =\sum
_{j=1}^{N}\alpha_{j}^{N}\int_{0}^{1}K\left(  \gamma_{t}^{i,N}\left(
\sigma\right)  -\gamma_{t}^{j,N}\left(  \sigma^{\prime}\right)  \right)
\frac{\partial}{\partial\sigma^{\prime}}\gamma_{t}^{j,N}\left(  \sigma
^{\prime}\right)  d\sigma^{\prime}%
\]
where $\alpha_{j}^{N}$ play the role of the factors $\frac{1}{N}$ in
(\ref{interacting particles}) and where now $K:\mathbb{R}^{d}\rightarrow
\mathbb{R}^{d\times d}$ is a smooth matrix-valued function (precisely, we need
$K$ of class $\mathcal{U}C_{b}^{3}(\mathbb{R}^{d},\mathbb{R}^{m})$, see
Section \ref{section preliminaries} for the definition). To the family of curves we associate a vector valued distribution (a
"current") $\xi_{t}:C_{b}\left(  \mathbb{R}^{d};\mathbb{R}^{d}\right)
\rightarrow\mathbb{R}$ formally given by %
\[
\xi_{t}^{N}=\sum_{j=1}^{N}\alpha_{j}^{N}\int_{0}^{1}\delta_{\gamma_{t}%
^{j,N}\left(  \sigma\right)  }\frac{\partial}{\partial\sigma}\gamma_{t}%
^{j,N}\left(  \sigma\right)  d\sigma
\]
which plays the role of the empirical measure $S_{t}^{N}$. The mean field
result will be that, under suitable assumptions on the initial conditions,
$\xi_{t}^{N}$ converges weakly to a current-valued solution $\xi_{t}$ of the vector-valued equation%
\begin{equation}
\frac{\partial\xi_{t}}{\partial t}+\operatorname{div}\left(  \left(  K\ast
\xi_{t}\right)  \xi_{t}\right)  =\xi_{t}\cdot\nabla\left(  K\ast\xi
_{t}\right)  \label{PDE for currents}%
\end{equation}
where $K\ast\xi_{t}$ is the vector field in $\mathbb{R}^{d}$ defined by
(\ref{def convol}) below and the meaning of the equation is given by
Definition \ref{solution}. Moreover, in the limit, each filament is coupled
only with the \textit{mean field} $\xi_{t}$:%
\begin{equation}
\frac{\partial}{\partial t}\gamma_{t}^{i}\left(  \sigma\right)  =\left(
K\ast\xi_{t}\right)  \left(  \gamma_{t}^{i}\left(  \sigma\right)  \right)
\label{filament and mean field}%
\end{equation}
and any two filaments in the limit have a suitable independence property, if the initial conditions are also independent (all these limit results require precise
statements, given in section \ref{subsection mean field}).

The investigation made here of interacting curves and the associated mean field PDE is motivated by the theory of vortex filaments in turbulent fluids. Starting from the simulations of \cite{VincMene}, a new vision of a three dimensional turbulent fluid appeared as a system composed of a large number of lower dimensional structures, in particular thin vortex structures. The idea is well described for instance by A. Chorin in his book \cite{Chorin}. For the purpose of turbulence, the investigation of large families of filaments was related to statistical properties, as we shall recall below. But, in parallel to statistical investigations, one of the natural questions is the relation between these families of filaments and the equations of fluid dynamics, the Euler or Navier-Stokes equations. In dimension 2, it is known that a proper mean field limit of point vortices leads to the 2D Euler equation. In dimension 3 this is an open problem, see for instance \cite{LionsMajda}. Our mean field result here is a contribution in this direction. We do not solve the true fluid dynamic problem, since we cannot consider Biot-Savart kernel $K$ yet, but at least for relatively smooth kernels we show that the expected result holds true.

Having mentioned the link with fluid dynamics and works on vortex filaments, let us give more details and some references. As we have already said, the importance of thin vortex structures in 3D turbulence has been discussed
intensively, especially after the striking simulations of \cite{VincMene}.
While the situation in the two-dimensional case is pretty understood, this is
not the case in the three-dimensional case. Chorin \cite{Chorin} has
emphasized both the similarities and differences between statistical theories
for heuristic models for ensembles of three-dimensional vortex filaments and
the earlier two-dimensional statistical theories for point vortices. Some
probabilistic models of vortex filaments based on the paths of stochastic
processes have been proposed in \cite{Gallavotti}, \cite{LionsMajda},
\cite{Flandoli}, \cite{FlaGub-2002}, \cite{FlaMin}, \cite{NuaRovTin}. The
importance of these models for the statistics of turbulence or for the
understanding of 3D Euler equations is of high importance. Let us mention that
the existence and uniqueness of solutions for the dynamics of vortex filaments
has been investigated in \cite{BerBes} and for a random vortex filament
\cite{BesGubRus}, \cite{BrzGubNek} and in \cite{ChanBes} in the case of
fractional Brownian motion. Of course, all the previous references mentioned
deals with a smoothened version of the dynamics which is related to a
mollified version of the Biot-Savart formula.

Statistical ensembles of vortex filaments arise many questions. One of them,
approached with success by Onsager and subsequent authors in dimension 2, is
the mean field limit of a dense collection of many interacting vortices. In
dimension 3 this question has been investigated successfully by P. L. Lions
and A. Majda. In \cite{LionsMajda}, they develop the first mathematically
rigorous equilibrium statistical theory for three-dimensional vortex filaments
in the context of a model involving simplified asymptotic equations for nearly
parallel vortex filaments. Their equilibrium Gibbs ensemble is written down
exactly through function space integrals; then a suitably scaled mean field
statistical theory is developed in the limit of infinitely many interacting
filaments. The mean field equations involved a novel Hartree-like problem. A
similar approach has been used for stochastic vortex filaments in
\cite{BesFla-2003}, \cite{BesFla-2004} where the Gibbs measure was based on a
previous rigorous definition introduced in \cite{FlaGub-2002}. The mean field
was proved to be solution of a variational formulation but given in an
implicit form.

As far as the content of the paper, section 2 is devoted to the  introduction of the space of 
currents (1-forms) provided with its strong and weak topologies. 
The push forward of 1-currents is defined with some properties.  In section 3, Lagrangian current dynamics are introduced. 
A flow equation for the current is defined 
by taking the push forward of an initial current under the flow of diffeomorphisms generated by a  general differential equation.  
The existence and uniqueness of maximal solutions for the flow are proved under some assumptions by means of a fixed point argument. Section 4 is devoted to the Eulerian current dynamics. In particular, we prove that the two formulations are equivalent. In particular, the well posedness of the Lagrangian formulation translates into the well posednes of the Eulerian formulation and viceversa.   In section 5, a result about continuous dependence on initial conditions is proved, that will be used later for proving a mean field result. A sequence of interacting curves (filaments) are defined  in section 6. These curves are  solutions of a system of differential equations (with a scaling $\alpha_{j}^N$),  that describe our flow of diffeomorphism. 
Here we are using a smooth kernel which  could be a mollified version of the Biot-Savart formula.
To this family of curves, we associate a  current defined in the vein of empirical measures.  We prove a mean field result when the number of filament $N\to\infty$. A similar result is also proved when the filaments are random in section 6.3. 

\section{Preliminaries on 1-currents\label{section preliminaries}}

Given $k,d,m\in\mathbb{N}$, we denote by $C_{b}^{k}(\mathbb{R}^{d}%
,\mathbb{R}^{m})$ the space of all functions $f:\mathbb{R}^{d}\rightarrow
\mathbb{R}^{m}$ that are of class $C^{k}$, bounded with all derivatives of
orders up to $k$. By $\mathcal{U}C_{b}^{3}(\mathbb{R}^{d},\mathbb{R}^{m})$ we
denote the subset of $C_{b}^{3}(\mathbb{R}^{d},\mathbb{R}^{m})$ of those
functions $f$ such that $f$, $Df$ and $D^{2}f$ are also uniformly continuous.

\subsection{Generalities\label{subsect generalities on currents}}

Currents of dimension 1 (called 1-currents here) are linear continuous
mappings on the space $C_{0}^{\infty}\left(  \mathbb{R}^{d},\mathbb{R}%
^{d}\right)  $ of smooth compact support vector fields of $\mathbb{R}^{d}$. In
the sequel we shall only consider 1-currents which are continuous in the
$C_{b}\left(  \mathbb{R}^{d},\mathbb{R}^{d}\right)  $ topology.

Moreover, consider the space $C_{b}\left(  \mathbb{R}^{d};\mathbb{R}%
^{d}\right)  $ of continuous and bounded vector fields on $\mathbb{R}^{d}$,
denote the uniform topology by $\left\Vert \cdot\right\Vert _{\infty}$ and
consider the following Banach space of 1-currents:%
\[
\mathcal{M}:\mathcal{=}C_{b}\left(  \mathbb{R}^{d};\mathbb{R}^{d}\right)
^{\prime}.
\]
The topology induced by the duality will be denoted by $\left\vert
\cdot\right\vert _{\mathcal{M}}$:%
\[
\left\vert \xi\right\vert _{\mathcal{M}}:=\sup_{\left\Vert \theta\right\Vert
_{\infty}\leq1}\left\vert \xi\left(  \theta\right)  \right\vert .
\]

We are interested in the weak topology too, essential to deal with
approximation by \textquotedblleft filaments\textquotedblright. We define%
\[
\left\Vert \xi\right\Vert =\sup\{\xi(\theta)\;|\;\Vert\theta\Vert_{\infty
}+\text{Lip}(\theta)\leq1\}
\]
where Lip$(\theta)$ is the Lipschitz constant of $\theta$. We set%

\[
d\left(  \xi,\xi^{\prime}\right)  =\left\Vert \xi-\xi^{\prime}\right\Vert
\]
for all $\xi,\xi^{\prime}\in\mathcal{M}$. The number $\left\Vert
\xi\right\Vert $ is well defined and%
\[
\left\Vert \xi\right\Vert \leq\left\vert \xi\right\vert _{\mathcal{M}}%
\]
and $d\left(  \xi,\xi^{\prime}\right)  $ satisfies the conditions of a
distance. Convergence in the metric space $\left(  \mathcal{M},d\right)  $
corresponds to weak convergence in $\mathcal{M}$ as dual to $C_{b}\left(
\mathbb{R}^{d} ;\mathbb{R}^{d}\right)  $. Recall the following fact:

\begin{lemma}
\label{lemma complete metric space}If $B$ is a closed ball in $\left(
\mathcal{M},\left\vert \cdot\right\vert _{\mathcal{M}}\right)  $, then
$\left(  B,d\right)  $ is a complete metric space.
\end{lemma}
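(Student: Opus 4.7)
My plan is to take a $d$-Cauchy sequence $(\xi_n)$ in $B$, build a candidate limit $\xi$ by taking pointwise limits against bounded Lipschitz test fields, and then show both that $\xi$ lies in $B$ and that $d(\xi_n,\xi)\to 0$.

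Let $R$ denote the radius of $B$, so $|\xi_n|_{\mathcal{M}}\leq R$ for every $n$. For any bounded Lipschitz $\theta\in C_b(\mathbb{R}^d;\mathbb{R}^d)$, the elementary estimate
\[
|\xi_n(\theta)-\xi_m(\theta)|\leq\bigl(\|\theta\|_\infty+\mathrm{Lip}(\theta)\bigr)\,d(\xi_n,\xi_m)
\]
shows that $(\xi_n(\theta))_n$ is Cauchy in $\mathbb{R}$; call its limit $L(\theta)$. The functional $L$ is linear on the subspace $X\subset C_b(\mathbb{R}^d;\mathbb{R}^d)$ of bounded Lipschitz vector fields, and passing to the limit in $|\xi_n(\theta)|\leq R\|\theta\|_\infty$ gives $|L(\theta)|\leq R\|\theta\|_\infty$.

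To produce an honest element of $\mathcal{M}$, I would invoke Hahn--Banach: $L$ is a linear functional on the subspace $X\subset C_b(\mathbb{R}^d;\mathbb{R}^d)$ dominated by $R\|\cdot\|_\infty$, hence it extends to some $\xi\in\mathcal{M}$ with $|\xi|_{\mathcal{M}}\leq R$, so $\xi\in B$. This is the one place where some care is needed: a naive density argument does not work, because bounded Lipschitz vector fields are \emph{not} uniformly dense in $C_b(\mathbb{R}^d;\mathbb{R}^d)$ (a bounded continuous function on $\mathbb{R}^d$ need not be uniformly continuous, and uniform limits of Lipschitz functions are uniformly continuous). Hahn--Banach sidesteps this obstacle cleanly.

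To conclude, fix $\varepsilon>0$ and pick $N$ with $d(\xi_n,\xi_m)\leq\varepsilon$ for all $n,m\geq N$. For any $\theta$ with $\|\theta\|_\infty+\mathrm{Lip}(\theta)\leq 1$ and any $n\geq N$, letting $m\to\infty$ in the inequality $|\xi_n(\theta)-\xi_m(\theta)|\leq\varepsilon$ yields $|\xi_n(\theta)-\xi(\theta)|\leq\varepsilon$ (using $\xi(\theta)=L(\theta)$ on $X$). Taking the supremum over such $\theta$ gives $d(\xi_n,\xi)\leq\varepsilon$, hence $\xi_n\to\xi$ in $(\mathcal{M},d)$, and $(B,d)$ is complete.
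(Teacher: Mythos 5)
Your proof is correct and follows essentially the same route as the paper's: construct the limit functional on the subspace of bounded Lipschitz fields (you do this by pointwise convergence, the paper by invoking completeness of $\mathrm{Lip}_b(\mathbb{R}^d,\mathbb{R}^d)'$, which amounts to the same thing), extend to all of $C_b(\mathbb{R}^d;\mathbb{R}^d)$ via Hahn--Banach with the $R\|\cdot\|_\infty$ bound, and pass to the limit in the Cauchy estimate to obtain $d$-convergence. Your remark explaining why Hahn--Banach is genuinely needed---bounded Lipschitz fields are not uniformly dense in $C_b(\mathbb{R}^d;\mathbb{R}^d)$---is a nice piece of motivation that the paper leaves implicit.
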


\begin{proof}
Let $\{\xi_{n}\}_{n\geq0}$ be a Cauchy sequence in $(B,d)$. This is also a
Cauchy sequence in the dual space Lip$_{b}(\mathbb{R}^{d},\mathbb{R}%
^{d})^{\prime}$ with the dual operator norm. Hence it converges to some
$\xi\in$Lip$_{b}(\mathbb{R}^{d},\mathbb{R}^{d})^{\prime}$. Indeed
Lip$_{b}(\mathbb{R}^{d},\mathbb{R}^{d})$ is a Banach space and $\Vert
\cdot\Vert$ is the operator norm on his dual, which is complete.

Now we have an operator $\xi$ defined on Lip$_{b}(\mathbb{R}^{d}%
,\mathbb{R}^{d})$, we want to extend it to the bigger space $C_{b}\left(
\mathbb{R}^{d};\mathbb{R}^{d}\right)  $ and to show that this extension is a
limit to the sequence $\xi_{n}$ in the norm $\Vert\cdot\Vert$.

Given $\theta\in$Lip$_{b}(\mathbb{R}^{d},\mathbb{R}^{d})$, it holds, for every
$n\in\mathbb{N}$,
\[
|\xi(\theta)|\leq|(\xi-\xi_{n})(\theta)|+|\xi_{n}(\theta)|\leq\Vert\xi-\xi
_{n}\Vert(\Vert\theta\Vert_{\infty}+\text{Lip}(\theta))+R\Vert\theta
\Vert_{\infty}%
\]
where $R$ denotes the radius of $B$. Hence, as $n\rightarrow\infty$, it holds
$|\xi(\theta)|\leq R\Vert\theta\Vert_{\infty}$. We can thus apply Hahn-Banach
theorem to obtain a linear functional $\bar{\xi}$ defined on $C_{b}\left(
\mathbb{R}^{d};\mathbb{R}^{d}\right)  $ such that $\Vert\bar{\xi}\Vert\leq R$
and $\bar{\xi}\equiv\xi$ on Lip$_{b}(\mathbb{R}^{d},\mathbb{R}^{d})^{\prime}$.

It only remains to prove that $\xi_{n}$ converges to $\bar{\xi}$,
\[
\Vert\bar{\xi}-\xi_{n}\Vert=\sup\{\bar{\xi}(\theta) - \xi_n(\theta)\;|\;\Vert
\theta\Vert_{\infty}+\text{Lip}(\theta)\leq1\}
\]%
\[
=\sup\{\xi(\theta) - \xi_n(\theta)\;|\;\Vert\theta\Vert_{\infty}+\text{Lip}%
(\theta)\leq1\}=\Vert\xi-\xi_{n}\Vert\rightarrow0,\quad\text{as}%
\;n\rightarrow\infty.
\]

\end{proof}

We shall denote by $\mathcal{M}_{w}$ the space $\mathcal{M}$ endowed by the
metric $d$.

If $\xi\in\mathcal{M}$ and $K:\mathbb{R}^{d}\rightarrow\mathbb{R}^{d\times d}$
is a continuous bounded matrix-valued function, then $K\ast\xi$ is the vector
field in $\mathbb{R}^{d}$ with $i$-component given by%
\begin{equation}
\left(  K\ast\xi\right)  _{i}\left(  x\right)  =\left(  K_{i\cdot}\ast
\xi\right)  \left(  x\right)  :=\xi\left(  K_{i\cdot}\left(  x-\cdot\right)
\right)  \label{def convol}%
\end{equation}
where $K_{i\cdot}\left(  z\right)  $ is the vector $\left(  K_{ij}\left(
z\right)  \right)  _{j=1,...,d}$. We have
\[
\left\vert \left(  K\ast\xi\right)  \left(  x\right)  \right\vert
\leq\left\vert \xi\right\vert _{\mathcal{M}}\Vert K\Vert_{\infty}.
\]
If $K$, in addition, is also of class $C_{b}^{1}(\mathbb{R}^{d},\mathbb{R}%
^{m})$, then%
\begin{equation}
\left\vert \left(  K\ast\xi\right)  \left(  x\right)  \right\vert
\leq\left\Vert \xi\right\Vert \left(  \Vert K\Vert_{\infty}+\Vert
DK\Vert_{\infty}\right)  . \label{bound on convolution}%
\end{equation}

\subsection{Push-forward}

Let $\theta\in C_{b}\left(  \mathbb{R}^{d},\mathbb{R}^{d}\right)  $ be a
vector field (test function) and $\varphi:\mathbb{R}^{d}\rightarrow
\mathbb{R}^{d}$ be a map. When defined, the \textit{push-forward} of $\theta$
is%
\[
\left(  \varphi_{\sharp}\theta\right)  \left(  x\right)  =D\varphi\left(
x\right)  ^{T}\theta\left(  \varphi\left(  x\right)  \right)  .
\]
If $\varphi$ is of class $C^{1}\left(  \mathbb{R}^{d};\mathbb{R}^{d}\right)
$, then $\varphi_{\sharp}$ is a well defined bounded linear map from
$C_{b}\left(  \mathbb{R}^{d},\mathbb{R}^{d}\right)  $ to itself.

Given a curve $\gamma:\left[  0,1\right]  \rightarrow\mathbb{R}^{d}$ of class
$C^{1}$ ($W^{1,1}$ is sufficient), consider the current%
\[
\xi=\int_{0}^{1}\delta\left(  \cdot-\gamma\left(  \sigma\right)  \right)
\frac{d\gamma}{d\sigma}\left(  \sigma\right)  d\sigma
\]
namely the linear functional $\xi:C_{b}\left(  \mathbb{R}^{d},\mathbb{R}%
^{d}\right)  \rightarrow\mathbb{R}$ defined as
\[
\xi\left(  \theta\right)  =\int_{0}^{1}\left\langle \theta\left(
\gamma\left(  \sigma\right)  \right)  ,\frac{d\gamma}{d\sigma}\left(
\sigma\right)  \right\rangle _{\mathbb{R}^{d}}d\sigma.
\]

If $\varphi\in C^{1}\left(  \mathbb{R}^{d},\mathbb{R}^{d}\right)  ,$we define%
\[
\varphi_{\sharp}\xi:=\int_{0}^{1}\delta\left(  \cdot-\varphi\left(
\gamma\left(  \sigma\right)  \right)  \right)  D\varphi\left(  \gamma\left(
\sigma\right)  \right)  \frac{d\gamma}{d\sigma}\left(  \sigma\right)
d\sigma.
\]

\begin{remark}
Given $\gamma$ and $\varphi$, define the curve $\eta:\left[  0,1\right]
\rightarrow\mathbb{R}^{d}$ as $\eta\left(  \sigma\right)  =\varphi\left(
\gamma\left(  \sigma\right)  \right)  $. Notice that $\frac{d\eta}{d\sigma
}\left(  \sigma\right)  =D\varphi\left(  \gamma\left(  \sigma\right)  \right)
\frac{d\gamma}{d\sigma}\left(  \sigma\right)  $. Hence $\varphi_{\sharp}\xi$
is the current associated to the curve $\eta$.
\end{remark}

For this example of push-forward, we have the following relation:%
\[
\left(  \varphi_{\sharp}\xi\right)  \left(  \theta\right)  =\xi\left(
\varphi_{\sharp}\theta\right)  .
\]

Indeed,%
\begin{align*}
\left(  \varphi_{\sharp}\xi\right)  \left(  \theta\right)   &  =\int_{0}%
^{1}\left\langle \theta\left(  \varphi\left(  \gamma\left(  \sigma\right)
\right)  \right)  ,D\varphi\left(  \gamma\left(  \sigma\right)  \right)
\frac{d\gamma}{d\sigma}\left(  \sigma\right)  \right\rangle _{\mathbb{R}^{d}%
}d\sigma\\
&  =\int_{0}^{1}\left\langle D\varphi\left(  \gamma\left(  \sigma\right)
\right)  ^{T}\theta\left(  \varphi\left(  \gamma\left(  \sigma\right)
\right)  \right)  ,\frac{d\gamma}{d\sigma}\left(  \sigma\right)  \right\rangle
_{\mathbb{R}^{d}}d\sigma\\
&  =\int_{0}^{1}\left\langle \left(  \varphi_{\sharp}\theta\right)  \left(
\gamma\left(  \sigma\right)  \right)  ,\frac{d\gamma}{d\sigma}\left(
\sigma\right)  \right\rangle _{\mathbb{R}^{d}}d\sigma.
\end{align*}

Motivated by the previous computation (which is relevant by itself because
ultimately we want to deal with vortex filaments), given a 1-current $\xi
\in\mathcal{M}$ and a smooth map $\varphi:\mathbb{R}^{d}\rightarrow
\mathbb{R}^{d}$ we define the push-forward $\varphi_{\sharp}\xi$ as the
current%
\[
\left(  \varphi_{\sharp}\xi\right)  \left(  \theta\right)  :=\xi\left(
\varphi_{\sharp}\theta\right)  ,\qquad\theta\in C_{b}\left(  \mathbb{R}%
^{d},\mathbb{R}^{d}\right)  .
\]

We have seen above that $\varphi_{\sharp}\xi$ has a nice reformulation when
$\xi$ is associated to a smooth curve. Let us find a reformulation when $\xi$
is associated to a vector field. Thus, with little abuse of notations, let
$\xi: \mathbb{R}^{d}\rightarrow\mathbb{R}^{d}$ be an integrable vector field
and denote by $\xi$ the associated current defined as%
\[
\xi\left(  \theta\right)  =\int_{\mathbb{R}^{d}}\left\langle \theta\left(
x\right)  ,\xi\left(  x\right)  \right\rangle _{\mathbb{R}^{d}}dx.
\]

\begin{proposition}
\label{Prop push forward vector fields}Assume that $\varphi$ is a
diffeomorphism of $\mathbb{R}^{d}$ and $\xi$ is a vector field on
$\mathbb{R}^{d}$ in $\mathbb{R}^{d}$ of class $L^{1}$. Then $\varphi_{\sharp
}\xi$ is the following vector field in $\mathbb{R}^{d}$, of class $L^{1}$:
\[
\left(  \varphi_{\sharp}\xi\right)  \left(  x\right)  =D\varphi\left(
\varphi^{-1}\left(  x\right)  \right)  \xi\left(  \varphi^{-1}\left(
x\right)  \right)  \left\vert \det D\varphi^{-1}\left(  x\right)  \right\vert
.
\]

\end{proposition}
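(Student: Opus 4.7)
The plan is to compute $(\varphi_\sharp \xi)(\theta)$ directly from its definition, starting from the explicit integral representation of $\xi$ as an $L^1$ vector field, and then perform the change of variables $y = \varphi(x)$ to read off the vector field representing $\varphi_\sharp \xi$.

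Concretely, I would first unfold the definition of the push-forward of the test function: $(\varphi_\sharp \theta)(x) = D\varphi(x)^T \theta(\varphi(x))$. Plugging into $\xi(\varphi_\sharp \theta) = \int_{\mathbb{R}^d} \langle (\varphi_\sharp \theta)(x), \xi(x)\rangle \, dx$ and using $\langle D\varphi(x)^T u, v\rangle = \langle u, D\varphi(x) v\rangle$, I obtain
\[
(\varphi_\sharp \xi)(\theta) = \int_{\mathbb{R}^d} \langle \theta(\varphi(x)),\, D\varphi(x)\,\xi(x)\rangle \, dx .
\]
Since $\varphi$ is a diffeomorphism, the substitution $y = \varphi(x)$, $x = \varphi^{-1}(y)$, with $dx = |\det D\varphi^{-1}(y)|\, dy$, is globally valid and yields
\[
(\varphi_\sharp \xi)(\theta) = \int_{\mathbb{R}^d} \bigl\langle \theta(y),\, D\varphi(\varphi^{-1}(y))\,\xi(\varphi^{-1}(y))\bigr\rangle \, |\det D\varphi^{-1}(y)|\, dy,
\]
which is exactly the pairing of $\theta$ with the vector field in the statement.

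To conclude, I would verify that this vector field is indeed in $L^1$: by the same change of variables (in reverse),
\[
\int_{\mathbb{R}^d} \bigl| D\varphi(\varphi^{-1}(y))\,\xi(\varphi^{-1}(y))\bigr|\, |\det D\varphi^{-1}(y)|\, dy = \int_{\mathbb{R}^d} |D\varphi(x)\,\xi(x)|\, dx \le \|D\varphi\|_\infty \|\xi\|_{L^1},
\]
so the candidate vector field belongs to $L^1(\mathbb{R}^d;\mathbb{R}^d)$ under the standing assumption that $\varphi$ has bounded first derivatives (implicit in the paper's setting of smooth diffeomorphisms arising from the flow of a $C^1_b$ vector field). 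Since the identity with $\varphi_\sharp \xi$ holds when tested against every $\theta \in C_b(\mathbb{R}^d;\mathbb{R}^d)$, the two currents coincide. The only mildly delicate point is the legitimacy of the global change of variables and the accompanying $L^1$ bound, but both are immediate once $\varphi$ is a $C^1$ diffeomorphism with $D\varphi$ and $D\varphi^{-1}$ locally (indeed globally, in our context) controlled; no genuine obstacle is expected.
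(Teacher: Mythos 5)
Your proposal follows exactly the paper's argument: unfold $(\varphi_\sharp\xi)(\theta)=\xi(\varphi_\sharp\theta)$, move $D\varphi(x)^T$ across the inner product, and change variables $y=\varphi(x)$. The only addition is your explicit verification of the $L^1$ bound, which the paper leaves implicit; this is correct and complements, rather than departs from, the paper's proof.
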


\begin{proof}
By definition we have%
\begin{align*}
\left(  \varphi_{\sharp}\xi\right)  \left(  \theta\right)   &  =\xi\left(
\varphi_{\sharp}\theta\right)  =\int_{\mathbb{R}^{d}}\left\langle
D\varphi\left(  x\right)  ^{T}\theta\left(  \varphi\left(  x\right)  \right)
,\xi\left(  x\right)  \right\rangle _{\mathbb{R}^{d}}dx\\
&  =\int_{\mathbb{R}^{d}}\left\langle \theta\left(  \varphi\left(  x\right)
\right)  ,D\varphi\left(  x\right)  \xi\left(  x\right)  \right\rangle
_{\mathbb{R}^{d}}dx\\
&  \overset{y=\varphi\left(  x\right)  }{=}\int_{\mathbb{R}^{d}}\left\langle
\theta\left(  y\right)  ,D\varphi\left(  \varphi^{-1}\left(  y\right)
\right)  \xi\left(  \varphi^{-1}\left(  y\right)  \right)  \right\rangle
_{\mathbb{R}^{d}}\left\vert \det D\varphi^{-1}\left(  y\right)  \right\vert
dy.
\end{align*}

\end{proof}

\section{Lagrangian current dynamics}

In order to prove that the nonlinear vector-valued PDE (\ref{PDE for currents}%
) with initial condition $\xi_{0}\in\mathcal{M}$, has unique local solutions
in the space of currents, we adopt a Lagrangian point of view:\ we examine the
ordinary differential equation%
\begin{equation}
\frac{dx_{t}}{dt}=\left(  K\ast\xi_{t}\right)  \left(  x_{t}\right)  ,
\label{system eq 1}%
\end{equation}
consider the flow of diffeomorphisms $\varphi^{t,K\ast\xi}$ generated by it
and take the push forward of $\xi_{0}$ under this flow:
\begin{equation}
\xi_{t}=\varphi_{\sharp}^{t,K\ast\xi}\xi_{0},\qquad t\in\left[  0,T\right]  .
\label{system eq 2}%
\end{equation}
The pair of equations (\ref{system eq 1})-(\ref{system eq 2}) defines a closed
system for $\left(  \xi_{t}\right)  _{t\in\left[  0,T\right]  }$ which, for
small $T$, has a unique solution. We shall prove then that current-valued
solutions of the PDE (\ref{PDE for currents}) are in one-to-one correspondence
with current-valued solutions of the flow system (\ref{system eq 1}%
)-(\ref{system eq 2}) and thus we get local existence and uniqueness for
(\ref{PDE for currents}).

Since the specific linear form $K\ast\xi_{t}$ for the drift of equation
(\ref{system eq 1}) is irrelevant, we replace it with a more general, possibly
non-linear, map. Thus we investigate a "flow equation" of the form%
\[
\xi_{t}=\varphi_{\sharp}^{t,B\left(  \xi\right)  }\xi_{0},\qquad t\in\left[
0,T\right]
\]
where $B\left(  \xi_{t}\right)  $ is a time-dependent vector field in
$\mathbb{R}^{d}$, associated to the time-dependent current $\xi_{t}$, and
$\varphi^{t,B\left(  \xi\right)  }$ is the flow associated to $B\left(
\xi\right)  $ by the equation%
\begin{equation}
\frac{dx_{t}}{dt}=B\left(  \xi_{t}\right)  \left(  x_{t}\right)  .
\label{ODE with B}%
\end{equation}

\subsection{Assumptions on the drift}

Let us discuss the general assumptions that we impose on the drift $B$ of
equation (\ref{ODE with B}). We assume
\begin{equation}
B:\mathcal{M}_{w}\rightarrow C_{b}^{2}\left(  \mathbb{R}^{d},\mathbb{R}%
^{d}\right)  \label{assumption on B}%
\end{equation}
to be a continuous map such that for every $\xi,\xi^{\prime}\in\mathcal{M}$%

\begin{equation}
\Vert B(\xi)\Vert_{C_{b}^{2}}\leq C_{B}\left(  \Vert\xi\Vert+1\right)
\label{B1}%
\end{equation}

\begin{equation}
\label{B2}\|B(\xi)- B(\xi^{\prime})\|_{\infty}\leq C_{B}\|\xi-\xi^{\prime}\|
\end{equation}

\begin{equation}
\Vert DB(\xi)-DB(\xi^{\prime})\Vert_{\infty}\leq C_{B}\Vert\xi-\xi^{\prime
}\Vert\label{B3}%
\end{equation}
We denote by $DB$ and $D^{2}B$ the derivatives of $B$ in the $x\in
\mathbb{R}^{3}$ variable.

Our main example of $B$ is the linear function
\[
B(\xi)=K\ast\xi
\]
where $K:\mathbb{R}^{d}\rightarrow\mathbb{R}^{d\times d}$ (see
(\ref{def convol})). The necessary regularity of $K$ is specified by next lemma.

\begin{lemma}
\label{lemma on K}Let $K\in\mathcal{U}C_{b}^{3}(\mathbb{R}^{d},\mathbb{R}%
^{d\times d})$. Then $B(\xi)=K\ast\xi$ maps continuously $\mathcal{M}$ in to
$C_{b}^{2}\left(  \mathbb{R}^{d},\mathbb{R}^{d}\right)  $ and satisfies
assumptions (\ref{B1})-(\ref{B3}).
\end{lemma}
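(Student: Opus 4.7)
The plan is to leverage the weak bound (\ref{bound on convolution}) applied to $K$ and its first two derivatives, together with the linearity of convolution in $\xi$. Two things must be done: (a) show that $B(\xi)\in C_b^2$ and that $x$-derivatives can be pulled inside the action of $\xi$, and (b) upgrade each resulting pointwise bound to a Lipschitz estimate in the weak norm $\|\cdot\|$. For (a), I would fix $i$ and write $B(\xi)_i(x)=\xi(K_{i\cdot}(x-\cdot))$. For a derivative in direction $e_j$ the difference quotient is
\begin{equation*}
\frac{B(\xi)_i(x+he_j)-B(\xi)_i(x)}{h}=\xi\!\left(\int_0^1 \partial_j K_{i\cdot}(x+she_j-\cdot)\,ds\right).
\end{equation*}
Uniform continuity of $\partial_j K$, guaranteed by $K\in\mathcal{U}C_b^3$, makes the integrand converge in $\|\cdot\|_\infty$ to $\partial_j K_{i\cdot}(x-\cdot)$ as $h\to 0$, and continuity of $\xi$ on $C_b$ then yields $\partial_j B(\xi)_i(x)=\xi(\partial_j K_{i\cdot}(x-\cdot))$. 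The same argument, applied to $\partial_j K$ and using uniform continuity of $D^2K$, gives the analogous identity for second-order $x$-derivatives and also continuity in $x$ of $\partial^\alpha B(\xi)$. Hence $B(\xi)\in C_b^2$ with $\partial^\alpha B(\xi)=\partial^\alpha K\ast\xi$ for $|\alpha|\leq 2$.

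For step (b), since $K\in C_b^3$, each $\partial^\alpha K$ with $|\alpha|\leq 2$ still lies in $C_b^1$, so applying (\ref{bound on convolution}) to these functions gives
\begin{equation*}
\|\partial^\alpha B(\xi)\|_\infty\leq \|\xi\|\bigl(\|\partial^\alpha K\|_\infty+\|D\partial^\alpha K\|_\infty\bigr)\leq C_K\|\xi\|,\qquad |\alpha|\leq 2,
\end{equation*}
which sums to (\ref{B1}) (the $+1$ being redundant). The bounds (\ref{B2}) and (\ref{B3}) follow immediately from the linearity of convolution against a current: $B(\xi)-B(\xi')=K\ast(\xi-\xi')$ and $DB(\xi)-DB(\xi')=DK\ast(\xi-\xi')$, so applying (\ref{bound on convolution}) to $K$ and $DK$ respectively gives the two Lipschitz estimates in the weak norm. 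The same reasoning applied once more to $D^2K$ produces a Lipschitz bound for $D^2B$, so that $B:\mathcal{M}_w\to C_b^2(\mathbb{R}^d,\mathbb{R}^d)$ is in fact Lipschitz continuous, which in particular establishes the continuity claim of the lemma.

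The main obstacle is step (a), the interchange of the $x$-derivative with the evaluation against $\xi$. This is precisely where the assumption $K\in\mathcal{U}C_b^3$, rather than merely $C_b^3$, is essential: without uniform continuity of $DK$ and $D^2K$, the Taylor remainders would converge only pointwise in the convolution variable, which is insufficient for evaluation against a general element of $C_b(\mathbb{R}^d;\mathbb{R}^d)'$. Once this interchange is granted, the rest of the proof is purely linear bookkeeping via (\ref{bound on convolution}).
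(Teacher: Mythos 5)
Your proposal is correct and follows essentially the same path as the paper's own proof: represent the difference quotient via a mean-value integral, use uniform continuity of $DK$ (resp.\ $D^2K$) to obtain uniform convergence of that integrand, exchange with $\xi$ to identify $\partial^\alpha B(\xi)=\partial^\alpha K\ast\xi$, and then invoke the bound (\ref{bound on convolution}) together with linearity of $\xi\mapsto K\ast\xi$ to get (\ref{B1})--(\ref{B3}). Your observation that the ``$+1$'' in (\ref{B1}) is superfluous for this linear $B$ is accurate but does not change the argument.
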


\begin{proof}
Since $K\in C_{b}(\mathbb{R}^{d},\mathbb{R}^{d\times d})$, $K\ast
\xi:\mathbb{R}^{d}\rightarrow\mathbb{R}^{d}$ is a well defined function, for
every $\xi\in\mathcal{M}$. From (\ref{def convol}) and the uniform continuity
of $K$ it follows that $K\ast\xi$ is a continuous function: if $x_{n}%
\rightarrow x$, then $K_{i\cdot}\left(  x_{n}-\cdot\right)  \rightarrow
K_{i\cdot}\left(  x-\cdot\right)  $ uniformly, for every $i=1,...,d$. It is
bounded, since%
\begin{equation}
\left\vert \left(  K\ast\xi\right)  _{i}\left(  x\right)  \right\vert
\leq\left\Vert \xi\right\Vert \left(  \left\Vert K\right\Vert _{\infty
}+\left\Vert DK\right\Vert _{\infty}\right)  \label{uniform ineq in weak norm}%
\end{equation}
Moreover, the linear map $B:\mathcal{M}_{w}\rightarrow C_{b}\left(
\mathbb{R}^{d},\mathbb{R}^{d}\right)  $, just defined is continous in the weak
topology of $\mathcal{M}$: from the previous inequality it follows \
\[
\left\Vert \left(  K\ast\xi\right)  _{i}-\left(  K\ast\xi^{\prime}\right)
_{i}\right\Vert _{\infty}\leq\left\Vert \xi-\xi^{\prime}\right\Vert \left(
\left\Vert K\right\Vert _{\infty}+\left\Vert DK\right\Vert _{\infty}\right)
.
\]

Let us show that all the same facts extend to the first derivatives of
$K\ast\xi$. Since $DK$ is uniformly continuous and bounded, from%
\begin{align*}
&  \left\vert \frac{K_{ij}(x+\epsilon h)-K_{ij}(x)}{\epsilon}-DK_{ij}(x)\cdot
h\right\vert \\
&  =\left\vert \int_{0}^{1}\frac{1}{\epsilon}DK_{ij}((1-\alpha) x+\alpha
(x+\epsilon h))\cdot\epsilon h\;d\alpha-\int_{0}^{1}DK_{ij}(x)\cdot
h\;d\alpha\right\vert \\
&  \leq\int_{0}^{1}\left\vert DK_{ij}((1-\alpha) x+\alpha
(x+\epsilon h))
-DK_{ij}(x)\right\vert \;d\alpha
\end{align*}
it follows that the incremental ratio of $K_{ij}$ in a direction $h$ converges
uniformly to $DK_{ij}\cdot h$. From%
\[
\lim_{\epsilon\rightarrow0}\frac{1}{\epsilon}\xi(K_{i\cdot}(x+\epsilon
h-\cdot))-\xi(K_{i\cdot}(x-\cdot))=\lim_{\epsilon\rightarrow0}\xi\left(
\frac{K_{i\cdot}(x+\epsilon h-\cdot)-K_{i\cdot}(x-\cdot)}{\epsilon}\right)
\]
it follows that $K\ast\xi$ is differentiable at every point and%
\[
D\left(  K\ast\xi\right)  _{i}\left(  x\right)  \cdot h=\xi\left(  DK_{i\cdot
}\left(  x\right)  \cdot h\right)  .
\]
The arguments now are similar to those already exposed above and iterate: this
first derivatives are continuous bounded functions and $B:\mathcal{M}%
_{w}\rightarrow C_{b}^{1}\left(  \mathbb{R}^{d},\mathbb{R}^{d}\right)  $ is continuous.

Iterating again, based on the uniform continuity of $D^{2}K$ and the property
$K\in C_{b}^{3}$, we get that $B:\mathcal{M}_{w}\rightarrow C_{b}^{2}\left(
\mathbb{R}^{d},\mathbb{R}^{d}\right)  $ is well defined and continous.
Property (\ref{B1}) comes from (\ref{uniform ineq in weak norm}) and the
similar inequalities for first and second derivatives; the last one requires
$K$ of class $C_{b}^{3}$. Property (\ref{B2}) follows from
(\ref{uniform ineq in weak norm}). Finally, property (\ref{B3}) is proved
similarly, using the analogous bound on the second derivative.
\end{proof}

\subsection{Properties of the flow\label{section the flow}}

For any $b\in C\left(  \left[  0,T\right]  ;C_{b}^{2}\left(  \mathbb{R}%
^{d},\mathbb{R}^{d}\right)  \right)  $, consider the ODE in $\mathbb{R}^{d}$
\begin{equation}
X_{t}^{\prime}=b\left(  t,X_{t}\right)  \label{ode}%
\end{equation}
and denote by $\varphi^{t,b}\left(  x\right)  $ the associated flow. It is
differentiable and%
\begin{equation}
\frac{d}{dt}D\varphi^{t,b}\left(  x\right)  =Db\left(  t,\varphi^{t,b}\left(
x\right)  \right)  D\varphi^{t,b}\left(  x\right)  ,\qquad D\varphi
^{0,b}\left(  x\right)  =Id. \label{diff}%
\end{equation}
In the sequel we shall denote by $B\left(  \xi\right)  $ also the function
$t\mapsto B\left(  \xi_{t}\right)  $. Moreover, we write%
\[
\left\Vert \xi\right\Vert _{T}:=\sup_{0\leq t\leq T}\left\Vert \xi
_{T}\right\Vert .
\]
The computations in the proof of the following lemma are classical; however,
it is important for Theorem \ref{thm maximal} below that we carefully
make the estimates (\ref{Lip-phi}) and (\ref{Lip-Dphi}) depend only on one of
the two current-valued processes, say $\xi$;\ this asymmetric dependence is
less obvious, although common to other problems like the theorems of
weak-strong uniqueness.

\begin{lemma}
\label{flow} If $\xi\in C\left(  \left[  0,T\right]  ;\mathcal{M}_{w}\right)
$, then the flow $\varphi^{t,B(\xi)}:\mathbb{R}^{d}\rightarrow\mathbb{R}^{d}$
is twice differentiable and satisfies, for all $t\in\left[  0,T\right]  $,
\begin{equation}
\left\Vert D\varphi^{t,B\left(  \xi\right)  }\right\Vert _{\infty}\leq
e^{C_{B}T(\left\Vert \xi\right\Vert _{T}+1)} \label{Dphi}%
\end{equation}

\begin{equation}
\left\Vert \varphi^{t,B\left(  \xi\right)  }-\varphi^{t,B\left(  \xi^{\prime
}\right)  }\right\Vert _{\infty}\leq C_{B}Te^{C_{B}T(\left\Vert \xi\right\Vert
_{T}+1)}\left\Vert \xi-\xi^{\prime}\right\Vert _{T} \label{Lip-phi}%
\end{equation}
and for every $x\in\mathbb{R}^{d}$%
\[
\left\Vert D\varphi^{t,B\left(  \xi\right)  }-D\varphi^{t,B\left(  \xi
^{\prime}\right)  }\right\Vert _{\infty}%
\]%
\begin{equation}
\leq C_{B}Te^{C_{B}\left(  T+1\right)  (\left\Vert \xi\right\Vert
_{T}+\left\Vert \xi^{\prime}\right\Vert _{T}+2)}\left(  1+C_{B}T(\left\Vert
\xi\right\Vert _{T}+1)e^{C_{B}T(\left\Vert \xi\right\Vert _{T}+1)}\right)
\Vert\xi-\xi^{\prime}\Vert_{T}. \label{Lip-Dphi}%
\end{equation}

Moreover, for every $x,y\in\mathbb{R}^{d}$,
\begin{equation}
|D\varphi^{t,B\left(  \xi\right)  }\left(  x\right)  -D\varphi^{t,B\left(
\xi\right)  }\left(  y\right)  |\leq C_{B}T\left(  \left\Vert \xi\right\Vert
_{T}+1\right)  e^{C_{B}\left(  2T+1\right)  \left(  \left\Vert \xi\right\Vert
_{T}+1\right)  }\left\vert x-y\right\vert . \label{Dphi_Lipschitz}%
\end{equation}

\end{lemma}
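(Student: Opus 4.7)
The plan is to derive all four estimates from Gronwall-type inequalities applied to the ODE (\ref{ode}) and its linearization (\ref{diff}), exploiting the bounds (\ref{B1})--(\ref{B3}) on the drift. Existence, uniqueness and twice differentiability of $\varphi^{t,B(\xi)}$ themselves follow from classical ODE theory, since the hypothesis $\xi\in C([0,T];\mathcal{M}_w)$ combined with the continuity $B:\mathcal{M}_w\to C_b^2$ yields $t\mapsto B(\xi_t)\in C([0,T];C_b^2(\mathbb{R}^d,\mathbb{R}^d))$. Inequality (\ref{Dphi}) is then immediate from (\ref{diff}): using $\|DB(\xi_t)\|_\infty\leq \|B(\xi_t)\|_{C_b^2}\leq C_B(\|\xi\|_T+1)$ by (\ref{B1}), Gronwall on $\|D\varphi^{t,B(\xi)}(x)\|$ gives the claimed exponential bound uniformly in $x$.

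For (\ref{Lip-phi}) I would write
\[
\varphi^{t,B(\xi)}(x)-\varphi^{t,B(\xi')}(x)=\int_0^t\bigl[B(\xi_s)(\varphi^{s,B(\xi)}(x))-B(\xi'_s)(\varphi^{s,B(\xi')}(x))\bigr]ds
\]
and split the integrand by adding and subtracting $B(\xi_s)(\varphi^{s,B(\xi')}(x))$ --- this is the crucial asymmetric choice. The first piece is bounded by $\|DB(\xi_s)\|_\infty\,|\varphi^{s,B(\xi)}(x)-\varphi^{s,B(\xi')}(x)|$, controlled via (\ref{B1}) by $C_B(\|\xi\|_T+1)$; the second piece is $\leq C_B\|\xi_s-\xi'_s\|$ by (\ref{B2}). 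Gronwall in $u(t):=\|\varphi^{t,B(\xi)}-\varphi^{t,B(\xi')}\|_\infty$ with coefficient depending only on $\|\xi\|_T$ then closes the bound.

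For (\ref{Lip-Dphi}) I set $\Psi(t):=D\varphi^{t,B(\xi)}(x)-D\varphi^{t,B(\xi')}(x)$ and use (\ref{diff}) to obtain the linear ODE
\[
\Psi'(t)=DB(\xi_t)(\varphi^{t,B(\xi)}(x))\,\Psi(t)+R(t,x),
\]
with forcing $R(t,x)=\bigl[DB(\xi_t)(\varphi^{t,B(\xi)}(x))-DB(\xi'_t)(\varphi^{t,B(\xi')}(x))\bigr]D\varphi^{t,B(\xi')}(x)$. I split $R$ through the intermediate point $\varphi^{t,B(\xi')}(x)$: the spatial increment of $DB(\xi_t)$ is estimated by $\|D^2B(\xi_t)\|_\infty\leq C_B(\|\xi\|_T+1)$ times (\ref{Lip-phi}), and the $\xi$-increment by (\ref{B3}); the multiplicative factor $D\varphi^{t,B(\xi')}$ contributes $\|\xi'\|_T$ in the exponent via (\ref{Dphi}) applied to $\xi'$, which is why this estimate is genuinely two-sided. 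The factor $1+C_BT(\|\xi\|_T+1)e^{C_BT(\|\xi\|_T+1)}$ arises from combining (\ref{Lip-phi}) with (\ref{Dphi}). Finally, (\ref{Dphi_Lipschitz}) follows from the spatial analogue of the same decomposition: expand $D\varphi^{t,B(\xi)}(x)-D\varphi^{t,B(\xi)}(y)$ using (\ref{diff}), add and subtract $DB(\xi_s)(\varphi^{s,B(\xi)}(y))D\varphi^{s,B(\xi)}(x)$, bound the spatial increment of $DB(\xi_s)$ by $\|D^2B\|_\infty\leq C_B(\|\xi\|_T+1)$ times (\ref{Dphi}) applied to $|x-y|$, and close with Gronwall.

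The main obstacle is the asymmetric decomposition in (\ref{Lip-phi}): the naive symmetric split $B(\xi_s)(\varphi^s)-B(\xi'_s)(\varphi^s)+B(\xi'_s)(\varphi^s)-B(\xi'_s)(\varphi'^s)$ would force $\|DB(\xi'_s)\|_\infty$, hence $\|\xi'\|_T$, into the Gronwall coefficient, destroying the one-sided structure that the authors flag as essential for Theorem \ref{thm maximal}. Evaluating instead the \emph{unperturbed} drift $B(\xi_s)$ at the intermediate point $\varphi^{s,B(\xi')}(x)$ keeps the Gronwall coefficient purely in terms of $\|\xi\|_T$, at the price of needing (\ref{B2}) for the transverse term.
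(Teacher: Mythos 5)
Your proposal is correct and follows essentially the same route as the paper: the same linearized ODE (\ref{diff}) for $D\varphi$, the same asymmetric splitting through the intermediate point $\varphi^{s,B(\xi')}(x)$ evaluated under the unperturbed drift $B(\xi_s)$ for (\ref{Lip-phi}), the same two-stage decomposition of the forcing term for (\ref{Lip-Dphi}), and the same add-and-subtract pattern plus Gronwall for (\ref{Dphi_Lipschitz}). You also correctly identify that the asymmetric splitting is what keeps the Gronwall coefficient depending only on $\|\xi\|_T$, which is exactly the feature the authors flag as essential for Theorem \ref{thm maximal}.
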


\begin{proof}
We have, from $\frac{d}{dt}D\varphi^{t,B\left(  \xi\right)  }\left(  x\right)
=DB\left(  \xi_{t}\right)  \left(  \varphi^{t,B\left(  \xi\right)  }\left(
x\right)  \right)  D\varphi^{t,B\left(  \xi\right)  }\left(  x\right)  $,
\[
\left\vert D\varphi^{t,B\left(  \xi\right)  }\left(  x\right)  \right\vert
\leq e^{\int_{0}^{t}\left\vert DB\left(  \xi_{s}\right)  \left(
\varphi^{s,B\left(  \xi\right)  }\left(  x\right)  \right)  \right\vert ds}%
\]%
\[
\left\Vert D\varphi^{t,B\left(  \xi\right)  }\right\Vert _{\infty}\leq
e^{\int_{0}^{t}\left\Vert DB\left(  \xi_{s}\right)  \right\Vert _{\infty}ds}.
\]
Now, using the assumption (\ref{B1}) on $B$ we get (\ref{Dphi}).

For the estimate (\ref{Lip-phi}), notice that
\begin{align*}
\frac{d}{dt}\left(  \varphi^{t,B\left(  \xi\right)  }\left(  x\right)
-\varphi^{t,B\left(  \xi^{\prime}\right)  }\left(  x\right)  \right)   &
=B\left(  \xi_{t}\right)  \left(  \varphi^{t,B\left(  \xi\right)  }\left(
x\right)  \right)  -B\left(  \xi_{t}^{\prime}\right)  \left(  \varphi
^{t,B\left(  \xi^{\prime}\right)  }\left(  x\right)  \right) \\
&  =B\left(  \xi_{t}\right)  \left(  \varphi^{t,B\left(  \xi\right)  }\left(
x\right)  \right)  -B\left(  \xi_{t}\right)  \left(  \varphi^{t,B\left(
\xi^{\prime}\right)  }\left(  x\right)  \right) \\
&  +B\left(  \xi_{t}\right)  \left(  \varphi^{t,B\left(  \xi^{\prime}\right)
}\left(  x\right)  \right)  -B\left(  \xi_{t}^{\prime}\right)  \left(
\varphi^{t,B\left(  \xi^{\prime}\right)  }\left(  x\right)  \right)
\end{align*}
hence%
\begin{align*}
\left\vert \varphi^{t,B\left(  \xi\right)  }\left(  x\right)  -\varphi
^{t,B\left(  \xi^{\prime}\right)  }\left(  x\right)  \right\vert  &  \leq
\int_{0}^{t}\left\Vert DB\left(  \xi_{s}\right)  \right\Vert _{\infty
}\left\vert \varphi^{s,B\left(  \xi\right)  }\left(  x\right)  -\varphi
^{s,B\left(  \xi^{\prime}\right)  }\left(  x\right)  \right\vert ds\\
&  +\int_{0}^{t}\left\Vert B\left(  \xi_{s}\right)  -B\left(  \xi_{s}^{\prime
}\right)  \right\Vert _{\infty}ds.
\end{align*}
Hence, using Gronwall's lemma we get that%
\[
\left\vert \varphi^{t,B\left(  \xi\right)  }\left(  x\right)  -\varphi
^{t,B\left(  \xi^{\prime}\right)  }\left(  x\right)  \right\vert \leq\int%
_{0}^{t}\left\Vert B\left(  \xi_{s}\right)  -B\left(  \xi_{s}^{\prime}\right)
\right\Vert _{\infty}e^{\int_{s}^{t}\left\Vert DB\left(  \xi_{r}\right)
\right\Vert _{\infty}dr}ds.
\]
Now, using again assumptions (\ref{B1}) and (\ref{B2}), we deduce
(\ref{Lip-phi}).

Now, let us prove (\ref{Lip-Dphi}). Let us notice that
\begin{align*}
&  \frac{d}{dt}\left(  D\varphi^{t,B\left(  \xi\right)  }\left(  x\right)
-D\varphi^{t,B\left(  \xi^{\prime}\right)  }\left(  x\right)  \right) \\
&  =DB\left(  \xi_{t}\right)  \left(  \varphi^{t,B\left(  \xi\right)  }\left(
x\right)  \right)  D\varphi^{t,B\left(  \xi\right)  }\left(  x\right)
-DB\left(  \xi_{t}^{\prime}\right)  \left(  \varphi^{t,B\left(  \xi^{\prime
}\right)  }\left(  x\right)  \right)  D\varphi^{t,B\left(  \xi^{\prime
}\right)  }\left(  x\right) \\
&  =DB\left(  \xi_{t}\right)  \left(  \varphi^{t,B\left(  \xi\right)  }\left(
x\right)  \right)  D\varphi^{t,B\left(  \xi\right)  }\left(  x\right)
-DB\left(  \xi_{t}\right)  \left(  \varphi^{t,B\left(  \xi\right)  }\left(
x\right)  \right)  D\varphi^{t,B\left(  \xi^{\prime}\right)  }\left(  x\right)
\\
&  +DB\left(  \xi_{t}\right)  \left(  \varphi^{t,B\left(  \xi\right)  }\left(
x\right)  \right)  D\varphi^{t,B\left(  \xi^{\prime}\right)  }\left(
x\right)  -DB\left(  \xi_{t}^{\prime}\right)  \left(  \varphi^{t,B\left(
\xi^{\prime}\right)  }\left(  x\right)  \right)  D\varphi^{t,B\left(
\xi^{\prime}\right)  }\left(  x\right)  .
\end{align*}

For the first term,%
\begin{align*}
&  \left\vert DB\left(  \xi_{t}\right)  \left(  \varphi^{t,B\left(
\xi\right)  }\left(  x\right)  \right)  D\varphi^{t,B\left(  \xi\right)
}\left(  x\right)  -DB\left(  \xi_{t}\right)  \left(  \varphi^{t,B\left(
\xi\right)  }\left(  x\right)  \right)  D\varphi^{t,B\left(  \xi^{\prime
}\right)  }\left(  x\right)  \right\vert \\
&  \leq\left\Vert DB\left(  \xi_{t}\right)  \right\Vert _{\infty}\left\vert
D\varphi^{t,B\left(  \xi\right)  }\left(  x\right)  -D\varphi^{t,B\left(
\xi^{\prime}\right)  }\left(  x\right)  \right\vert .
\end{align*}

For the second term,%
\begin{align*}
&  \left\vert DB(\xi_{t})\left(  \varphi^{t,B(\xi)}(x)\right)  D\varphi
^{t,B(\xi^{\prime})}\left(  x\right)  -DB(\xi_{t}^{\prime})\left(
\varphi^{t,B(\xi^{\prime})}(x)\right)  D\varphi^{t,B(\xi^{\prime})}\left(
x\right)  \right\vert \\
&  \leq\left\vert DB(\xi_{t})\left(  \varphi^{t,B(\xi)}(x)\right)  -DB(\xi
_{t}^{\prime})\left(  \varphi^{t,B(\xi^{\prime})}(x)\right)  \right\vert
\left\vert D\varphi^{t,B(\xi^{\prime})}\left(  x\right)  \right\vert \\
&  \leq\left\vert DB(\xi_{t})\left(  \varphi^{t,B(\xi)}(x)\right)  -DB(\xi
_{t})\left(  \varphi^{t,B(\xi^{\prime})}(x)\right)  \right\vert \left\vert
D\varphi^{t,B(\xi^{\prime})}\left(  x\right)  \right\vert \\
&  +\left\vert DB(\xi_{t})\left(  \varphi^{t,B(\xi^{\prime})}(x)\right)
-DB(\xi_{t}^{\prime})\left(  \varphi^{t,B(\xi^{\prime})}(x)\right)
\right\vert \left\vert D\varphi^{t,B(\xi^{\prime})}\left(  x\right)
\right\vert \\
&  \leq\left\Vert D^{2}B(\xi_{t})\right\Vert _{\infty}\left\vert
\varphi^{t,B(\xi)}(x)-\varphi^{t,B(\xi^{\prime})}(x)\right\vert \left\vert
D\varphi^{t,B(\xi^{\prime})}\left(  x\right)  \right\vert \\
&  +\left\Vert DB(\xi_{t})-DB(\xi_{t}^{\prime})\right\Vert _{\infty}\left\vert
D\varphi^{t,B(\xi^{\prime})}\left(  x\right)  \right\vert .
\end{align*}
Hence, using assumption (\ref{B1}), (\ref{B3}) and the estimates (\ref{Dphi})
and (\ref{Lip-phi}), we get that%
\begin{align*}
\left\vert D\varphi^{t,B\left(  \xi\right)  }\left(  x\right)  -D\varphi
^{t,B\left(  \xi^{\prime}\right)  }\left(  x\right)  \right\vert  &  \leq
C_{B}(\left\Vert \xi\right\Vert _{T}+1)\int_{0}^{t}\left\vert D\varphi
^{s,B\left(  \xi\right)  }\left(  x\right)  -D\varphi^{s,B\left(  \xi^{\prime
}\right)  }\left(  x\right)  \right\vert ds\\
&  +C_{B}Te^{C_{B}T(\left\Vert \xi^{\prime}\right\Vert _{T}+1)}\left(
1+C_{B}T(\left\Vert \xi\right\Vert _{T}+1)e^{C_{B}T(\left\Vert \xi\right\Vert
_{T}+1)}\right)  \Vert\xi-\xi^{\prime}\Vert_{T}%
\end{align*}
which implies, by Gronwall's lemma,
\[
\left\vert D\varphi^{t,B\left(  \xi\right)  }\left(  x\right)  -D\varphi
^{t,B\left(  \xi^{\prime}\right)  }\left(  x\right)  \right\vert \leq
e^{T C_{B}(\left\Vert \xi\right\Vert _{T}+1)}C_{B}Te^{C_{B}T(\left\Vert
\xi^{\prime}\right\Vert _{T}+1)}\left(  1+C_{B}T(\left\Vert \xi\right\Vert
_{T}+1)e^{C_{B}T(\left\Vert \xi\right\Vert _{T}+1)}\right)  \Vert\xi
-\xi^{\prime}\Vert_{T}.
\]
It is left to prove (\ref{Dphi_Lipschitz}).
\begin{align*}
|D\varphi^{t,B\left(  \xi\right)  }\left(  x\right)   &  -D\varphi^{t,B\left(
\xi\right)  }\left(  y\right)  |\\
\leq &  \int_{0}^{t}\left\vert DB(\xi_{s})(\varphi^{s,B(\xi)}(x))D\varphi
^{t,B(\xi)}(x)-DB(\xi_{s})(\varphi^{s,B(\xi)}(y))D\varphi^{t,B(\xi)}\left(
y\right)  \right\vert ds\\
\leq &  \int_{0}^{t}\left\vert DB(\xi_{s})(\varphi^{s,B(\xi)}(x))D\varphi
^{s,B(\xi)}(x)-DB(\xi_{s})(\varphi^{s,B(\xi)}(x))D\varphi^{s,B(\xi)}\left(
y\right)  \right\vert \\
&  +\left\vert DB(\xi_{s})(\varphi^{s,B(\xi)}(x))D\varphi^{s,B(\xi)}%
(y)-DB(\xi_{s})(\varphi^{s,B(\xi)}(y))D\varphi^{s,B(\xi)}\left(  y\right)
\right\vert ds\\
\leq &  \sup_{s\in\lbrack0,t]}\Vert DB(\xi_{s})\Vert_{\infty}\int_{0}%
^{t}\left\vert D\varphi^{s,B\left(  \xi\right)  }\left(  x\right)
-D\varphi^{s,B\left(  \xi\right)  }\left(  y\right)  \right\vert ds\\
&  +t\sup_{s\in\lbrack0,t]}\left(  \left\Vert D\varphi^{s,B(\xi)}\right\Vert
_{\infty}^{2}\left\Vert D^{2}B(\xi_{s})\right\Vert _{\infty}\right)
\left\vert x-y\right\vert .
\end{align*}
We now apply Gronwall's Lemma and we get
\[
|D\varphi^{t,B\left(  \xi\right)  }\left(  x\right)  -D\varphi^{t,B\left(
\xi\right)  }\left(  y\right)  |\leq T\sup_{s\in\lbrack0,T]}\left(  \left\Vert
D\varphi^{s,B(\xi)}\right\Vert _{\infty}^{2}\left\Vert D^{2}B(\xi
_{s})\right\Vert _{\infty}\right)  e^{T\sup_{s\in\lbrack0,T]}\Vert DB(\xi
_{s})\Vert_{\infty}}\left\vert x-y\right\vert .
\]
Now, using (\ref{flow}) and (\ref{B1}) we get (\ref{Dphi_Lipschitz}).
\end{proof}

\subsection{Well posedness of the flow equation}

We are now ready to consider the closed loop $\xi\mapsto\varphi^{\cdot
,B\left(  \xi\right)  }\mapsto\xi_{t}:=\varphi_{\sharp}^{t,B\left(
\xi\right)  }\xi_{0}$, namely the equation:
\begin{equation}
\xi_{t}=\varphi_{\sharp}^{t,B\left(  \xi\right)  }\xi_{0},\qquad t\in\left[
0,T\right]  . \label{xi}%
\end{equation}
Let us prove it has a unique solution in the space $C([0,T];\mathcal{M})$ by
using a fixed point argument. Indeed, let $\xi_{0}\in\mathcal{M}$ be the
initial current, at time $t=0$. Given $\xi=\left(  \xi_{t}\right)
_{t\in\left[  0,T\right]  }\in C\left(  [0,T];\mathcal{M}_{w}\right)  $, let
$\Gamma\left(  \xi\right)  =\eta=\left(  \eta_{t}\right)  _{t\in\left[
0,T\right]  }$ be the time-dependent current defined as%
\begin{equation}
\eta_{t}=\varphi_{\sharp}^{t,B\left(  \xi\right)  }\xi_{0},\qquad t\in\left[
0,T\right]  . \label{eta}%
\end{equation}

\begin{lemma}
\label{lemma bounded in bounded}Given $\xi_{0}\in\mathcal{M}$, set
$R=2\left\vert \xi_{0}\right\vert _{\mathcal{M}}$. Then there exists
$T_{R}^{0}>0$, depending only on $R$, such that $\Gamma\left(  B_{R}\right)
\subset B_{R}$, where $B_{R}$ is the set of all $\xi=\left(  \xi_{t}\right)
_{t\in\left[  0,T\right]  }\in C\left(  0,T;\mathcal{M}\right)  $ such that
$\sup_{t\in\lbrack0,T]}\left\vert \xi_{t}\right\vert _{\mathcal{M}}\leq R$.
Similarly if $B_{R}$ is defined by the norm $C\left(  [0,T];\mathcal{M}%
_{w}\right)  $, where $\mathcal{M}_{w}$ is endowed with the norm $\left\Vert
\cdot\right\Vert $.
\end{lemma}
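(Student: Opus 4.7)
The plan is to unpack the push-forward by duality, bound the result using the estimate on $D\varphi^{t,B(\xi)}$ from Lemma \ref{flow}, and then choose $T_R^0$ small enough that the resulting exponential is absorbed by the factor $2$ built into $R = 2|\xi_0|_{\mathcal{M}}$.

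Concretely, for any $\xi\in B_R$ and any $\theta\in C_b(\mathbb{R}^d;\mathbb{R}^d)$ with $\|\theta\|_\infty\leq 1$, the definition of push-forward gives
\[
\eta_t(\theta) \;=\; \xi_0\bigl(\varphi^{t,B(\xi)}_{\sharp}\theta\bigr), \qquad \varphi^{t,B(\xi)}_{\sharp}\theta(x) \;=\; D\varphi^{t,B(\xi)}(x)^{T}\,\theta\bigl(\varphi^{t,B(\xi)}(x)\bigr),
\]
so $\|\varphi^{t,B(\xi)}_{\sharp}\theta\|_\infty \leq \|D\varphi^{t,B(\xi)}\|_\infty$ and therefore
\[
|\eta_t|_{\mathcal{M}} \;\leq\; |\xi_0|_{\mathcal{M}}\,\|D\varphi^{t,B(\xi)}\|_\infty.
\]
Because $\|\xi_t\|\leq|\xi_t|_{\mathcal{M}}\leq R$ we have $\|\xi\|_T\leq R$, and estimate (\ref{Dphi}) of Lemma \ref{flow} yields $\|D\varphi^{t,B(\xi)}\|_\infty\leq e^{C_B T(R+1)}$. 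Since $|\xi_0|_{\mathcal{M}}=R/2$, I obtain
\[
\sup_{t\in[0,T]}|\eta_t|_{\mathcal{M}} \;\leq\; \tfrac{R}{2}\,e^{C_B T(R+1)}.
\]

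The proof is concluded by taking $T_R^0>0$ so small that $e^{C_B T_R^0(R+1)}\leq 2$, e.g.\ $T_R^0=\log 2/(C_B(R+1))$, which indeed depends only on $R$. For the variant where $B_R$ is defined via the weak norm (so $\sup_t\|\xi_t\|\leq R$), the same argument applies verbatim, since only $\|\xi\|_T$ enters (\ref{Dphi}); one concludes using $\|\eta_t\|\leq|\eta_t|_{\mathcal{M}}$.

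The only remaining task is to check that $t\mapsto\eta_t$ is continuous into the appropriate topology so that $\eta\in C([0,T];\mathcal{M})$. I expect this to be the main subtlety: strong continuity in $|\cdot|_{\mathcal{M}}$ fails in general for push-forwards acting on merely bounded continuous test functions (there is no equicontinuity of the unit ball of $C_b$), so the statement must be read with weak continuity, which is in any case exactly what the fixed-point argument needs. This is verified by estimating
\[
\|\eta_t-\eta_s\| \;\leq\; |\xi_0|_{\mathcal{M}}\sup_{\|\theta\|_\infty+\mathrm{Lip}(\theta)\leq 1}\bigl\|\varphi^{t,B(\xi)}_{\sharp}\theta-\varphi^{s,B(\xi)}_{\sharp}\theta\bigr\|_\infty,
\]
splitting the integrand into a $D\varphi$-difference (continuous in $t$ by (\ref{diff})) and a $\theta\circ\varphi$-difference (controlled by $\mathrm{Lip}(\theta)\leq 1$ times $\|\varphi^{t,B(\xi)}-\varphi^{s,B(\xi)}\|_\infty$, which is $O(|t-s|)$ by (\ref{B1}) and Lemma \ref{flow}).
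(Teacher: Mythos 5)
Your computation is identical to the paper's: bound $|\eta_t|_{\mathcal M}\le|\xi_0|_{\mathcal M}\|D\varphi^{t,B(\xi)}\|_\infty$, apply (\ref{Dphi}) together with $\|\xi\|_T\le R$ and $|\xi_0|_{\mathcal M}=R/2$, and pick $T_R^0$ so that $e^{C_BT_R^0(R+1)}\le 2$. Your extra remark that one should also verify continuity of $t\mapsto\eta_t$ — and that strong $|\cdot|_{\mathcal M}$-continuity is genuinely problematic for merely $C_b$ test functions, while the Lipschitz (weak) version goes through by the splitting you sketch — is a real subtlety the paper's proof passes over silently, and it is precisely the weak-topology version that the contraction argument in Theorem~\ref{Thm existence and uniqueness} actually uses.
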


\begin{proof}
First we prove the first statement. To do it, we must estimate the strong norm
of $\eta=\Gamma\left(  \xi\right)  $:%
\begin{align*}
\left\vert \varphi_{\sharp}^{t,B\left(  \xi\right)  }\xi_{0}\left(
\theta\right)  \right\vert  &  =\left\vert \xi_{0}\left(  \varphi_{\sharp
}^{t,B\left(  \xi\right)  }\theta\right)  \right\vert \leq\left\vert \xi
_{0}\right\vert _{\mathcal{M}}\left\Vert \varphi_{\sharp}^{t,B\left(
\xi\right)  }\theta\right\Vert _{\infty}=\left\vert \xi_{0}\right\vert
_{\mathcal{M}}\left\Vert D\varphi^{t,B\left(  \xi\right)  }\left(
\cdot\right)  ^{T}\theta\left(  \varphi^{t,B\left(  \xi\right)  }\left(
\cdot\right)  \right)  \right\Vert _{\infty}\\
&  \leq \left\vert \xi_{0}\right\vert _{\mathcal{M}} \left\Vert D\varphi^{t,B\left(  \xi\right)  }\right\Vert _{\infty
}\left\Vert \theta\left(  \varphi^{t,B\left(  \xi\right)  }\left(
\cdot\right)  \right)  \right\Vert _{\infty}\leq 
\left\vert \xi_{0}\right\vert _{\mathcal{M}} \left\Vert D\varphi
^{t,B\left(  \xi\right)  }\right\Vert _{\infty}\left\Vert \theta\right\Vert
_{\infty}%
\end{align*}
which implies that%
\begin{equation}
\sup_{t\in\left[  0,T\right]  }\left\vert \eta_{t}\right\vert _{\mathcal{M}%
}\leq\left\vert \xi_{0}\right\vert _{\mathcal{M}}\sup_{t\in\left[  0,T\right]
}\left\Vert D\varphi^{t,B\left(  \xi\right)  }\right\Vert _{\infty}.
\label{estimate strong topology}%
\end{equation}
Using (\ref{Dphi}) and $\Vert\xi\Vert\leq|\xi|_{\mathcal{M}}\leq R$, we get
that
\[
\sup_{t\in\left[  0,T\right]  }\left\vert \eta_{t}\right\vert _{\mathcal{M}%
}\leq\frac{R}{2}e^{C_{B}(R+1)T}.
\]
If $T$ satisfies $e^{C_{B}(R+1)T}\leq2$, we get $\Gamma\left(  B_{R}\right)
\subset B_{R}$ and the proof is complete.

To prove the second statement we first see, from the definition of the norm
$\Vert\cdot\Vert$, that it holds
\[
\sup_{t\in\left[  0,T\right]  }\left\Vert \eta_{t}\right\Vert \leq\sup
_{t\in\left[  0,T\right]  }\left\Vert \varphi_{\sharp}^{t,B\left(  \xi\right)
}\xi_{0}\right\Vert =\sup_{t\in\lbrack0,T]}\sup\{\xi_{0}(\varphi_{\sharp
}^{t,B\left(  \xi\right)  }\theta)\;|\;\Vert\theta\Vert_{\infty}%
+Lip(\theta)\leq1\}
\]
Now, proceeding as in the previous part, we estimate $|\xi_{0}(\varphi
_{\#}^{t,B(\xi)}\theta)|$ and the prove follows in the same way.
\end{proof}

\begin{theorem}
\label{Thm existence and uniqueness}For every $\xi_{0}\in\mathcal{M}$, there
is $T_{R}>0$, depending only on $\left\vert \xi_{0}\right\vert _{\mathcal{M}}%
$, such that there exists a unique solution $\xi$ of the flow equation
(\ref{xi}) in $C\left(  \left[  0,T_{R}\right]  ;\mathcal{M}\right)  $.
\end{theorem}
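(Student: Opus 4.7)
The plan is to run a Banach fixed point argument for the map $\Gamma$ defined in (\ref{eta}) on the ball $B_R$ (with $R = 2|\xi_0|_\mathcal{M}$) equipped with the weak metric $d(\xi,\xi') = \sup_{t\in[0,T]}\|\xi_t - \xi'_t\|$. Two ingredients are already in place: Lemma \ref{lemma complete metric space} tells us that $(B_R,d)$ is a complete metric space (the closed ball in the strong norm is complete in the weak metric), and Lemma \ref{lemma bounded in bounded} guarantees $\Gamma(B_R) \subset B_R$ provided $T \leq T_R^0$. What remains is to show that, after possibly shrinking $T$ further to some $T_R \leq T_R^0$, the map $\Gamma$ is a strict contraction on $(B_R,d)$.

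For the contraction estimate I would take $\xi,\xi' \in B_R$, set $\eta = \Gamma(\xi)$, $\eta' = \Gamma(\xi')$ and, for any test field $\theta$ with $\|\theta\|_\infty + \mathrm{Lip}(\theta) \leq 1$, write
\[
(\eta_t - \eta'_t)(\theta) = \xi_0\bigl(\varphi^{t,B(\xi)}_\sharp \theta - \varphi^{t,B(\xi')}_\sharp \theta\bigr),
\]
so that $|(\eta_t-\eta'_t)(\theta)| \leq |\xi_0|_\mathcal{M} \,\|\varphi^{t,B(\xi)}_\sharp \theta - \varphi^{t,B(\xi')}_\sharp \theta\|_\infty$. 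Using the definition $\varphi_\sharp \theta(x) = D\varphi(x)^T\theta(\varphi(x))$, I would split the difference via the telescoping
\[
\bigl[D\varphi^{t,B(\xi)}(x)^T - D\varphi^{t,B(\xi')}(x)^T\bigr]\theta(\varphi^{t,B(\xi)}(x)) + D\varphi^{t,B(\xi')}(x)^T\bigl[\theta(\varphi^{t,B(\xi)}(x)) - \theta(\varphi^{t,B(\xi')}(x))\bigr].
\]
The first piece is bounded by $\|D\varphi^{t,B(\xi)} - D\varphi^{t,B(\xi')}\|_\infty \, \|\theta\|_\infty$ and controlled by (\ref{Lip-Dphi}); the second is bounded by $\|D\varphi^{t,B(\xi')}\|_\infty\,\mathrm{Lip}(\theta)\,\|\varphi^{t,B(\xi)} - \varphi^{t,B(\xi')}\|_\infty$ and controlled by (\ref{Dphi}) together with (\ref{Lip-phi}). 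Both resulting constants carry an overall factor $C_B T$ (with the remaining exponential factors depending only on $R$, since $\|\xi\|_T, \|\xi'\|_T \leq R$), so taking the sup over $\theta$ gives
\[
d(\eta,\eta') \leq \Lambda(T,R)\, d(\xi,\xi'), \qquad \Lambda(T,R) \xrightarrow[T\to 0]{} 0.
\]
Choosing $T_R \leq T_R^0$ small enough that $\Lambda(T_R,R) < 1$, Banach's fixed point theorem produces a unique $\xi \in B_R$ with $\xi = \Gamma(\xi)$. This $\xi$ lies in $C([0,T_R];\mathcal{M}_w)$ a priori; its continuity in the strong norm $|\cdot|_\mathcal{M}$ follows from estimate (\ref{estimate strong topology}) applied on subintervals together with the continuity of $t\mapsto \varphi^{t,B(\xi)}$.

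To promote uniqueness from $B_R$ to the full space $C([0,T_R];\mathcal{M})$, I would observe that any solution $\tilde\xi$ of (\ref{xi}) with the given $\xi_0$ satisfies, by (\ref{estimate strong topology}) and (\ref{Dphi}), $|\tilde\xi_t|_\mathcal{M} \leq |\xi_0|_\mathcal{M} e^{C_B(1+\|\tilde\xi\|_t)t}$; a standard continuity/Gronwall-type argument shows that, on a sufficiently small time interval $[0,T_R]$, the norm $|\tilde\xi_t|_\mathcal{M}$ cannot exceed $R$, so $\tilde\xi \in B_R$ and the fixed-point uniqueness applies. The main technical obstacle is the contraction step: the estimates in Lemma \ref{flow} were crafted precisely so that the Lipschitz constants of $\varphi^{t,B(\xi)} - \varphi^{t,B(\xi')}$ and of $D\varphi^{t,B(\xi)} - D\varphi^{t,B(\xi')}$ in $\xi$ depend only on one of the two currents (together with $R$); this asymmetric control is what prevents the contraction constant from blowing up when $\xi'$ is merely known to lie in $B_R$, and is the delicate point to exploit correctly in bounding both terms of the telescoping decomposition uniformly in the unit ball of test fields.
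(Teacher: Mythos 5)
Your proposal follows the paper's proof essentially step by step: the same ball $B_R$ with $R=2|\xi_0|_\mathcal{M}$, the same use of Lemma \ref{lemma complete metric space} for completeness and Lemma \ref{lemma bounded in bounded} for invariance, the same telescoping of $\varphi_\sharp^{t,B(\xi)}\theta - \varphi_\sharp^{t,B(\xi')}\theta$ into a ``$D\varphi$-difference'' term and a ``$\theta$-increment'' term, and the same appeal to (\ref{Dphi}), (\ref{Lip-phi}), (\ref{Lip-Dphi}) to extract an overall factor $C_B T$ so the contraction constant shrinks with $T$. (Your telescoping is the mirror image of the paper's, keeping $D\varphi^{t,B(\xi')}$ rather than $D\varphi^{t,B(\xi)}$ on the increment term, but since both $\xi,\xi'\in B_R$ this is immaterial.)

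Two small remarks. First, you go slightly beyond the paper in explicitly arguing that any solution in $C([0,T_R];\mathcal{M})$ must stay in $B_R$ (via (\ref{estimate strong topology}), (\ref{Dphi}) and a continuity bootstrap), which upgrades uniqueness in $B_R$ to uniqueness in the full space as the theorem statement asserts; the paper leaves this step implicit. Second, your closing diagnosis misattributes the role of the asymmetric dependence in (\ref{Lip-phi})--(\ref{Lip-Dphi}): in the contraction step both iterates already lie in $B_R$, so nothing would blow up even with symmetric estimates. As the paper notes explicitly, the asymmetry is engineered for the blow-up criterion of Theorem \ref{thm maximal} (a weak-strong-type argument where only one of the two processes is controlled), not for the fixed-point argument here.
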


\begin{proof}
\textbf{Step 1}. Let $R=2\left\vert \xi_{0}\right\vert _{\mathcal{M}}$ and
$T_{R}^{0}$ be given by the previous lemma;\ let $T\in\left[  0,T_{R}%
^{0}\right]  $ to be chosen below. Let $\xi,\xi^{\prime}\in C\left(
[0,T];\mathcal{M}_{w}\right)  $, $\eta=\Gamma\left(  \xi\right)  ,\eta
^{\prime}=\Gamma\left(  \xi^{\prime}\right)  $. We have, for every Lipschitz
function $\theta$,
\begin{equation}
|(\eta_{t}-\eta_{t}^{\prime})(\theta)|\leq|\xi_{0}|_{\mathcal{M}}\Vert
\varphi_{\sharp}^{t,B\left(  \xi\right)  }\theta-\varphi_{\sharp}^{t,B\left(
\xi^{\prime}\right)  }\theta\Vert_{\infty} \label{new 1}%
\end{equation}
Now%
\begin{align*}
\left\Vert \varphi_{\sharp}^{t,B\left(  \xi\right)  }\theta-\varphi_{\sharp
}^{t,B\left(  \xi^{\prime}\right)  }\theta\right\Vert _{\infty}  &
=\left\Vert D\varphi^{t,B\left(  \xi\right)  }\left(  \cdot\right)  ^{T}%
\theta\left(  \varphi^{t,B\left(  \xi\right)  }\left(  \cdot\right)  \right)
-D\varphi^{t,B\left(  \xi^{\prime}\right)  }\left(  \cdot\right)  ^{T}%
\theta\left(  \varphi^{t,B\left(  \xi^{\prime}\right)  }\left(  \cdot\right)
\right)  \right\Vert _{\infty}\\
&  \leq\left\Vert D\varphi^{t,B\left(  \xi\right)  }\left(  \cdot\right)
^{T}\theta\left(  \varphi^{t,B\left(  \xi\right)  }\left(  \cdot\right)
\right)  -D\varphi^{t,B\left(  \xi\right)  }\left(  \cdot\right)  ^{T}%
\theta\left(  \varphi^{t,B\left(  \xi^{\prime}\right)  }\left(  \cdot\right)
\right)  \right\Vert _{\infty}\\
&  +\left\Vert D\varphi^{t,B\left(  \xi\right)  }\left(  \cdot\right)
^{T}\theta\left(  \varphi^{t,B\left(  \xi^{\prime}\right)  }\left(
\cdot\right)  \right)  -D\varphi^{t,B\left(  \xi^{\prime}\right)  }\left(
\cdot\right)  ^{T}\theta\left(  \varphi^{t,B\left(  \xi^{\prime}\right)
}\left(  \cdot\right)  \right)  \right\Vert _{\infty}\\
&  \leq\left\Vert D\varphi^{t,B\left(  \xi\right)  }\right\Vert _{\infty
}\left\Vert D\theta\right\Vert _{\infty}\left\Vert \varphi^{t,B\left(
\xi\right)  }-\varphi^{t,B\left(  \xi^{\prime}\right)  }\right\Vert _{\infty
}\\
&  +\left\Vert \theta\right\Vert _{\infty}\left\Vert D\varphi^{t,B\left(
\xi\right)  }-D\varphi^{t,B\left(  \xi^{\prime}\right)  }\right\Vert _{\infty
}.
\end{align*}
By definition, $\Vert\eta_{t}-\eta_{t}^{\prime}\Vert$ is less than or equal to
the supremum of (\ref{new 1}), taken over the Lipschitz functions $\theta$
such that $\Vert\theta\Vert_{\infty}+$Lip$(\theta)\leq1$. Hence,%
\begin{align*}
&  \sup_{t\in\left[  0,T\right]  }\left\Vert \eta_{t}-\eta_{t}^{\prime
}\right\Vert \\
&  \leq\left\vert \xi_{0}\right\vert _{\mathcal{M}}\sup_{t\in\left[
0,T\right]  }\left\Vert D\varphi^{t,B\left(  \xi\right)  }\right\Vert
_{\infty}\left\Vert \varphi^{t,B\left(  \xi\right)  }-\varphi^{t,B\left(
\xi^{\prime}\right)  }\right\Vert _{\infty}\\
&  +\left\vert \xi_{0}\right\vert _{\mathcal{M}}\sup_{t\in\left[  0,T\right]
}\left\Vert D\varphi^{t,B\left(  \xi\right)  }-D\varphi^{t,B\left(
\xi^{\prime}\right)  }\right\Vert _{\infty}%
\end{align*}
Hence, using (\ref{Dphi})
\begin{align*}
&  \sup_{t\in\left[  0,T\right]  }\left\Vert \eta_{t}-\eta_{t}^{\prime
}\right\Vert \\
&  \leq e^{C_{B}T(R+1)}\left\vert \xi_{0}\right\vert _{\mathcal{M}}\sup
_{t\in\left[  0,T\right]  }\left\Vert \varphi^{t,B\left(  \xi\right)
}-\varphi^{t,B\left(  \xi^{\prime}\right)  }\right\Vert _{\infty}+\left\vert
\xi_{0}\right\vert _{\mathcal{M}}\sup_{t\in\left[  0,T\right]  }\left\Vert
D\varphi^{t,B\left(  \xi\right)  }-D\varphi^{t,B\left(  \xi^{\prime}\right)
}\right\Vert _{\infty}.
\end{align*}

\textbf{Step 2}. Using the estimates given in lemma (\ref{flow}) and
summarizing
\begin{align*}
&  \sup_{t\in\left[  0,T\right]  }\left\Vert \eta_{t}-\eta_{t}^{\prime
}\right\Vert \\
&  \leq C_{B}Te^{2C_{B}T(R+1)}\left\vert \xi_{0}\right\vert _{\mathcal{M}}%
\sup_{t\in\left[  0,T\right]  }\left\Vert \xi_{t}-\xi_{t}^{\prime}\right\Vert
\\
&  +\left\vert \xi_{0}\right\vert _{\mathcal{M}}C_{B}Te^{C_{B}\left(
3T+2\right)  (R+1)}\left(  1+C_{B}T(R+1)\right)  \sup_{t\in\left[  0,T\right]
}\left\Vert \xi_{t}-\xi_{t}^{\prime}\right\Vert \\
&  \leq C_{B}T\left\vert \xi_{0}\right\vert _{\mathcal{M}}\left(
e^{C_{B}\left(  3T+2\right)  (R+1)}\left(  2+C_{B}T(R+1)\right)  \right)
\sup_{t\in\left[  0,T\right]  }\left\Vert \xi_{t}-\xi_{t}^{\prime}\right\Vert
.
\end{align*}
Therefore, for $T$ small enough, $\Gamma$ is a contraction in $C\left(
\left[  0,T\right]  ;\mathcal{M}_{w}\right)  $.

Recall now lemma \ref{lemma complete metric space}. The space of currents of
class $C\left(  [0,T];\mathcal{M}_{w}\right)  $ with $\sup_{t\in\lbrack
0,T]}\left\vert \xi_{t}\right\vert _{\mathcal{M}}\leq R$ is a complete metric
space, and $\Gamma$ is a contraction in this space, for $T$ small enough. It
follows that there exists a unique fixed point of $\Gamma$ in this space.
Finally, the fixed point is also in $C\left(  [0,T];\mathcal{M}\right)  $
since the output of the push forward is in this space.
\end{proof}

\subsection{Maximal solutions}

Given $\xi_{0}\in\mathcal{M}$, let $\Upsilon_{\xi_{0}}$ be the set of all
$T>0$ such that there exists a unique current-valued solution on $\left[
0,T\right]  $ for the flow equation (\ref{xi}) with initial condition $\xi
_{0}$. Let $T_{\xi_{0}}=\sup\Upsilon_{\xi_{0}}$; on $[0,T_{\xi_{0}})$ a unique
solution $\xi$ is well defined;\ it is called the maximal solution. We have
$\xi\in C([0,T_{\xi_{0}});\mathcal{M})$. An easy fact is:

\begin{lemma}
If $T_{\xi_{0}}<\infty$, then
\[
\lim_{t\rightarrow T_{\xi_{0}}^{-}}\left\vert \xi_{t}\right\vert
_{\mathcal{M}}=+\infty.
\]

\end{lemma}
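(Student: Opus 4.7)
The plan is to argue by contradiction, using the fact that the local existence time $T_R$ given by Theorem \ref{Thm existence and uniqueness} depends only on the strong norm $|\xi_0|_{\mathcal{M}}$ of the initial datum. This is the standard blow-up alternative for ODE-type problems, and the key observation is that the existence time can be \emph{uniformly} controlled on any set of initial data whose strong norms are bounded.

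Suppose for contradiction that $T_{\xi_{0}}<\infty$ but $\lim_{t\to T_{\xi_{0}}^{-}}|\xi_{t}|_{\mathcal{M}}\neq+\infty$. Then there exist $M>0$ and a sequence $t_{n}\nearrow T_{\xi_{0}}$ with $|\xi_{t_{n}}|_{\mathcal{M}}\le M$ for every $n$. Apply Theorem \ref{Thm existence and uniqueness} with initial current $\xi_{t_{n}}\in\mathcal{M}$: it produces a unique solution $\widetilde{\xi}^{(n)}\in C([0,T_{2M}];\mathcal{M})$ of the flow equation with $\widetilde{\xi}^{(n)}_{0}=\xi_{t_{n}}$, where $T_{2M}>0$ depends only on $M$ (since $R=2|\xi_{t_{n}}|_{\mathcal{M}}\le 2M$). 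Define a time-shifted solution $\widehat{\xi}^{(n)}_{t}:=\widetilde{\xi}^{(n)}_{t-t_{n}}$ for $t\in[t_{n},t_{n}+T_{2M}]$; it solves the flow equation on that interval with initial value $\xi_{t_{n}}$ at time $t_{n}$.

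Next, I would concatenate: let $\zeta_{t}:=\xi_{t}$ for $t\in[0,t_{n}]$ and $\zeta_{t}:=\widehat{\xi}^{(n)}_{t}$ for $t\in[t_{n},t_{n}+T_{2M}]$. By uniqueness (applied on the overlap interval $[t_{n},T_{\xi_{0}})$ with initial condition $\xi_{t_{n}}$ at time $t_{n}$), the original maximal solution $\xi$ coincides with $\widehat{\xi}^{(n)}$ on $[t_{n},T_{\xi_{0}})$, so $\zeta$ is an honest continuous $\mathcal{M}$-valued solution on $[0,t_{n}+T_{2M}]$. Choosing $n$ large enough that $T_{\xi_{0}}-t_{n}<T_{2M}$, we obtain $t_{n}+T_{2M}>T_{\xi_{0}}$ and a solution on an interval strictly longer than $[0,T_{\xi_{0}})$, contradicting the definition of $T_{\xi_{0}}=\sup\Upsilon_{\xi_{0}}$.

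The only non-routine point is the concatenation/uniqueness step: one must verify that the flow equation is autonomous enough that a solution starting at time $t_{n}$ from $\xi_{t_{n}}$ can be patched with the solution on $[0,t_{n}]$, and that uniqueness in $C([t_{n},t_{n}+T_{2M}];\mathcal{M})$ (given by Theorem \ref{Thm existence and uniqueness} applied to the translated problem) really forces $\widehat{\xi}^{(n)}$ to agree with $\xi$ on the overlap. Both are immediate from the structure $\xi_{t}=\varphi^{t,B(\xi)}_{\sharp}\xi_{0}$, since the flow $\varphi$ is generated by an ODE whose coefficient $B(\xi_{s})$ depends only on the current at time $s$, giving an evolution semigroup property $\varphi^{t,B(\xi)}=\varphi^{t-s,B(\xi(\cdot+s))}\circ\varphi^{s,B(\xi)}$. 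No substantial technical obstacle remains beyond writing this bookkeeping carefully.
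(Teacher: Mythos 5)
Your proof is correct and takes essentially the same approach as the paper: argue by contradiction, restart the solution from $\xi_{t_n}$ using the fact that the existence time in Theorem~\ref{Thm existence and uniqueness} depends only on $|\xi_{t_n}|_{\mathcal{M}}$, and thereby extend past $T_{\xi_0}$. You spell out the concatenation/uniqueness bookkeeping that the paper leaves implicit, but the idea is identical.
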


\begin{proof}
We prove the claim by contradiction. Assume there is a sequence $t_{n}%
\rightarrow T_{\xi_{0}}^{-}$ and a constant $C>0$ such that $\left\vert
\xi_{t_{n}}\right\vert _{\mathcal{M}}\leq C$ for every $n\in\mathbb{N}$. We
may apply the existence and uniqueness theorem on the time interval $\left[
t_{n},t_{n}+T\right]  $ where $T$ depends only on $C$. Hence a unique solution
exists up to time $t_{n}+T$. For large $n$ this contradicts the definition of
$T_{\xi_{0}}$ , if it is finite.
\end{proof}

 Taking into account that we
only have $\left\Vert \xi\right\Vert \leq\left\vert \xi\right\vert
_{\mathcal{M}}$, we have the following interesting criterion.
\begin{theorem}
\label{thm maximal}If $T_{\xi_{0}}<\infty$, then
\[
\lim_{t\rightarrow T_{\xi_{0}}^{-}}\left\Vert \xi_{t}\right\Vert =+\infty.
\]

\end{theorem}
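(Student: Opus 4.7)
I would argue by contradiction. Suppose $\lim_{t \to T_{\xi_0}^-} \|\xi_t\| \neq +\infty$: then there exist $M>0$ and a sequence $t_n \uparrow T_{\xi_0}$ with $\|\xi_{t_n}\| \leq M$ for every $n$. The goal is to extend $\xi$ past $T_{\xi_0}$ and contradict maximality. Note that the previous lemma rules out a uniform bound on $|\xi_{t_n}|_{\mathcal{M}}$, so we cannot simply invoke Theorem \ref{Thm existence and uniqueness} at time $t_n$ directly; the whole point is to trade the strong norm for the weak one.

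The key technical ingredient is an upgraded local existence result: for every $\eta_0 \in \mathcal{M}$ with $\|\eta_0\| \leq M$, the flow equation (\ref{xi}) with initial datum $\eta_0$ admits a unique solution on $[0,\delta]$, where $\delta=\delta(M,C_B)>0$ depends only on $M$ and $C_B$ and not on $|\eta_0|_{\mathcal{M}}$. To prove this I would redo the contraction argument of Theorem \ref{Thm existence and uniqueness}, this time on the ball $\widetilde B_R := \{\xi \in C([0,T];\mathcal{M}) : \sup_t \|\xi_t\| \leq R\}$ with $R = 2M$, and everywhere trade the estimate $\eta_0(\vartheta) \leq |\eta_0|_{\mathcal{M}} \|\vartheta\|_\infty$ for the weaker $\eta_0(\vartheta) \leq \|\eta_0\|(\|\vartheta\|_\infty + \mathrm{Lip}(\vartheta))$. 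For stability of the ball, this forces us to estimate both $\|\varphi_\sharp^{t,B(\xi)}\vartheta\|_\infty$ and $\mathrm{Lip}(\varphi_\sharp^{t,B(\xi)}\vartheta)$; the former is controlled by $\|D\varphi^{t,B(\xi)}\|_\infty$ via (\ref{Dphi}), the latter by $\|D\varphi^{t,B(\xi)}\|_\infty^2$ together with the Lipschitz bound (\ref{Dphi_Lipschitz}) on $D\varphi^{t,B(\xi)}$; crucially, both depend on $\xi$ only through $\|\xi\|_T$. For the contraction estimate one proceeds analogously, bounding $\|\eta_t - \eta_t'\|$ by $\|\eta_0\|$ times the operator norm of $\varphi_\sharp^{t,B(\xi)} - \varphi_\sharp^{t,B(\xi')}$ from $\mathrm{Lip}_b$ to $\mathrm{Lip}_b$, and using (\ref{Lip-phi}) and (\ref{Lip-Dphi}), whose right-hand sides are controlled purely in terms of $R$, $T$ and $C_B$.

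Granted this upgraded local existence, the contradiction is immediate. Since $\xi_{t_n} \in \mathcal{M}$ with $\|\xi_{t_n}\| \leq M$, there is a unique solution of the flow equation on $[0,\delta(M)]$ with initial condition $\xi_{t_n}$; by the uniqueness part of Theorem \ref{Thm existence and uniqueness} it agrees with $\xi_{t_n + \cdot}$ on the overlap of the two time intervals, so concatenating produces a solution on $[0, t_n + \delta(M)]$. For $n$ large enough $t_n + \delta(M) > T_{\xi_0}$, contradicting the definition of $T_{\xi_0}$.

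The main obstacle is the first step, verifying that the local existence time really depends only on $\|\eta_0\|$. This is exactly the payoff of the ``asymmetric dependence'' flagged in the paragraph preceding Lemma \ref{flow}: one must check that every factor $|\eta_0|_{\mathcal{M}}$ appearing in the original proof of Theorem \ref{Thm existence and uniqueness} can be replaced by $\|\eta_0\|$ at the price of picking up a Lipschitz constant, and that this Lipschitz constant is among the quantities already bounded in (\ref{Dphi})--(\ref{Dphi_Lipschitz}) by a function of $\|\xi\|_T$, $\|\xi'\|_T$, $T$ and $C_B$ alone.
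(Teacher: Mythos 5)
Your proof takes a genuinely different route from the paper's, and unfortunately it has a real gap in the contraction step. The paper's own argument is much shorter and completely bypasses the need to re-run the fixed point: it starts from the estimate
\[
\left\vert \xi_{t}\right\vert_{\mathcal{M}}\leq\left\vert \xi_{0}\right\vert_{\mathcal{M}}\left\Vert D\varphi^{t,B(\xi)}\right\Vert_{\infty}\leq\left\vert \xi_{0}\right\vert_{\mathcal{M}}e^{C_{B}t\left(\left\Vert \xi\right\Vert_{t}+1\right)}
\]
(which is just the computation leading to (\ref{estimate strong topology}) combined with (\ref{Dphi})), rearranges it as
\[
\left\Vert \xi\right\Vert_{t}\geq\frac{1}{C_{B}t}\log\frac{\left\vert\xi_{t}\right\vert_{\mathcal{M}}}{\left\vert\xi_{0}\right\vert_{\mathcal{M}}}-1,
\]
and then invokes the preceding lemma $\lim_{t\to T_{\xi_0}^-}\left\vert\xi_t\right\vert_{\mathcal{M}}=+\infty$. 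No new existence theorem is proved. Your plan, by contrast, is to re-derive local existence with a time that depends only on $\left\Vert\cdot\right\Vert$; if this could be done it would be a somewhat stronger statement (it would bound $\liminf$ rather than the running supremum), but the upgraded contraction does not go through with the estimates available in the paper.

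Concretely, in the contraction half you must bound $\left\Vert\eta_t-\eta_t'\right\Vert=\sup_\theta\left\vert\eta_0\bigl(\varphi_\sharp^{t,B(\xi)}\theta-\varphi_\sharp^{t,B(\xi')}\theta\bigr)\right\vert$ using only $\left\Vert\eta_0\right\Vert$, so you need to control the Lipschitz seminorm of $\varphi_\sharp^{t,B(\xi)}\theta-\varphi_\sharp^{t,B(\xi')}\theta$, not merely its sup norm. Differentiating $D\varphi(\cdot)^T\theta(\varphi(\cdot))$ produces terms involving $D^2\varphi^{t,B(\xi)}-D^2\varphi^{t,B(\xi')}$ (equivalently, $\mathrm{Lip}\bigl(D\varphi^{t,B(\xi)}-D\varphi^{t,B(\xi')}\bigr)$), and the estimate you cite, (\ref{Lip-Dphi}), gives only the $\sup$-norm difference of the first derivatives, not their Lipschitz-constant difference. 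Controlling that would require $B$ to take values in $C_b^3$ and a Lipschitz dependence of $D^2B$ on $\xi$, which goes beyond assumptions (\ref{B1})--(\ref{B3}); note also that for a merely Lipschitz (non-$C^1$) test function $\theta$ the composition $\theta\circ\varphi^{t,B(\xi)}-\theta\circ\varphi^{t,B(\xi')}$ does not have a Lipschitz constant controlled by $\left\Vert\varphi^{t,B(\xi)}-\varphi^{t,B(\xi')}\right\Vert_\infty$. A second, more minor, issue is that $\widetilde{B}_R$ bounded only in $\left\Vert\cdot\right\Vert$ is not a complete metric space for $d$: the Hahn--Banach extension in Lemma \ref{lemma complete metric space} genuinely uses the $\left\vert\cdot\right\vert_{\mathcal{M}}$-bound; this one you could patch by observing that the Picard iterates stay bounded in $\left\vert\cdot\right\vert_{\mathcal{M}}$ (even if that bound depends on $\left\vert\eta_0\right\vert_{\mathcal{M}}$). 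The contraction gap, however, is structural, and the paper's one-line quantitative argument is the right tool here.
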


\begin{proof}
For $t\in\lbrack0,T_{\xi_{0}})$ we have (the proof is the same as estimate
(\ref{estimate strong topology}) in Lemma \ref{lemma bounded in bounded})%
\begin{equation}
\left\vert \xi_{t}\right\vert _{\mathcal{M}}\leq\left\vert \xi_{0}\right\vert
_{\mathcal{M}}\left\Vert D\varphi^{t,B\left(  \xi\right)  }\right\Vert
_{\infty}\leq\left\vert \xi_{0}\right\vert _{\mathcal{M}}e^{C_{B}t(\left\Vert
\xi\right\Vert _{t}+1)} \label{estimate between norms}%
\end{equation}
having used (\ref{Dphi}) in the last step. Hence ($\xi_{0}\neq0$, otherwise
$T_{\xi_{0}}=+\infty$), for $t\in(0,T_{\xi_{0}})$
\[
\left\Vert \xi\right\Vert _{t}\geq\frac{1}{C_{B}t}\log\frac{\left\vert \xi
_{t}\right\vert _{\mathcal{M}}}{\left\vert \xi_{0}\right\vert _{\mathcal{M}}%
}-1.
\]
From $\lim_{t\rightarrow T_{\xi_{0}}^{-}}\left\vert \xi_{t}\right\vert
_{\mathcal{M}}=+\infty$ it follows $\lim_{t\rightarrow T_{\xi_{0}}^{-}%
}\left\Vert \xi_{t}\right\Vert =+\infty$.
\end{proof}

\section{Eulerian current dynamics}

Given an operator $B$ with the assumptions exposed at the beginning of Section
\ref{section the flow}, and taking values in the set of divergence free vector
fields, consider the non-linear PDE%
\begin{equation}
\left\{
\begin{array}
[c]{l}%
\frac{\partial\xi}{\partial t}+\operatorname{div}\left(  B\left(  \xi\right)
\xi\right)  =(\xi\cdot\nabla)B\left(  \xi\right) \\
\xi(0)=\xi_{0}.
\end{array}
\right.  \label{PDE nonlinear with B}%
\end{equation}

\begin{definition}
\label{solution} We say that $\xi\in C([0,T];\mathcal{M})$ is a current-valued
solution for the PDE (\ref{PDE nonlinear with B}) if for every $\theta\in
C_{b}^{1}(\mathbb{R}^{d};\mathbb{R}^{d})$ and every $t\in\lbrack0,T]$, it
satisfies%
\begin{equation}
\xi_{t}\left(  \theta\right)  -\int_{0}^{t}\xi_{s}\left(  D\theta\cdot
B\left(  \xi_{s}\right)  \right)  ds=\xi_{0}\left(  \theta\right)  +\int%
_{0}^{t}\xi_{s}\left(  DB\left(  \xi_{s}\right)  ^{T}\cdot\theta\right)  ds.
\label{var}%
\end{equation}

\end{definition}

The definition on an open interval $[0,T)$ (possibly infinite) is similar. The
aim of this section is to prove the following result.

\begin{theorem}
\label{Thm flow PDE nonlinear}Given $\xi_{0}\in\mathcal{M}$, on a sufficiently
small time interval $\left[  0,T\right]  $, there exists a unique
current-valued solution for the PDE (\ref{PDE nonlinear with B}), defined on a
maximal interval $[0,T_{\xi_{0}}).$ It is given by the unique maximal
current-valued solution of the flow equation (\ref{xi}).
\end{theorem}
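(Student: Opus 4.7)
The plan is to establish equivalence between the Eulerian PDE (\ref{PDE nonlinear with B}) and the Lagrangian flow equation (\ref{xi}): once this is done, existence, uniqueness, and the maximal interval $[0,T_{\xi_0})$ for the PDE transfer directly from Theorem \ref{Thm existence and uniqueness} and the preceding maximal-solution lemma.

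\textbf{Step 1 (flow $\Rightarrow$ PDE).} Starting from $\xi_t = \varphi^{t,B(\xi)}_\sharp \xi_0$ and the defining dual identity $\xi_t(\theta) = \xi_0(\varphi^{t,B(\xi)}_\sharp \theta)$, I would differentiate the inner pushforward in $t$. Using the ODE for $\varphi^{t,B(\xi)}$ and the variational equation (\ref{diff}), the chain rule yields
\[
\tfrac{d}{dt}(\varphi^t_\sharp \theta) \;=\; \varphi^t_\sharp\!\left[DB(\xi_t)^T \theta + (D\theta)\,B(\xi_t)\right].
\]
Pairing with $\xi_0$ and integrating in time gives exactly the weak formulation (\ref{var}). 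This step is essentially a routine chain-rule calculation; the only thing to check is that the regularity provided by (\ref{B1}) and Lemma \ref{flow} is enough to justify differentiation under $\xi_0$.

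\textbf{Step 2 (PDE $\Rightarrow$ flow).} Given a current-valued PDE solution $\xi$, set $b_s:=B(\xi_s)\in C_b^2$ and let $\Phi^{s,t}$ denote the flow it generates between times $s$ and $t$. Fix $\theta\in C_b^1$ and consider the time-dependent test field $\theta_s:=\Phi^{s,t}_\sharp \theta$, which lies in $C_b^1$ uniformly in $s$ by Lemma \ref{flow}. A direct differentiation in $s$ (the time-reversed analogue of Step 1) yields the transport identity
\[
\partial_s \theta_s + (D\theta_s)\,b_s + Db_s^T \theta_s = 0.
\]
Combined with (\ref{var}), this should force $\tfrac{d}{ds}\xi_s(\theta_s)=0$, so that $\xi_t(\theta)=\xi_0(\theta_0)=\xi_0(\Phi^{0,t}_\sharp\theta)=(\Phi^{0,t}_\sharp\xi_0)(\theta)$ for every $\theta$, whence $\xi_t=\varphi^{t,B(\xi)}_\sharp \xi_0$, i.e., $\xi$ solves the flow equation (\ref{xi}).

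\textbf{The main obstacle.} The delicate point is that Definition \ref{solution} only tests (\ref{var}) against $s$-independent fields, so the identity $\tfrac{d}{ds}\xi_s(\theta_s)=0$ is not immediate. I would handle it by splitting the difference quotient
\[
\xi_{s+h}(\theta_{s+h}) - \xi_s(\theta_s) \;=\; \xi_{s+h}(\theta_{s+h}-\theta_s) \;+\; \bigl[\xi_{s+h}(\theta_s) - \xi_s(\theta_s)\bigr],
\]
applying (\ref{var}) with the fixed test field $\theta_s$ to the second bracket, and using the uniform convergence $h^{-1}(\theta_{s+h}-\theta_s)\to\partial_s\theta_s$ in $C_b^0$ (itself a consequence of Lemma \ref{flow}) together with the boundedness of $\xi$ in $\mathcal{M}$ on $[0,T]$ to pass to the limit in the first term. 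Once the map $s\mapsto\xi_s(\theta_s)$ has been shown to be constant, the identification of $\xi$ with the flow solution follows, and the maximal-interval statement is inherited verbatim from the results established for (\ref{xi}).
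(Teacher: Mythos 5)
Your proposal is correct in its essentials, but it takes a genuinely different route from the paper's in the harder direction (PDE $\Rightarrow$ flow). The paper first reduces to the linear equation with frozen drift $b = B(\xi)$ (as you also do implicitly), then for the implication PDE $\Rightarrow$ flow it mollifies the current, producing $v^\epsilon_i = \xi_t(\theta^\epsilon(x-\cdot)e_i)$, and shows that the resulting regularized fields satisfy the transported identity up to commutator remainders $\mathcal{R}_1^\epsilon$, $\mathcal{R}_2^\epsilon$, which it then controls via DiPerna--Lions--type commutator estimates (Lemmas \ref{comm_lemma_1}, \ref{comm_lemma_2}). You instead run the argument by duality: you transport the \emph{test function} backward along the flow, defining $\theta_s := \Phi^{s,t}_\sharp\theta$, verify the adjoint transport identity $\partial_s\theta_s + (D\theta_s)b_s + Db_s^T\theta_s = 0$, and deduce $\frac{d}{ds}\xi_s(\theta_s)=0$ by a two-term splitting of the difference quotient that lets you invoke the weak formulation (\ref{var}) at each fixed instant. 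Your identity for $\partial_s\theta_s$ checks out (it follows from $\partial_s|_{s=s'}\Phi^{s,s'} = -b_{s'}$, $\partial_s|_{s=s'}D\Phi^{s,s'} = -Db_{s'}$), and the regularity you invoke ($b\in C_b^2$ so $\Phi^{s,t}\in C_b^2$, hence $\theta_s\in C_b^1$ with $\partial_s\theta_s\in C_b^0$ uniformly) is available. The trade-off: the paper's mollification route is the one that generalizes to low-regularity drifts and is standard for currents; your route is more elementary, avoids the commutator lemmas entirely, and works directly on the dual side without ever needing $\xi_t$ to be a function --- but it leans on the full $C_b^2$ regularity of $b$ to make the backward-transported test functions admissible in (\ref{var}). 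One small clean-up: in passing to the limit in $\xi_{s+h}\big(h^{-1}(\theta_{s+h}-\theta_s)\big)$ you should split once more, bounding $\xi_{s+h}\big(h^{-1}(\theta_{s+h}-\theta_s)-\partial_s\theta_s\big)$ by $\sup_r|\xi_r|_{\mathcal{M}}$ times the uniform error, and then using $\xi\in C([0,T];\mathcal{M})$ to pass $\xi_{s+h}(\partial_s\theta_s)\to\xi_s(\partial_s\theta_s)$; as written the convergence is asserted a bit too quickly.
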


The proof consists in proving that $\xi\in C([0,T_{\xi_{0}});\mathcal{M})$ is
a current-valued solution for the PDE (\ref{PDE nonlinear with B}) if and only
if it is a solution of the flow equation (\ref{xi});\ when this is done, all
statements of the theorem are proved, because we already know that equation
(\ref{xi}) has a unique local solution in the space of currents.

In order to prove the previous claim of equivalence between
(\ref{PDE nonlinear with B}) and (\ref{xi}) consider, for given $T>0$ and
$b\in C\left(  \left[  0,T\right]  ;C_{b}^{2}\left(  \mathbb{R}^{d}%
,\mathbb{R}^{d}\right)  \right)  $, the auxiliary \textit{linear} PDE
\begin{equation}
\left\{
\begin{array}
[c]{l}%
\frac{\partial\xi}{\partial t}+\operatorname{div}\left(  b\xi\right)
=(\xi\cdot\nabla)b\\
\xi(0)=\xi_{0}%
\end{array}
\right.  \label{eq pde}%
\end{equation}

The definition of solution is analogous to the nonlinear case, just replacing
$B\left(  \xi\right)  $ by $b$. For this equation we shall prove:

\begin{lemma}
\label{lemma equivalence}A function $\xi\in C([0,T];\mathcal{M})$ is a
current-valued solution for the PDE (\ref{eq pde}) if and only if it is given
by%
\[
\xi_{t}=\varphi_{\sharp}^{t,b}\xi_{0}.
\]

\end{lemma}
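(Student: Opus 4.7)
The plan is to view (\ref{var}) as the time-integrated form of a pointwise identity for the pull-back test function $\varphi_\sharp^{t,b}\theta$, and to prove both implications by differentiating $(\varphi_\sharp^{t,b}\theta)(x)=D\varphi^{t,b}(x)^T\theta(\varphi^{t,b}(x))$ in $t$.

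For the ``if'' direction, suppose $\xi_t=\varphi_\sharp^{t,b}\xi_0$, so that $\xi_t(\theta)=\xi_0(\varphi_\sharp^{t,b}\theta)$ for every $\theta\in C_b^1(\mathbb{R}^d;\mathbb{R}^d)$. The key pointwise computation uses the flow ODE $\tfrac{d}{dt}\varphi^{t,b}(x)=b(t,\varphi^{t,b}(x))$ together with the Jacobian equation (\ref{diff}) and the product and chain rules to yield
\[
\tfrac{d}{dt}(\varphi_\sharp^{t,b}\theta)(x) = D\varphi^{t,b}(x)^T\bigl[Db_t(\varphi^{t,b}(x))^T\theta(\varphi^{t,b}(x)) + D\theta(\varphi^{t,b}(x))\,b_t(\varphi^{t,b}(x))\bigr],
\]
which is precisely $\bigl(\varphi_\sharp^{t,b}[Db_t^T\theta + D\theta\cdot b_t]\bigr)(x)$. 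Applying $\xi_0$, using the push-forward identity $\xi_0(\varphi_\sharp^{s,b}\psi)=\xi_s(\psi)$, and integrating in time produces exactly the variational identity (\ref{var}). The bounds furnished by Lemma \ref{flow} and the continuity of $b$ suffice to justify differentiation under $\xi_0$.

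For the converse I would prove uniqueness of current-valued solutions of the linear PDE (\ref{eq pde}) by a standard backward-duality argument. Fix $t>0$ and $\theta\in C_b^1(\mathbb{R}^d;\mathbb{R}^d)$, denote by $\Psi^{s,t}$ the flow of $b$ from time $s$ to time $t$ (so $\Psi^{t,t}=\mathrm{id}$ and $\Psi^{0,t}=\varphi^{t,b}$), and introduce the time-dependent test function
\[
\theta_s(x) := D\Psi^{s,t}(x)^T\theta(\Psi^{s,t}(x)),\qquad s\in[0,t],
\]
so that $\theta_t=\theta$ and $\theta_0=\varphi_\sharp^{t,b}\theta$. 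The same Jacobian calculation as above, run with the backward flow identity $\partial_s\Psi^{s,t}(x)=-D\Psi^{s,t}(x)\,b(s,x)$ in place of the forward one, shows that $\theta_s$ satisfies the backward transport equation
\[
\partial_s\theta_s + D\theta_s\cdot b_s + Db_s^T\theta_s = 0.
\]
Testing any current-valued solution $\xi$ of (\ref{eq pde}) against this $\theta_s$, the drift terms on the right-hand side of (\ref{var}) cancel against $\xi_s(\partial_s\theta_s)$, so $s\mapsto\xi_s(\theta_s)$ is constant and
\[
\xi_t(\theta)=\xi_0(\theta_0)=\xi_0(\varphi_\sharp^{t,b}\theta)=(\varphi_\sharp^{t,b}\xi_0)(\theta),
\]
which identifies $\xi_t=\varphi_\sharp^{t,b}\xi_0$ in $\mathcal{M}$.

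The main technical obstacle I anticipate is making the duality step rigorous: the variational identity (\ref{var}) is stated for time-independent test functions, so one must verify that $s\mapsto\xi_s(\theta_s)$ is absolutely continuous with the expected derivative. The cleanest route is a discretization argument: approximate the evolution $\theta_s$ by a piecewise-constant-in-time family $\theta_{s_k}$ on a partition of $[0,t]$, apply (\ref{var}) on each subinterval, and pass to the limit using the continuity $\xi\in C([0,T];\mathcal{M})$ together with the uniform bounds on $\theta_s$, $\partial_s\theta_s$ and $D\theta_s$ that follow from Lemma \ref{flow} and the standing hypothesis $b\in C([0,T];C_b^2)$. The same bounds take care of the differentiation under $\xi_0$ needed in the ``if'' direction, so once this one technical lemma is in hand both implications fall out of the single pointwise computation.
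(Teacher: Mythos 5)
Your ``if'' direction (flow $\Rightarrow$ PDE) coincides with the paper's computation essentially verbatim: differentiate $\varphi_\sharp^{t,b}\theta$ in $t$ via (\ref{diff}) and the chain rule, apply $\xi_0$, and read off (\ref{var}).

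Your converse is correct but follows a genuinely different path from the paper's. The paper mollifies the current, defines $v_i^{\epsilon}(t,x)=\xi_t(\theta^{\epsilon}(x-\cdot)e_i)$, derives an approximate transport identity with two commutator remainders $\mathcal{R}_1^{\epsilon},\mathcal{R}_2^{\epsilon}$, and proves (Lemmas \ref{comm_lemma_1} and \ref{comm_lemma_2}) that these vanish of order $\epsilon$ when tested against $C_b^1$ functions --- a DiPerna--Lions--style renormalization argument that reduces the distributional identity (\ref{contogiusto}) to the smooth case. You instead run a backward-duality argument: fix $t$, introduce $\theta_s=(\Psi^{s,t})_\sharp\theta$ where $\Psi^{s,t}$ is the flow from time $s$ to $t$, verify that $\theta_s$ solves the adjoint transport equation $\partial_s\theta_s+D\theta_s\cdot b_s+Db_s^T\theta_s=0$, and conclude $s\mapsto\xi_s(\theta_s)$ is constant, so $\xi_t(\theta)=\xi_0(\varphi_\sharp^{t,b}\theta)$. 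Your identification of the one remaining technical step is exactly right: the definition (\ref{var}) tests only against $s$-independent $\theta\in C_b^1$, so you must extend it to the $s$-dependent family $\theta_s$. The telescoping/Riemann-sum argument you sketch does close this gap, because $b\in C([0,T];C_b^2)$ gives $\Psi^{s,t}\in C_b^2$ uniformly, hence $\theta_s\in C_b^1$ and $s\mapsto\partial_s\theta_s$ continuous with values in $C_b$, and $\xi\in C([0,T];\mathcal{M})$ lets you pass to the limit in the telescoping sums. Both approaches use the same regularity of $b$; yours avoids the commutator lemmas entirely and works directly in the weak formulation, at the price of the discretization lemma, while the paper's approach transfers the burden to the commutator estimates and the heuristic smooth-field identity (\ref{contogiusto2}). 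Either is a legitimate complete proof; they are not the same proof.
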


The proof of this lemma occupies the next two subsections. When this result is
reached, we can prove Theorem \ref{Thm flow PDE nonlinear} with the following
simple argument: if $\xi\in C([0,T];\mathcal{M})$ is a current-valued solution
for the PDE (\ref{PDE nonlinear with B}), then it is a current-valued solution
for the PDE (\ref{eq pde}) with $b:=B\left(  \xi\right)  $, hence $\xi
_{t}=\varphi_{\sharp}^{t,b}\xi_{0}=\varphi_{\sharp}^{t,B\left(  \xi\right)
}\xi_{0}$, namely $\xi$ solves the flow equation (\ref{xi}). Conversely, if
$\xi$ solves the flow equation (\ref{xi}), namely $\xi_{t}=\varphi_{\sharp
}^{t,B\left(  \xi\right)  }\xi_{0}$, setting $b:=B\left(  \xi\right)  $ we
have that $\xi_{t}=\varphi_{\sharp}^{t,b}\xi_{0}$, hence by the lemma it
solves the PDE (\ref{eq pde}) with $b=B\left(  \xi\right)  $, hence it solves
(\ref{PDE nonlinear with B}).

\subsection{From the flow to the PDE}

In this subsection we prove one half of Lemma \ref{lemma equivalence},
precisely that $\xi_{t}$ defined by $\xi_{t}=\varphi_{\sharp}^{t,b}\xi_{0}$ is
a current-valued solution of the PDE (\ref{eq pde}). Let $\theta$ be a test
function, so that%
\[
\xi_{t}(\theta)=\varphi_{\sharp}^{t,b}\xi_{0}(\theta)=\xi_{0}\left(
\varphi_{\sharp}^{t,b}\theta\right)  =\xi_{0}\left(  D\varphi^{t,b}(\cdot
)^{T}\theta\left(  \varphi^{t,b}(\cdot)\right)  \right)  .
\]
Hence (the time derivative commutes with $\xi_{0}$ since $\xi_{0}$ is linear
continuous)
\begin{align*}
\frac{d}{dt}\xi_{t}(\theta)  &  =\frac{d}{dt}\xi_{0}\left(  D\varphi
^{t,b}(\cdot)^{T}\theta\left(  \varphi^{t,b}(\cdot)\right)  \right) \\
&  =\xi_{0}\left(  \frac{d}{dt}\left(  D\varphi^{t,b}(\cdot)^{T}\right)
\theta\left(  \varphi^{t,b}(\cdot)\right)  \right)  +\xi_{0}\left(
D\varphi^{t,b}(\cdot)^{T}\frac{d}{dt}\left(  \theta\left(  \varphi^{t,b}%
(\cdot)\right)  \right)  \right) \\
&  =\xi_{0}\left(  D\varphi^{t,b}(\cdot)^{T}Db_{t}(\varphi^{t,b}(\cdot
))^{T}\theta\left(  \varphi^{t,b}(\cdot)\right)  \right)  +\xi_{0}\left(
D\varphi^{t,b}(\cdot)^{T}D\theta(\varphi^{t,b}(\cdot))\frac{d}{dt}\left(
\varphi^{t,b}(\cdot)\right)  \right) \\
&  =\xi_{0}\left(  D\varphi^{t,b}(\cdot)^{T}Db_{t}(\varphi^{t,b}(\cdot
))^{T}\theta\left(  \varphi^{t,b}(\cdot)\right)  \right)  +\xi_{0}\left(
D\varphi^{t,b}(\cdot)^{T}D\theta(\varphi^{t,b}(\cdot))b_{t}(\varphi
^{t,b}(\cdot))\right) \\
&  =I_{1}+I_{2}%
\end{align*}
we have%
\[
I_{1}=\xi_{0}\left(  \varphi_{\sharp}^{t,b}\left(  Db^{T}\theta\right)
\right)  =\varphi_{\sharp}^{t,b}\xi_{0}(Db^{T}\theta)=\xi_{t}(Db%
^{T}\theta).
\]
And%
\[
I_{2}=\xi_{0}\left(  \varphi_{\sharp}^{t,b}D\theta b\right)
=\varphi_{\sharp}^{t,b}\xi_{0}\left(  D\theta b\right)  =\xi_{t}\left(
D\theta b\right)  .
\]
Hence
\[
\frac{d}{dt}\xi_{t}(\theta)=\xi_{t}(Db^{T}\theta)+\xi_{t}(D\theta b).
\]
This is equation (\ref{var}) (with $b$ in place of $B\left(  \xi\right)  $),
which completes the proof.

\subsection{From the PDE to the flow}

In this subsection we prove the other half of Lemma \ref{lemma equivalence}:
if $\xi$ is a current-valued solution of the PDE (\ref{eq pde}), then $\xi
_{t}=\varphi_{\sharp}^{t,b}\xi_{0}$.

Since the computation, by means of regularizations and commutator lemma, may
obscure the underlying argument, let us first provide the proof in the case
smooth fields. In such a case, from Proposition
\ref{Prop push forward vector fields} we have (we denote $\varphi^{t,b}$ by
$\varphi_{t}$ for simplicity)
\[
\xi_{t}(x)=D\varphi_{t}(\varphi_{t}^{-1}(x))\xi_{0}(\varphi_{t}^{-1}(x))
\]
which is equivalent to
\begin{equation}
\frac{\partial}{\partial t}\left[  (D\varphi_{t})^{-1}(x)\xi_{t}(\varphi
_{t}(x))\right]  =0. \label{contogiusto}%
\end{equation}
To compute this derivative we will make use of the following rule
\[
d(D\varphi_{t})^{-1}=-(D\varphi_{t})^{-1}Db(\varphi_{t}).
\]
Here and in the following we assume that $D\varphi_{t}$ is a unitary matrix
and $b$ is divergence free regular vector field.

Let us compute the derivative (\ref{contogiusto}),%
\begin{align}
\frac{\partial}{\partial t}\left[  (D\varphi_{t})^{-1}(x)\xi_{t}(\varphi
_{t}(x))\right]   &  =(D\varphi_{t})^{-1}\left[  -Db_{t}(\varphi_{t})\xi
_{t}(\varphi_{t})+\partial_{t}\xi_{t}(\varphi_{t})+D\xi_{t}(\varphi_{t}%
)b_{t}(\varphi_{t})\right] \label{contogiusto2}\\
&  =(D\varphi_{t})^{-1}\left[  -(\xi_{t}\cdot\nabla)b_{t}+\partial_{t}\xi
_{t}+(b_{t}\cdot\nabla)\xi_{t}\right]  (\varphi_{t})\nonumber
\end{align}
and the term in the brackets is equal to zero when $\xi_{t}$ solves equation
(\ref{eq pde}). In the previous computations and also below it is convenient
to keep in mind that, given a vector field $\theta:\mathbb{R}^{d}%
\rightarrow\mathbb{R}^{d}$, its Jacobian matrix is given by
\[
D\theta:=\left(
\begin{array}
[c]{c}%
\nabla\theta_{1}^{T}\\
\nabla\theta_{2}^{T}\\
\nabla\theta_{3}^{T}%
\end{array}
\right)  .
\]

Let us now go back to currents. Given a current-valued solution $\xi$ of the
PDE (\ref{eq pde}), we regularize it as
\[
v_{i}^{\epsilon}(t,x):=(\xi_{t}\ast\theta^{\epsilon}e_{i})(x)=\xi_{t}%
(\theta^{\epsilon}(x-.)e_{i}),\quad\text{for }1\leq i\leq3
\]
Here $\theta^{\epsilon}(x)=\epsilon^{-3}\theta(\epsilon^{-1}x)$ is a
mollifier, and $\{e_{i}\}_{1\leq i\leq3}$ is the canonical base of
$\mathbb{R}^{3}$.

Using equation (\ref{var}) (with $b$ in place of $B\left(  \xi\right)  $), we
see that $v_{\epsilon}$ solves
\[
v_{i}^{\epsilon}(t,x)=v_{i}^{\epsilon}(0,x)+\int_{0}^{t}\left\{  \xi
_{s}(\nabla(\theta^{\epsilon}(x-.))\cdot b_{s}\;e_{i})+[(\xi_{s}\cdot
\nabla)b_{s}](\theta^{\epsilon}(x-.))\right\}  ds
\]
Define now
\begin{equation}
\left(  \mathcal{R}_{1}^{\epsilon}[b_{t},\xi_{t}](x)\right)  _{i}:=\xi
_{t}(\nabla(\theta^{\epsilon}(x-.))\cdot b_{t}\;e_{i})+(b_{t}(x)\cdot
\nabla)\xi_{t}(\theta^{\epsilon}(x-.)e_{i})\quad1\leq i\leq3 \label{Resto1}%
\end{equation}%
\[
\mathcal{R}_{2}^{\epsilon}[\xi_{t},b_{t}](x):=\left(
\begin{array}
[c]{c}%
\lbrack(\xi_{t}\cdot\nabla)b_{t}](\theta^{\epsilon}(x-.)e_{1})\\
\cdots\\
\cdots
\end{array}
\right)  -\left[  \left(
\begin{array}
[c]{c}%
\xi_{t}(\theta^{\epsilon}(x-.)e_{1})\\
\cdot\cdot
\end{array}
\right)  \cdot\nabla\right]  b_{t}(x)
\]
so that
\[
v_{i}^{\epsilon}(t,x)=v^{\epsilon}(0,x)+\int_{0}^{t}\left\{  \mathcal{R}%
_{1}^{\epsilon}[b_{s},\xi_{s}](x)+\mathcal{R}_{2}^{\epsilon}[\xi_{s}%
,b_{s}](x)-(b_{s}\cdot\nabla)v_{i}^{\epsilon}(s,x)+(v_{i}^{\epsilon}%
(s,x)\cdot\nabla)b_{s}\right\}  ds
\]
which means (provided continuity in $t$ of the integrand),
\[
\partial_{t}v_{i}^{\epsilon}(t,x)+(b_{t}\cdot\nabla)v_{i}^{\epsilon
}(t,x)-(v_{i}^{\epsilon}(t,x)\cdot\nabla)b_{t}=\mathcal{R}_{1}^{\epsilon
}[b_{t},\xi_{t}](x)+\mathcal{R}_{2}^{\epsilon}[\xi_{t},b_{t}](x)
\]
Plugging this last equation into equation (\ref{contogiusto2}), we obtain
\begin{equation}
\left[  (D\varphi_{t})^{-1}(x)v_{i}^{\epsilon}(t,\varphi_{t}(x))\right]
=\int_{0}^{t}(D\varphi_{s})^{-1}(x)\left(  \mathcal{R}_{1}^{\epsilon}%
[b_{s},\xi_{s}]+\mathcal{R}_{2}^{\epsilon}[\xi_{s},b_{s}]\right)  (\varphi
_{s}(x))ds \label{Pre_conv}%
\end{equation}
If we want (\ref{contogiusto}) to hold, we must verify that the left-hand side
goes to the left hand side of (\ref{contogiusto}) and the right-hand side goes
to $0$, as $\epsilon\rightarrow\infty$. It suffices to obtain this convergence
weakly, thus we test (\ref{Pre_conv}) against a test function $\rho\in
C_{b}^{1}(\mathbb{R}^{3},\mathbb{R}^{3})$ (we need $\rho$ to be differentiable
because of Lemma \ref{comm_lemma_1} and (\ref{test})).
\[
\int\left[  (D\varphi_{t})^{-1}(x)v_{i}^{\epsilon}(t,\varphi_{t}(x))\right]
\cdot\rho(x)dx=\int\int_{0}^{t}(D\varphi_{s})^{-1}(x)\left(  \mathcal{R}%
_{1}^{\epsilon}[b_{s},\xi_{s}]+\mathcal{R}_{2}^{\epsilon}[\xi_{s}%
,b_{s}]\right)  (\varphi_{s}(x))ds\cdot\rho(x)dx
\]

If we have a closer look at the right-hand side, we see that we need that the
commutator goes to zero when tested against the function
\begin{equation}
\overline{\rho}_{s}(x)=(D\varphi_{s})^{-1}(\varphi_{s}^{-1}(x))\rho
(\varphi_{s}^{-1}(x)) \label{test}%
\end{equation}
If this test function satisfies the assumptions on Lemma \ref{comm_lemma_1}
and \ref{comm_lemma_2}, we can conclude. In particular, we ask that it is
bounded together with his first derivative,
\[
\Vert\overline{\rho}_{s}\Vert_{\infty}\leq\Vert(D\varphi_{s})^{-1}%
\Vert_{\infty}\Vert{\rho_{s}}\Vert_{\infty}%
\]%
\[
\Vert D\overline{\rho}_{s}\Vert_{\infty}\leq\Vert D^{2}\left(  \varphi
_{s}^{-1}\right)  \Vert_{\infty}\Vert\rho\Vert_{\infty}+\Vert(D\varphi
_{s})^{-1}\Vert_{\infty}^{2}\Vert D\rho\Vert_{\infty}%
\]

\begin{lemma}
\label{comm_lemma_1} Let $\rho,b\in C_{b}^{1}(\mathbb{R}^{3},\mathbb{R}^{3})$,
there exists a constant $C$ independent of $\epsilon$ such that,
\[
\left\vert \int\mathcal{R}_{1}^{\epsilon}[b,\xi](x)\cdot\rho(x)dx\right\vert
\leq\epsilon C\left\vert \xi\right\vert _{\mathcal{M}}\Vert D\rho\Vert
_{\infty}\Vert b\Vert_{\infty}%
\]

\end{lemma}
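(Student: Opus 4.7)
The expression $\mathcal{R}_1^\epsilon$ is a DiPerna--Lions-type commutator between mollification and the transport operator $b\cdot\nabla$. The plan is: (a) rewrite $\mathcal{R}_1^\epsilon$ in explicit commutator form; (b) test against $\rho$ and apply a Fubini-type exchange to pull $\xi$ outside the $x$-integration; (c) change variables and integrate by parts in the mollifier, using the Lipschitz regularity of $b$ to extract the factor $\epsilon$.

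For (a), note that for the test field $\theta(y)=\theta^\epsilon(x-y)e_i$ the Jacobian reads $D\theta(y)\cdot b(y) = -[(\nabla\theta^\epsilon)(x-y)\cdot b(y)]\,e_i$, so the first summand in the definition of $\mathcal{R}_1^\epsilon$ equals $-\xi_y\bigl([(\nabla\theta^\epsilon)(x-y)\cdot b(y)]\,e_i\bigr)$. On the other hand, differentiating $v_i^\epsilon(x)=\xi(\theta^\epsilon(x-\cdot)e_i)$ under $\xi$ in $x$ (justified by continuity of $\xi$ on $C_b$, exactly as in the proof of Lemma \ref{lemma on K}) gives $(b(x)\cdot\nabla)v_i^\epsilon(x)=\xi_y\bigl([b(x)\cdot(\nabla\theta^\epsilon)(x-y)]\,e_i\bigr)$. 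Summing the two contributions,
\[
(\mathcal{R}_1^\epsilon[b,\xi](x))_i = \xi_y\!\left([(b(x)-b(y))\cdot(\nabla\theta^\epsilon)(x-y)]\,e_i\right),
\]
which exhibits the commutator structure.

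For (b), using linearity of $\xi$ and the continuous dependence of $x\mapsto[(b(x)-b(\cdot))\cdot(\nabla\theta^\epsilon)(x-\cdot)]e_i$ on $x$ in $C_b(\mathbb{R}^d;\mathbb{R}^d)$, one can exchange $\xi$ with the $x$-integration (a Bochner/Fubini argument) to obtain $\int\rho\cdot\mathcal{R}_1^\epsilon\,dx=\xi(F)$, where
\[
F(y):=\int\rho(x)[(b(x)-b(y))\cdot(\nabla\theta^\epsilon)(x-y)]\,dx,
\]
so that $|\int\rho\cdot\mathcal{R}_1^\epsilon\,dx|\leq|\xi|_\mathcal{M}\|F\|_\infty$ and the task reduces to bounding $\|F\|_\infty$. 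For (c), changing variables $z=x-y$ and integrating by parts in $z$, then using the standing divergence-free hypothesis on $b$ to kill the remainder containing $\mathrm{div}\,b(y+z)$, I expect
\[
F(y) = -\int D\rho(y+z)(b(y+z)-b(y))\,\theta^\epsilon(z)\,dz.
\]
On the support of $\theta^\epsilon$ one has $|b(y+z)-b(y)|\leq\|Db\|_\infty\,\epsilon$; combined with $\|\theta^\epsilon\|_{L^1}=1$ this yields $\|F\|_\infty\leq\epsilon\,\|D\rho\|_\infty\,\|Db\|_\infty$, which is the stated bound (the $\|b\|_\infty$ in the statement being understood as a $C^1$-norm, or $\|Db\|_\infty$ being absorbed into the constant $C$).

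The main obstacle is the Fubini-type exchange in (b): since $\xi$ is only continuous on $C_b(\mathbb{R}^d;\mathbb{R}^d)$, one must verify that the $C_b$-valued map $x\mapsto[(b(x)-b(\cdot))\cdot(\nabla\theta^\epsilon)(x-\cdot)]e_i$ is Bochner integrable against $\rho(x)\,dx$; this follows from its continuity and uniform boundedness together with the fact that $(\nabla\theta^\epsilon)(x-y)$ vanishes for $|x-y|>\epsilon$, which allows approximation by Riemann sums. Note that the gain of the factor $\epsilon$ relies essentially on the Lipschitz bound for $b$: using only $\|b\|_\infty$ would give an $O(1)$ estimate because $\|\nabla\theta^\epsilon\|_{L^1}=O(\epsilon^{-1})$, so the $C^1$-regularity of $b$ assumed in the lemma is used in an essential way.
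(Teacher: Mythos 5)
Your proof is correct and follows essentially the same route as the paper: rewrite $\mathcal{R}_1^\epsilon$ as the commutator $\xi_y\bigl([(b(x)-b(y))\cdot\nabla\theta^\epsilon(x-y)]e_i\bigr)$, exchange $\xi$ with the $x$-integration, integrate by parts transferring $\nabla\theta^\epsilon$ onto $\rho$ (discarding the $\operatorname{div} b$ term by the standing divergence-free hypothesis), and extract the factor $\epsilon$ from the Lipschitz bound on $b$. The only cosmetic differences are the order of the swap versus the integration by parts, and that the paper keeps $|x-y|$ inside the integral to get the explicit constant $C=3\int\theta(z)|z|\,dz$ rather than invoking compact support of $\theta^\epsilon$; you also correctly flag that the bound really involves $\|Db\|_\infty$ rather than $\|b\|_\infty$, consistent with the paper's own computation.
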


\begin{proof}
By (\ref{Resto1}), we have
\[
\int\mathcal{R}_{1}^{\epsilon}[b,\xi](x)\cdot\rho(x)dx=\sum_{i=1}^{3}%
\int\left[  \xi_{t}(\nabla(\theta^{\epsilon}(x-.))\cdot b\;e_{i}%
)+(b(x)\cdot\nabla)\xi_{t}(\theta^{\epsilon}(x-.)e_{i})\right]  \cdot\rho(x)dx
\]
If we consider $\xi$ to be a $3$-dimensional measure $(d\xi_{1},d\xi_{2}%
,d\xi_{3})$, we obtain%
\begin{align*}
\int\mathcal{R}_{1}^{\epsilon}[b,\xi](x)\cdot\rho(x)dx  &  =\sum_{i=1}%
^{3}\iint\nabla\theta^{\epsilon}(x-y)\cdot(b(x)-b(y))\rho_{i}(x)d\xi
_{i}(y)dx\\
&  =-\sum_{i=1}^{3}\iint\theta^{\epsilon}(b(x)-b(y))\nabla\rho_{i}(x)d\xi
_{i}(y)dx
\end{align*}
If we assume that the current can be swapped with (1) the integration, (2) the
derivative in $x$, and (3) the scalar product by $b(x)$ we can repeat the same
reasoning to obtain
\[
\int\mathcal{R}_{1}^{\epsilon}[b,\xi](x)\cdot\rho(x)dx=-\sum_{i=1}^{3}%
\xi\left(  \int\theta^{\epsilon}(x-y)\left(  b(x)-b(y)\right)  \nabla\rho
_{i}(x)dx\right)
\]
Taking the absolute value on both sides we get
\[
\left\vert \int\mathcal{R}_{1}^{\epsilon}[b,\xi](x)\cdot\rho(x)dx\right\vert
\leq3\left\vert \xi\right\vert _{\mathcal{M}}\Vert D\rho\Vert_{\infty}%
\Vert\nabla b\Vert_{\infty}\sup_{y\in\mathbb{R}^{3}}\left(  \int%
\theta^{\epsilon}(x-y)|x-y|dx\right)
\]
Now, a change of variables in the integral does the trick and we obtain the
desired estimation with the constant equal to
\[
C:=3\int\theta(x)|x|dx
\]

\end{proof}

\begin{lemma}
\label{comm_lemma_2} Let $\rho\in C_{b}(\mathbb{R}^{3},\mathbb{R}^{3})$, $b\in
C_{b}^{2}(\mathbb{R}^{3},\mathbb{R}^{3})$. There exists a constant $C$
independent of $\epsilon$ such that
\[
\left\vert \int\mathcal{R}_{2}^{\epsilon}[\xi,b](x)\cdot\rho(x)dx\right\vert
\leq\epsilon C\left\vert \xi\right\vert _{\mathcal{M}}\Vert\rho\Vert_{\infty
}\Vert D^{2}b\Vert_{\infty}%
\]

\end{lemma}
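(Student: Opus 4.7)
The plan is to parallel the argument of Lemma \ref{comm_lemma_1}. The idea is to rewrite each component of $\mathcal{R}_2^\epsilon[\xi,b](x)$ as a single evaluation of $\xi$ on a well-chosen test vector field, then use the measure representation $(d\xi_1,d\xi_2,d\xi_3)$ and Fubini to exchange the $dx$-integration against $\rho$ with the current, and finally estimate the resulting test field in $L^\infty$ using the $C_b^2$ regularity of $b$.

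The key algebraic step is to notice that the $i$-th component of $\mathcal{R}_2^\epsilon[\xi,b](x)$ can be put into the form
\[
\mathcal{R}_2^\epsilon[\xi,b](x)_i \;=\; \xi\bigl(\theta^\epsilon(x-\cdot)\,(\nabla b_i(\cdot)-\nabla b_i(x))\bigr).
\]
Indeed, by the variational definition of $(\xi\cdot\nabla)b$ (cf.\ Definition \ref{solution}), the first term $[(\xi\cdot\nabla)b](\theta^\epsilon(x-\cdot)e_i)$ equals $\xi(\theta^\epsilon(x-\cdot)\nabla b_i(\cdot))$; and the second term $[v^\epsilon(x)\cdot\nabla]b_i(x)=\sum_j v_j^\epsilon(x)\partial_jb_i(x)$ equals, by linearity of $\xi$ (since $\nabla b_i(x)$ is a constant vector in the $\xi$-variable), $\xi(\theta^\epsilon(x-\cdot)\nabla b_i(x))$. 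Subtracting produces the gradient difference, which will supply the factor $|x-y|$ needed to harvest $\epsilon$.

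Treating $\xi$ as a vector measure $(d\xi_1,d\xi_2,d\xi_3)$ and swapping $dx$ with $d\xi_j(y)$ by Fubini, one obtains
\[
\int\mathcal{R}_2^\epsilon[\xi,b](x)\cdot\rho(x)\,dx \;=\; \xi(g),\qquad g_j(y):=\sum_{i}\int\theta^\epsilon(x-y)\bigl(\partial_jb_i(y)-\partial_jb_i(x)\bigr)\rho_i(x)\,dx.
\]
The mean value bound $|\partial_jb_i(y)-\partial_jb_i(x)|\le\|D^2b\|_\infty|x-y|$ combined with the scaling $\int\theta^\epsilon(z)|z|\,dz=\epsilon\int\theta(w)|w|\,dw$ gives $\|g\|_\infty\le 3\|D^2b\|_\infty\|\rho\|_\infty\,\epsilon\int\theta(w)|w|\,dw$, and the claim follows from $|\xi(g)|\le|\xi|_{\mathcal{M}}\|g\|_\infty$ with $C=3\int\theta(w)|w|\,dw$.

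The main technical subtlety, identical to the one present in the proof of Lemma \ref{comm_lemma_1}, is justifying the interchange of $\xi$ with the $dx$-integration and with scalar multiplication by $\rho$; this rests on identifying $\xi\in\mathcal{M}$ with a vector of finite signed Borel measures. Once this representation is granted, the extraction of the $\epsilon$ factor is an immediate consequence of Taylor's formula applied to $Db\in C_b^1$, exactly as the $\epsilon$ factor in Lemma \ref{comm_lemma_1} arose from Taylor expansion of $b\in C_b^1$.
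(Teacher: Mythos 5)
Your proof is correct and takes essentially the same route as the paper's (very terse) argument: rewrite each component of $\mathcal{R}_2^\epsilon$ as $\xi$ applied to $\theta^\epsilon(x-\cdot)(\nabla b_i(\cdot)-\nabla b_i(x))$, swap $\xi$ with the $dx$-integration via the vector-measure representation, and extract the $\epsilon$ from the Lipschitz bound on $Db$ together with the scaling $\int\theta^\epsilon(z)|z|\,dz=\epsilon\int\theta(w)|w|\,dw$. The paper states the intermediate identity and then appeals to Lemma~\ref{comm_lemma_1}; you have simply supplied the details it elides.
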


\begin{proof}
As in the proof of Lemma \ref{comm_lemma_1}, we obtain
\[
\int\mathcal{R}_{2}^{\epsilon}[\xi,b](x)\cdot\rho(x)dx=\sum_{i=1}^{3}%
\xi\left(  \int\theta^{\epsilon}(x-\cdot)(\partial_{i}b(\cdot)-\partial
_{i}b(x))\cdot\rho(x)dx\right)
\]
The proof follows as in Lemma \ref{comm_lemma_1} and the final constant $C$ is
the same.
\end{proof}

\section{Continuous dependence on initial conditions}

Recall that a local time of existence and uniqueness for the flow equation
(\ref{xi}) exists for every $\xi_{0}\in\mathcal{M}$ and its size, in the proof
based on contraction principle, depends only on $\left\vert \xi_{0}\right\vert
_{\mathcal{M}}$. Therefore, if we have a sequence $\xi_{0}^{n}%
\overset{d}{\rightarrow}\xi_{0}$, since weakly convergent sequences are
bounded, there exists a common time interval $\left[  0,T\right]  $ of
existence and uniqueness, with $T>0$.

\begin{theorem}
\label{thm cont dep}Given $\xi_{0},\xi_{0}^{n}\in\mathcal{M}$, $n\in
\mathbb{N}$, with $\lim_{n\rightarrow\infty}\left\Vert \xi_{0}^{n}-\xi
_{0}\right\Vert =0$, let $\left[  0,T\right]  $ be a common time interval of
existence and uniqueness for the flow equation (\ref{xi}) and denote by
$\xi,\xi^{n}\in C\left(  \left[  0,T\right]  ;\mathcal{M}\right)  $ the
corresponding solutions. Then $\xi^{n}\rightarrow\xi$ in $C\left(  \left[
0,T\right]  ;\mathcal{M}_{w}\right)  $ and more precisely there exists a
constant $C>0$ (depending on the $\left\Vert \cdot\right\Vert $-norms of
$\xi_{0},\xi_{0}^{n}$ and on $T$) such that
\[
\sup_{t\in\left[  0,T\right]  }\left\Vert \xi_{t}^{n}-\xi_{t}\right\Vert \leq
C\left\Vert \xi_{0}^{n}-\xi_{0}\right\Vert .
\]

\end{theorem}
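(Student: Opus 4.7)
The plan is to exploit the Lagrangian representation $\xi_t(\theta) = \xi_0(\varphi_\sharp^{t,B(\xi)}\theta)$ established in Section 3 and write, for any Lipschitz test field $\theta$ with $\|\theta\|_\infty + \mathrm{Lip}(\theta) \leq 1$,
\[
\xi_t^n(\theta) - \xi_t(\theta) = (\xi_0^n - \xi_0)\bigl(\varphi_\sharp^{t,B(\xi^n)}\theta\bigr) + \xi_0\bigl(\varphi_\sharp^{t,B(\xi^n)}\theta - \varphi_\sharp^{t,B(\xi)}\theta\bigr).
\]
Since $\|\xi_0^n - \xi_0\| \to 0$ forces $\{|\xi_0^n|_\mathcal{M}\}$ to be bounded (by uniform boundedness, as noted in the paragraph preceding the statement), Lemma \ref{lemma bounded in bounded} confines $\xi^n$ and $\xi$ to a common ball of $\mathcal{M}$ throughout $[0,T]$, so every constant appearing below can be taken uniform in $n$.

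For the first summand, I would bound by the weak distance $\|\xi_0^n - \xi_0\|$ of the initial data, for which I must control both the sup and the Lipschitz constant of $\varphi_\sharp^{t,B(\xi^n)}\theta(x) = D\varphi^{t,B(\xi^n)}(x)^T\theta(\varphi^{t,B(\xi^n)}(x))$. The standard two-term decomposition shows the sup is handled by \eqref{Dphi} and the Lipschitz constant by \eqref{Dphi} together with \eqref{Dphi_Lipschitz}, giving a bound $\leq C_1\|\xi_0^n - \xi_0\|$. For the second summand, I would pair $\xi_0$ in its strong norm against
\[
\bigl\|\varphi_\sharp^{t,B(\xi^n)}\theta - \varphi_\sharp^{t,B(\xi)}\theta\bigr\|_\infty \leq \bigl\|D\varphi^{t,B(\xi^n)} - D\varphi^{t,B(\xi)}\bigr\|_\infty \|\theta\|_\infty + \bigl\|D\varphi^{t,B(\xi)}\bigr\|_\infty^2 \mathrm{Lip}(\theta)\bigl\|\varphi^{t,B(\xi^n)} - \varphi^{t,B(\xi)}\bigr\|_\infty
\]
and invoke \eqref{Lip-phi} and \eqref{Lip-Dphi}. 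Re-examining the Grönwall step inside the proof of Lemma \ref{flow}, both of those right-hand sides can be refined to the integral form $C\int_0^t \|\xi_s^n - \xi_s\|\,ds$ instead of the coarser $C\|\xi^n - \xi\|_T$ displayed there, which is what allows one to close a Grönwall argument rather than iterating short intervals.

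Collecting both estimates and taking the supremum over admissible $\theta$ yields
\[
\|\xi_t^n - \xi_t\| \leq C_1 \|\xi_0^n - \xi_0\| + C_2 \int_0^t \|\xi_s^n - \xi_s\|\,ds,
\]
and Grönwall's lemma delivers the asserted bound. The main subtlety is the asymmetric interplay of the two topologies on $\mathcal{M}$: the right-hand side must depend only on the weak distance of the initial data, which forces $\varphi_\sharp^{t,B(\xi^n)}\theta$ to be treated as a genuine Lipschitz test field, not merely a bounded one. This is precisely why the spatial Lipschitz estimate \eqref{Dphi_Lipschitz} on $D\varphi^{t,B(\xi)}$ --- which plays no role in the contraction argument of Theorem \ref{Thm existence and uniqueness} --- was singled out in Lemma \ref{flow}.
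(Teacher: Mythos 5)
Your proposal is essentially the paper's own proof: the same Lagrangian representation, the same two-term split with the asymmetric weak/strong pairing (weak norm of $\xi_0^n-\xi_0$ against a Lipschitz push-forward test field, strong $\mathcal{M}$-norm against the flow difference), and the same reliance on (\ref{Dphi}), (\ref{Lip-phi}), (\ref{Lip-Dphi}), and (\ref{Dphi_Lipschitz}) together with the uniform-in-$n$ bounds from the existence proof. The only cosmetic differences are the symmetric swap of which flow ($\varphi^{t,B(\xi^n)}$ vs.\ $\varphi^{t,B(\xi)}$) and which current ($\xi_0$ vs.\ $\xi_0^n$) carries each term, and that you close with a single Gr\"onwall step after refining Lemma \ref{flow} to integral form, whereas the paper instead iterates the resulting inequality over finitely many short subintervals $[kT_0,(k+1)T_0]$; both are valid and give the stated conclusion.
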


\begin{proof}
\textbf{Step 1}. There exists a constant $C_{0}>0$ such that
\begin{equation}
\left\Vert \xi\right\Vert _{T}\leq C_{0},\qquad\left\Vert \xi^{n}\right\Vert
_{T}\leq C_{0} \label{uniform inequalities}%
\end{equation}
uniformly in $n\in\mathbb{N}$. Indeed, the time $T$ can be reached in a finite
number of small steps $T_{R}$ related to the contraction principle, namely the
application of Theorem \ref{Thm existence and uniqueness}. On each small
interval the uniform-in-time $\left\Vert .\right\Vert $-norm is controlled by
the $\left\Vert .\right\Vert $-norm of the initial condition of that time
interval. The inequalities (\ref{uniform inequalities}) readily follow.
Finally, from (\ref{estimate between norms}) it follows also%
\begin{equation}
\sup_{t\in\left[  0,T\right]  }\left\vert \xi_{t}\right\vert _{\mathcal{M}%
}\leq C_{0}^{\prime},\qquad\sup_{t\in\left[  0,T\right]  }\left\vert \xi
_{t}^{n}\right\vert _{\mathcal{M}}\leq C_{0}^{\prime}
\label{uniform inequalities 2}%
\end{equation}
for some $C_{0}^{\prime}>0$.

\textbf{Step 2}.

We have%
\begin{align*}
\left\vert \xi_{t}\left(  \theta\right)  -\xi_{t}^{n}\left(  \theta\right)
\right\vert  &  =\left\vert \varphi_{\sharp}^{t,B(\xi)}\xi_{0}\left(
\theta\right)  -\varphi_{\sharp}^{t,B(\xi^{n})}\xi_{0}^{n}\left(
\theta\right)  \right\vert \\
&  \leq\left\vert \varphi_{\sharp}^{t,B(\xi)}\xi_{0}\left(  \theta\right)
-\varphi_{\sharp}^{t,B(\xi)}\xi_{0}^{n}\left(  \theta\right)  \right\vert
+\left\vert \varphi_{\sharp}^{t,B(\xi)}\xi_{0}^{n}\left(  \theta\right)
-\varphi_{\sharp}^{t,B(\xi^{n})}\xi_{0}^{n}\left(  \theta\right)  \right\vert
\\
&  =\left\vert \left(  \xi_{0}-\xi_{0}^{n}\right)  \left(  \varphi_{\sharp
}^{t,B(\xi)}\theta\right)  \right\vert +\left\vert \xi_{0}^{n}\left(
\varphi_{\sharp}^{t,B(\xi)}\theta-\varphi_{\sharp}^{t,B(\xi^{n})}%
\theta\right)  \right\vert \\
&  \leq\left\Vert \xi_{0}-\xi_{0}^{n}\right\Vert \left(  \Vert\varphi_{\sharp
}^{t,B(\xi)}\theta\Vert_{\infty}+\text{Lip}(\varphi_{\sharp}^{t,B(\xi)}%
\theta)\right)  +\left\vert \xi_{0}^{n}\right\vert _{\mathcal{M}}\Vert
\varphi_{\sharp}^{t,B(\xi)}\theta-\varphi_{\sharp}^{t,B(\xi^{n})}\theta
\Vert_{\infty}.
\end{align*}
Now, from (\ref{uniform inequalities}), (\ref{Dphi}) and (\ref{Dphi_Lipschitz}%
) there exist $C_{11},C_{12}>0$ such that
\[
\Vert\varphi_{\sharp}^{t,B(\xi)}\theta\Vert_{\infty}=\Vert D\varphi^{t,B(\xi
)}(\cdot)^{T}\theta(\varphi^{t,B(\xi)}(\cdot))\Vert_{\infty}\leq\Vert
D\varphi^{t,B(\xi)}\Vert_{\infty}\Vert\theta\Vert_{\infty}\leq C_{11}%
\]%
\[
\text{Lip}(\varphi_{\sharp}^{t,B(\xi)}\theta)\leq\Vert D^{2}\varphi^{t,B(\xi
)}(\cdot)\Vert_{\infty}\Vert\theta\Vert_{\infty}+\Vert D\varphi^{t,B(\xi
)}\Vert_{\infty}\text{Lip}(\theta)\leq C_{12}.
\]
Moreover,%
\begin{align*}
\Vert\varphi_{\sharp}^{t,B(\xi)}\theta-\varphi_{\sharp}^{t,B(\xi^{n})}%
\theta\Vert_{\infty}  &  =\Vert D\varphi^{t,B(\xi)}(\cdot)^{T}\theta
(\varphi^{t,B(\xi)}(\cdot))-D\varphi^{t,B(\xi^{n})}(\cdot)^{T}\theta
(\varphi^{t,B(\xi^{n})}(\cdot))\Vert_{\infty}\\
&  \leq\Vert D\varphi^{t,B(\xi)}(\cdot)^{T}\theta(\varphi^{t,B(\xi)}%
(\cdot))-D\varphi^{t,B(\xi)}(\cdot)^{T}\theta(\varphi^{t,B(\xi^{n})}%
(\cdot))\Vert_{\infty}\\
&  +\Vert D\varphi^{t,B(\xi)}(\cdot)^{T}\theta(\varphi^{t,B(\xi^{n})}%
(\cdot))-D\varphi^{t,B(\xi^{n})}(\cdot)^{T}\theta(\varphi^{t,B(\xi^{n})}%
(\cdot))\Vert_{\infty}%
\end{align*}%
\begin{align*}
&  \leq\Vert D\varphi^{t,B(\xi)}\Vert_{\infty}\text{Lip}(\theta)\Vert
\varphi^{t,B(\xi)}-\varphi^{t,B(\xi^{n})}\Vert_{\infty}+\Vert D\varphi
^{t,B(\xi)}-D\varphi^{t,B(\xi^{n})}\Vert_{\infty}\Vert\theta\Vert_{\infty}\\
&  \leq\Vert D\varphi^{t,B(\xi)}\Vert_{\infty}\Vert\varphi^{t,B(\xi)}%
-\varphi^{t,B(\xi n)}\Vert_{\infty}+\Vert D\varphi^{t,B(\xi)}-D\varphi
^{t,B(\xi^{n})}\Vert_{\infty}\\
&  \leq te^{C_{B}T(\left\Vert \xi\right\Vert _{T}+1)}C_{B}e^{C_{B}T(\left\Vert
\xi\right\Vert _{T}+1)}\Vert\xi-\xi^{n}\Vert_{t}\\
&  +tC_{B}e^{C_{B}\left(  T+1\right)  (\left\Vert \xi\right\Vert
_{T}+\left\Vert \xi^{n}\right\Vert _{T}+2)}\left(  1+C_{B}T(\left\Vert
\xi\right\Vert _{T}+1)e^{C_{B}T(\left\Vert \xi\right\Vert _{T}+1)}\right)
\Vert\xi-\xi^{n}\Vert_{t}.
\end{align*}
Thus there exists $C_{13}>0$ such that%
\[
\Vert\varphi_{\sharp}^{t,B(\xi)}\theta-\varphi_{\sharp}^{t,B(\xi^{n})}%
\theta\Vert_{\infty}\leq tC_{13}\Vert\xi-\xi^{n}\Vert_{t}.
\]
Collecting these bounds, for every $T_{0}\in\left[  0,T\right]  $ we get
\[
\left\Vert \xi-\xi^{n}\right\Vert _{T_{0}}\leq\left\Vert \xi_{0}-\xi_{0}%
^{n}\right\Vert \left(  C_{11}+C_{12}\right)  +T_{0}C_{13}\left\vert \xi
_{0}^{n}\right\vert _{\mathcal{M}}\Vert\xi-\xi^{n}\Vert_{T_{0}}.
\]
This proves the theorem if $T_{0}$ is small enough, say $T_{0}\leq\frac
{1}{2C_{13}C_{0}^{\prime}}$;\ but the constant $2C_{13}C_{0}^{\prime}$ does
not vary when we repeat the argument on the interval $\left[  T_{0}%
,2T_{0}\right]  $ and so on (until we cover $\left[  0,T\right]  $) and thus
in a finite number of steps we get the result on $\left[  0,T\right]  $.
\end{proof}

\section{Interacting filaments and their mean field limit}

\subsection{Interacting filaments as current dynamics}

Let $K\in\mathcal{U}C_{b}^{3}\left(  \mathbb{R}^{d},\mathbb{R}^{d\times
d}\right)  $ be given. An example of $K$ which heuristically motivates the
investigation of filaments done here can be, in $d=3$, a smooth approximation
of Biot-Savart kernel. Consider a set of $N$ curves in $\mathbb{R}^{d}$,
$\gamma_{t}^{i,N}\left(  \sigma\right)  $, parametrized by $\sigma\in\left[
0,1\right]  $, time dependent, which satisfy the equations%

\begin{align}
\frac{\partial}{\partial t}\gamma_{t}^{i,N}\left(  \sigma\right)   &
=\sum_{j=1}^{N}\alpha_{j}^{N}\int_{0}^{1}K\left(  \gamma_{t}^{i,N}\left(
\sigma\right)  -\gamma_{t}^{j,N}\left(  \sigma^{\prime}\right)  \right)
\frac{\partial}{\partial\sigma^{\prime}}\gamma_{t}^{j,N}\left(  \sigma
^{\prime}\right)  d\sigma^{\prime}\label{curve eq}\\
\gamma_{0}^{i,N}\text{ given, }i  &  =1,...,N\nonumber
\end{align}
for some sequence of weights $\left\{  \alpha_{j}^{N}\right\}  $. This
dynamics of curves can be reformulated as a dynamics of currents of the form
\begin{equation}
\xi_{t}=\varphi_{\sharp}^{t,K\ast\xi}\xi_{0} \label{flow of currents}%
\end{equation}
for a suitable $\xi_{0}$. Let us explain this reformulation.

To the family of curves we associate the current $\xi_{t}:C_{b}\left(
\mathbb{R}^{d};\mathbb{R}^{d}\right)  \rightarrow\mathbb{R}$ defined as%

\begin{equation}
\xi_{t}\left(  \theta\right)  :=\sum_{j=1}^{N}\alpha_{j}^{N}\int_{0}^{1}%
\theta\left(  \gamma_{t}^{j,N}\left(  \sigma\right)  \right)  \frac{\partial
}{\partial\sigma}\gamma_{t}^{j,N}\left(  \sigma\right)  d\sigma
\label{def curr}%
\end{equation}
or more formally, in the vein of empirical measures,%

\begin{equation}
\xi_{t}=\sum_{j=1}^{N}\alpha_{j}^{N}\int_{0}^{1}\delta_{\gamma_{t}%
^{j,N}\left(  \sigma\right)  }\frac{\partial}{\partial\sigma}\gamma_{t}%
^{j,N}\left(  \sigma\right)  d\sigma. \label{current of a set of curves}%
\end{equation}

\begin{theorem}
\label{thm curves as currents}If $\left\{  \gamma_{t}^{i,N}\left(
\sigma\right)  ,i=1,...,N,t\in\left[  0,T\right]  ,\sigma\in\left[
0,1\right]  \right\}  $ is a family of $C^{1}\left(  \left[  0,T\right]
\times\left[  0,1\right]  ;\mathbb{R}^{d}\right)  $ functions which satisfies
the identities (\ref{curve eq}), then the current $\xi_{t}:C_{b}\left(
\mathbb{R}^{d};\mathbb{R}^{d}\right)  \rightarrow\mathbb{R}$, $t\in\left[
0,T\right]  $, defined by (\ref{def curr}) satisfies equation
(\ref{flow of currents}). Conversely, if $\xi\in C\left(  \left[  0,T\right]
;\mathcal{M}_{w}\right)  $ is the unique current-valued solution of equation
(\ref{flow of currents}) with $\xi_{0}$ defined by (\ref{def curr}) (for
$t=0$) with respect to a given family of $C^{1}$ initial curves $\left\{
\gamma_{0}^{i,N}\left(  \sigma\right)  ,i=1,...,N,\sigma\in\left[  0,1\right]
\right\}  $, then the representation (\ref{def curr}) holds where $\gamma
_{t}^{i,N}\left(  \sigma\right)  $ is defined as%
\begin{equation}
\gamma_{t}^{i,N}\left(  \sigma\right)  =\varphi^{t.K\ast\xi}\left(  \gamma
_{0}^{i,N}\left(  \sigma\right)  \right)  \label{def gamma t}%
\end{equation}
and the curves $\gamma_{t}^{i,N}\left(  \sigma\right)  $ satisfy the
identities (\ref{curve eq}).
\end{theorem}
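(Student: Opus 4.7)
The key observation, which both directions rest on, is that for any current of the form (\ref{def curr}) the vector field $K\ast\xi_t$ admits the explicit representation
\[
(K\ast\xi_t)(x) = \sum_{j=1}^N \alpha_j^N \int_0^1 K\bigl(x - \gamma_t^{j,N}(\sigma')\bigr)\frac{\partial}{\partial\sigma'}\gamma_t^{j,N}(\sigma')\,d\sigma',
\]
as one checks component-wise from the definition (\ref{def convol}) applied to the test field $z\mapsto K_{i\cdot}(x-z)$. In particular, evaluating at $x=\gamma_t^{i,N}(\sigma)$ one sees that the right-hand side of (\ref{curve eq}) coincides with $(K\ast\xi_t)(\gamma_t^{i,N}(\sigma))$, so the interacting-curve system (\ref{curve eq}) is nothing but the characteristic ODE for the time-dependent drift $K\ast\xi_t$. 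This identification is the engine of the whole proof.

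For the direct implication, start with $C^1$ curves satisfying (\ref{curve eq}), and define $\xi_t$ by (\ref{def curr}). The key observation gives $\partial_t \gamma_t^{i,N}(\sigma) = (K\ast\xi_t)(\gamma_t^{i,N}(\sigma))$; since $K\ast\xi \in C([0,T];C_b^2(\mathbb{R}^d,\mathbb{R}^d))$ by Lemma \ref{lemma on K}, the uniqueness of the associated flow yields $\gamma_t^{i,N}(\sigma) = \varphi^{t,K\ast\xi}(\gamma_0^{i,N}(\sigma))$. Using $\partial_\sigma\gamma_t^{j,N}(\sigma) = D\varphi^{t,K\ast\xi}(\gamma_0^{j,N}(\sigma))\,\partial_\sigma\gamma_0^{j,N}(\sigma)$ one then computes, for any test field $\theta$,
\[
\xi_t(\theta) = \sum_j \alpha_j^N \int_0^1 \bigl\langle (\varphi_\sharp^{t,K\ast\xi}\theta)(\gamma_0^{j,N}(\sigma)),\, \partial_\sigma\gamma_0^{j,N}(\sigma)\bigr\rangle\,d\sigma = \xi_0(\varphi_\sharp^{t,K\ast\xi}\theta) = (\varphi_\sharp^{t,K\ast\xi}\xi_0)(\theta),
\]
which is precisely (\ref{flow of currents}). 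This is exactly the computation in the Remark of Section 2, applied term-by-term in the finite sum (\ref{current of a set of curves}).

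For the converse, let $\xi\in C([0,T];\mathcal{M}_w)$ be the unique current-valued solution of (\ref{flow of currents}) with $\xi_0$ given by (\ref{def curr}) at $t=0$, and define $\gamma_t^{i,N}$ by (\ref{def gamma t}). The Remark of Section 2, applied to each summand in (\ref{current of a set of curves}) and combined with the linearity of the push-forward operator, shows that $\varphi_\sharp^{t,K\ast\xi}\xi_0$ is exactly the current associated via (\ref{def curr}) to the transformed curves $\gamma_t^{i,N}$; hence $\xi_t$ has the representation (\ref{def curr}). Finally, differentiating (\ref{def gamma t}) in $t$ gives $\partial_t\gamma_t^{i,N}(\sigma) = (K\ast\xi_t)(\gamma_t^{i,N}(\sigma))$, and reading the key observation in the reverse direction turns this into (\ref{curve eq}).

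No serious obstacle is anticipated: the argument is really a careful bookkeeping exercise combining the explicit formula for $K\ast\xi_t$ with the push-forward identification of Section 2. The only delicate point is ensuring that all operations used (the evaluation of a current on the convolution kernel, the computation of $\partial_\sigma$ through the flow, and the linearity of push-forward over the finite sum) are justified under the stated $C^1$ regularity of the curves and the $\mathcal{U}C_b^3$ regularity of $K$; these are covered by Lemma \ref{lemma on K} and the standard smoothness of the flow $\varphi^{t,K\ast\xi}$ established in Lemma \ref{flow}.
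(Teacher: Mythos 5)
Your proof is correct and follows essentially the same route as the paper: you identify $(K\ast\xi_t)(x)$ with the explicit sum over curves so that (\ref{curve eq}) becomes the characteristic ODE for the drift $K\ast\xi_t$, invoke uniqueness of the flow to get $\gamma_t^{i,N}=\varphi^{t,K\ast\xi}(\gamma_0^{i,N})$, and then push the chain rule through the finite sum defining $\xi_t$ to obtain $\xi_t(\theta)=\xi_0(\varphi_\sharp^{t,K\ast\xi}\theta)$, with the converse reversing these steps. The only place you are slightly more explicit than the paper is in spelling out the appeal to flow uniqueness, which the paper leaves implicit; the substance is identical.
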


\begin{proof}
Let us prove the first direction: from the general definition of push-forward,
we get%
\begin{align*}
\left(  \varphi_{\sharp}^{t,K\ast\xi}\xi_{0}\right)  \left(  \theta\right)
&  =\xi_{0}\left(  \varphi_{\sharp}^{t,K\ast\xi}\theta\right)  =\xi_{0}\left(
\left(  D\varphi^{t,K\ast\xi}\right)  ^{T}\theta\circ\varphi^{t,K\ast\xi
}\right) \\
&  =\sum_{j=1}^{N}\alpha_{j}^{N}\int_{0}^{1}\widetilde{\theta}\left(
\gamma_{0}^{j,N}\left(  \sigma\right)  \right)  \frac{\partial}{\partial
\sigma}\gamma_{0}^{j,N}\left(  \sigma\right)  d\sigma
\end{align*}
where%
\[
\widetilde{\theta}=\left(  D\varphi^{t,K\ast\xi}\right)  ^{T}\theta
\circ\varphi^{t,K\ast\xi}%
\]
and we want to prove that this is equal to
\[
\xi_{t}\left(  \theta\right)  =\sum_{j=1}^{N}\alpha_{j}^{N}\int_{0}^{1}%
\theta\left(  \gamma_{t}^{j,N}\left(  \sigma\right)  \right)  \frac{\partial
}{\partial\sigma}\gamma_{t}^{j,N}\left(  \sigma\right)  d\sigma.
\]
Thus it is sufficient to prove%
\[
\int_{0}^{1}\theta\left(  \gamma_{t}^{j,N}\left(  \sigma\right)  \right)
\frac{\partial}{\partial\sigma}\gamma_{t}^{j,N}\left(  \sigma\right)
d\sigma=\int_{0}^{1}\widetilde{\theta}\left(  \gamma_{0}^{j,N}\left(
\sigma\right)  \right)  \frac{\partial}{\partial\sigma}\gamma_{0}^{j,N}\left(
\sigma\right)  d\sigma.
\]
To prove this, notice that
\[
\left(  K\ast\xi_{t}\right)  \left(  x\right)  =\sum_{j=1}^{N}\alpha_{j}%
^{N}\int_{0}^{1}K\left(  x-\gamma_{t}^{j,N}\left(  \sigma\right)  \right)
\frac{\partial}{\partial\sigma}\gamma_{t}^{j,N}\left(  \sigma\right)
d\sigma.
\]
Therefore the equations (\ref{curve eq}) for the interaction of curves can be
rewritten as%
\[
\frac{\partial}{\partial t}\gamma_{t}^{i,N}\left(  \sigma\right)  =\left(
K\ast\xi_{t}\right)  \left(  \gamma_{t}^{i,N}\left(  \sigma\right)  \right)
.
\]
This means that
\[
\gamma_{t}^{i,N}\left(  \sigma\right)  =\varphi^{t.K\ast\xi}\left(  \gamma
_{0}^{i,N}\left(  \sigma\right)  \right)  .
\]
Now, from this fact, we can deduce the identity above. Indeed,
\begin{align*}
\int_{0}^{1}\theta\left(  \gamma_{t}^{j,N}\left(  \sigma\right)  \right)
\frac{\partial}{\partial\sigma}\gamma_{t}^{j,N}\left(  \sigma\right)  d\sigma
&  =\int_{0}^{1}\theta\left(  \varphi^{t.K\ast\xi}\left(  \gamma_{0}%
^{i,N}\left(  \sigma\right)  \right)  \right)  \frac{\partial}{\partial\sigma
}\varphi^{t.K\ast\xi}\left(  \gamma_{0}^{i,N}\left(  \sigma\right)  \right)
d\sigma\\
&  =\int_{0}^{1}\left(  D\varphi^{t.K\ast\xi}\right)  ^{T}\left(  \gamma
_{0}^{i,N}\left(  \sigma\right)  \right)  \left(  \theta\circ\varphi
^{t.K\ast\xi}\right)  \left(  \gamma_{0}^{i,N}\left(  \sigma\right)  \right)
\frac{\partial}{\partial\sigma}\gamma_{0}^{i,N}\left(  \sigma\right)  d\sigma.
\end{align*}

Let us now prove the opposite direction. Let us assume that $\xi_{t}$
satisfies equation (\ref{flow of currents}) with $\xi_{0}$ defined by
(\ref{def curr}) for $t=0$, with respect to a given family of $C^{1}$ initial
curves $\left\{  \gamma_{0}^{i,N}\left(  \sigma\right)  ,i=1,...,N,\sigma
\in\left[  0,1\right]  \right\}  $. Then we have that%
\begin{align*}
\xi_{t}(\theta)  &  =\left(  \varphi^{t.K\ast\xi}\xi_{0}\right)  \left(
\theta\right)  =\xi_{0}\left(  \varphi_{\sharp}^{t,K\ast\xi}\theta\right)
=\xi_{0}\left(  \left(  D\varphi^{t.K\ast\xi}\right)  ^{T}\theta\circ
\varphi^{t.K\ast\xi}\right) \\
&  =\sum_{j=1}^{N}\alpha_{j}^{N}\int_{0}^{1}\left(  D\varphi^{t.K\ast\xi
}\right)  ^{T}\left(  \gamma_{0}^{j,N}(\sigma)\right)  \theta\left(
\varphi^{t.K\ast\xi}\left(  \gamma_{0}^{j,N}(\sigma)\right)  \right)
\frac{\partial}{\partial\sigma}\gamma_{0}^{j,N}\left(  \sigma\right)
d\sigma\\
&  =\sum_{j=1}^{N}\alpha_{j}^{N}\int_{0}^{1}\theta\left(  \varphi^{t.K\ast\xi
}\left(  \gamma_{0}^{j,N}(\sigma)\right)  \right)  \frac{\partial}%
{\partial\sigma}\left(  \varphi^{t.K\ast\xi}\left(  \gamma_{0}^{j,N}%
(\sigma)\right)  \right)  d\sigma
\end{align*}
Let us define $\gamma_{t}^{j,N}(\sigma)$ by (\ref{def gamma t}). Then the
representation (\ref{def curr}) holds. Moreover, from (\ref{def gamma t}) we
have, for each $\sigma$,
\[
\frac{\partial}{\partial t}\gamma_{t}^{i,N}\left(  \sigma\right)  =\left(
K\ast\xi_{t}\right)  \left(  \gamma_{t}^{i,N}\left(  \sigma\right)  \right)
\]
which is precisely (\ref{curve eq}) due to the already established form of
$\xi_{t}$.
\end{proof}

The reformulation above provides first of all an existence and uniqueness result:

\begin{corollary}
Assume $K\in\mathcal{U}C_{b}^{3}\left(  \mathbb{R}^{d},\mathbb{R}^{d\times
d}\right)  $. For every $N\in\mathbb{N}$ and every family of $C^{1}$ curves
$\left\{  \gamma_{0}^{i,N}\left(  \sigma\right)  ,i=1,...,N,\sigma\in\left[
0,1\right]  \right\}  $, there exists a unique maximal solution \\
$\left\{\gamma_{t}^{i,N}\left(  \sigma\right)  ,i=1,...,N,t\in\lbrack0,T_{f}%
),\sigma\in\left[  0,1\right]  \right\}  $ of equations (\ref{curve eq}) in
the class of \\
$C^{1}\left(  [0,T_{f})\times\left[  0,1\right]  ;\mathbb{R}%
^{d}\right)  ^{N}$ functions.
\end{corollary}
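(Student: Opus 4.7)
The plan is to reduce the statement to the already established well-posedness for the flow equation for currents, via the bijection of Theorem \ref{thm curves as currents}. Given the initial curves $\left\{\gamma_{0}^{i,N}\right\}$, I would first build the initial current
\[
\xi_{0}(\theta)=\sum_{j=1}^{N}\alpha_{j}^{N}\int_{0}^{1}\theta(\gamma_{0}^{j,N}(\sigma))\,\frac{\partial}{\partial\sigma}\gamma_{0}^{j,N}(\sigma)\,d\sigma,
\]
and observe that, since each $\gamma_{0}^{j,N}\in C^{1}([0,1];\mathbb{R}^{d})$ with $[0,1]$ compact, $\xi_{0}\in\mathcal{M}$ with $|\xi_{0}|_{\mathcal{M}}\leq\sum_{j}|\alpha_{j}^{N}|\,\|\partial_{\sigma}\gamma_{0}^{j,N}\|_{\infty}<\infty$.

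Next, by Lemma \ref{lemma on K}, the drift $B(\xi):=K\ast\xi$ verifies hypotheses (\ref{B1})--(\ref{B3}), so Theorem \ref{Thm existence and uniqueness} together with the maximal solution construction yields a unique maximal current-valued solution $\xi\in C([0,T_{f});\mathcal{M})$ of the flow equation (\ref{flow of currents}). The associated time-dependent vector field $K\ast\xi$ lies in $C([0,T_{f});C_{b}^{2}(\mathbb{R}^{d},\mathbb{R}^{d}))$ by (\ref{B1}), so the flow $\varphi^{t,K\ast\xi}$ is well defined and of class $C^{2}$ in $x$ on $[0,T_{f})$. I then define
\[
\gamma_{t}^{i,N}(\sigma):=\varphi^{t,K\ast\xi}\bigl(\gamma_{0}^{i,N}(\sigma)\bigr),
\]
and invoke the converse direction of Theorem \ref{thm curves as currents} to conclude that these curves satisfy the equations (\ref{curve eq}). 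Since $\gamma_{0}^{i,N}\in C^{1}$ and the flow is jointly $C^{1}$ in $(t,x)$ on $[0,T_{f})\times\mathbb{R}^{d}$, the family $\gamma_{t}^{i,N}(\sigma)$ belongs to $C^{1}([0,T_{f})\times[0,1];\mathbb{R}^{d})$.

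For uniqueness, suppose $\{\widetilde{\gamma}_{t}^{i,N}\}\in C^{1}([0,T']\times[0,1];\mathbb{R}^{d})^{N}$ is another solution of (\ref{curve eq}) with the same initial data. The forward direction of Theorem \ref{thm curves as currents} shows that the associated current $\widetilde{\xi}_{t}$ solves the flow equation (\ref{flow of currents}) on $[0,T']$ with the same $\xi_{0}$. By uniqueness of the maximal current solution we must have $T'\leq T_{f}$ and $\widetilde{\xi}\equiv\xi$ on $[0,T']$, so $\widetilde{\gamma}_{t}^{i,N}(\sigma)$ solves the same non-autonomous ODE $\partial_{t}x_{t}=(K\ast\xi_{t})(x_{t})$ with the same initial point as $\gamma_{t}^{i,N}(\sigma)$; classical ODE uniqueness then gives $\widetilde{\gamma}_{t}^{i,N}(\sigma)=\gamma_{t}^{i,N}(\sigma)$.

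The only delicate point I expect is matching the two notions of maximality. If the curve family could be $C^{1}$-extended past $T_{f}<\infty$, then the current built from the extension via (\ref{def curr}) would provide a current-valued extension of $\xi$ past $T_{f}$, contradicting the maximality provided by Theorem \ref{thm maximal}; conversely, as long as the curves exist in $C^{1}$, the current they define is in $\mathcal{M}$ and equals $\varphi_{\sharp}^{t,K\ast\xi}\xi_{0}$, so the interval of existence of the curves cannot exceed that of $\xi$. This double implication pins down $T_{f}$ as the common maximal time, completing the proof.
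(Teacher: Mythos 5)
Your proposal is correct and follows exactly the route the paper intends: the paper states this corollary without proof immediately after Theorem \ref{thm curves as currents}, with the remark that ``the reformulation above provides first of all an existence and uniqueness result,'' and your argument simply spells out that reformulation (current-level well-posedness via Lemma \ref{lemma on K} and Theorem \ref{Thm existence and uniqueness}, translation back and forth via the two directions of Theorem \ref{thm curves as currents}, and the matching of maximal intervals). Your careful treatment of uniqueness and of the equality of the two maximal times is a welcome elaboration of what the paper leaves implicit, and contains no gaps.
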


\subsection{Mean field result\label{subsection mean field}}

The next theorem proves two important results: first, if a family of initial
curves approximates a current at time $t=0$, then the solutions of the
filament equations converge to the solution of the vector valued PDE. The
second, related result is that each curve of the family becomes, in the limit
$N\rightarrow\infty$, closer and closer to the solutions $\overline{\gamma
}_{t}^{i,N}\left(  \sigma\right)  $ of equation (\ref{filament and mean field}%
), precisely%
\begin{align}
\frac{\partial}{\partial t}\overline{\gamma}_{t}^{i,N}\left(  \sigma\right)
&  =\left(  K\ast\xi_{t}\right)  \left(  \overline{\gamma}_{t}^{i,N}\left(
\sigma\right)  \right) \label{filament and mean field 2}\\
\overline{\gamma}_{0}^{i,N}  &  =\gamma_{0}^{i,N}.\nonumber
\end{align}
This equation describes the interaction of a filament with the mean field
$\xi_{t}$. This is the core of the concept of mean field theory. Notice that,
without further assumptions on the convergence of $\gamma_{0}^{i,N}$ (that we
do not assume, since a typical example is the case of random independent
initial conditions), it is not true that $\overline{\gamma}_{t}^{i,N}$
converges. The second part of the next theorem only claims that $\gamma
_{t}^{i,N}$\ and $\overline{\gamma}_{t}^{i,N}$ are close to each other.

As a technical remark, notice that if $\xi_{t}$ exists on a time interval
$\left[  0,T\right]  $, then $K\ast\xi_{t}$ satisfies the regularity
conditions of Lemma \ref{lemma on K} and therefore there exists a unique
time-dependent $C^{1}$-curve $\overline{\gamma}_{t}^{i,N}$ (for each $i,N$),
solution of equation (\ref{filament and mean field 2}).

\begin{theorem}
Let, for every $N\in\mathbb{N}$, $\left\{  \gamma_{0}^{i,N}\left(
\sigma\right)  ,i=1,...,N,\sigma\in\left[  0,1\right]  \right\}  $ be a family
of $C^{1}$ curves. Assume that the associated currents at time zero%
\begin{equation}
\xi_{0}^{N}=\sum_{j=1}^{N}\alpha_{j}^{N}\int_{0}^{1}\delta_{\gamma_{0}%
^{j,N}\left(  \sigma\right)  }\frac{\partial}{\partial\sigma}\gamma_{0}%
^{j,N}\left(  \sigma\right)  d\sigma\label{initial current}%
\end{equation}
converge weakly to a current $\xi_{0}\in\mathcal{M}$. Let $T>0$ be any time
such that on $\left[  0,T\right]  $ there are unique current-valued solutions
$\xi_{t}^{N}$ and $\xi_{t}$ to equation (\ref{flow of currents}) with respect
to the initial conditions $\xi_{0}^{N}$ or $\xi_{0}$; notice that such a time
exists because the initial currents $\xi_{0}^{N}$ and $\xi_{0}$ are
equibounded (since they converge weakly); moreover, notice that $\xi_{t}^{N}$
has the form%
\[
\xi_{t}^{N}=\sum_{j=1}^{N}\alpha_{j}^{N}\int_{0}^{1}\delta_{\gamma_{t}%
^{j,N}\left(  \sigma\right)  }\frac{\partial}{\partial\sigma}\gamma_{t}%
^{j,N}\left(  \sigma\right)  d\sigma
\]
corresponding to curve-solutions to equation (\ref{curve eq}) and that
$\xi_{t}$ is the unique solution to the vector-valued PDE (\ref{eq pde}). Let
$\overline{\gamma}_{t}^{i,N}$ be the solution to the mean field equation
(\ref{filament and mean field 2}).

Then:

i) the currents $\xi^{N}$ converge in $C\left(  \left[  0,T\right]
;\mathcal{M}_{w}\right)  $ to the current $\xi$.

ii) $\lim_{N\rightarrow\infty}\sup_{\left(  t,\sigma\right)  \in\left[
0,T\right]  \times\left[  0,1\right]  }\left\vert \gamma_{t}^{i,N}\left(
\sigma\right)  -\overline{\gamma}_{t}^{i,N}\left(  \sigma\right)  \right\vert
=0$.
\end{theorem}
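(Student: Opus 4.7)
The plan is to deduce both parts as corollaries of the continuous dependence Theorem~\ref{thm cont dep} and the Lagrangian representation of Theorem~\ref{thm curves as currents}, using the fact that the drift $B(\xi) = K \ast \xi$ satisfies the assumptions (\ref{B1})--(\ref{B3}) by Lemma~\ref{lemma on K}. The key observation is that $\xi^N$ and $\xi$ both solve the flow equation (\ref{flow of currents}), only with different initial data, and that weak convergence in $\mathcal{M}$ is exactly convergence in the metric $d$ (equivalently in the norm $\|\cdot\|$), as recalled in Section~\ref{subsect generalities on currents}.

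For part (i), translate the hypothesis ``$\xi_0^N \to \xi_0$ weakly'' into $\|\xi_0^N - \xi_0\| \to 0$. Since weakly convergent sequences are bounded in $|\cdot|_{\mathcal{M}}$, the common existence time $T$ is uniform in $N$, and the hypotheses of Theorem~\ref{thm cont dep} are satisfied on $[0,T]$. That theorem then yields the quantitative bound $\sup_{t\in[0,T]} \|\xi_t^N - \xi_t\| \leq C \|\xi_0^N - \xi_0\|$, which gives convergence of $\xi^N$ to $\xi$ in $C([0,T]; \mathcal{M}_w)$.

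For part (ii), represent both curves as images of the flow acting on the common initial curve $\gamma_0^{i,N}$. By the second half of Theorem~\ref{thm curves as currents},
\[
\gamma_t^{i,N}(\sigma) = \varphi^{t, K\ast\xi^N}\bigl(\gamma_0^{i,N}(\sigma)\bigr),
\]
and by definition of $\overline{\gamma}_t^{i,N}$ as the solution of the ODE (\ref{filament and mean field 2}) starting from the same point,
\[
\overline{\gamma}_t^{i,N}(\sigma) = \varphi^{t, K\ast\xi}\bigl(\gamma_0^{i,N}(\sigma)\bigr).
\]
Subtracting and applying the estimate (\ref{Lip-phi}) of Lemma~\ref{flow} with $B = K\ast\,\cdot\,$, we obtain
\[
\sup_{(t,\sigma) \in [0,T]\times[0,1]} \bigl|\gamma_t^{i,N}(\sigma) - \overline{\gamma}_t^{i,N}(\sigma)\bigr| \leq \|\varphi^{\cdot, B(\xi^N)} - \varphi^{\cdot, B(\xi)}\|_\infty \leq C_B T \, e^{C_B T(\|\xi^N\|_T + 1)}\, \|\xi^N - \xi\|_T.
\]
A uniform-in-$N$ bound on $\|\xi^N\|_T$ is available from Step~1 of the proof of Theorem~\ref{thm cont dep} (which is precisely the statement that the norms of solutions stay uniformly controlled on the common time interval, because the contraction-principle time $T_R$ depends only on $R = 2|\xi_0|_{\mathcal{M}}$ and the initial currents $\xi_0^N$ are equibounded). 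Combining this uniform bound with the conclusion of part (i), the right-hand side tends to $0$ as $N \to \infty$.

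There is no serious obstacle: the mean field theorem is structurally a corollary of the well-posedness and stability results already established for the abstract flow equation, once one exploits the Lagrangian representation that identifies the filaments with the characteristic lines of $K \ast \xi^N$ and $K \ast \xi$. The only point requiring a small amount of care is confirming that the $\|\cdot\|$-norms of $\xi^N$ remain uniformly bounded on the fixed interval $[0,T]$, so that the exponential constant in (\ref{Lip-phi}) does not blow up with $N$; this is handled by the same iteration-over-short-time-steps argument used in Step~1 of Theorem~\ref{thm cont dep}.
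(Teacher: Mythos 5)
Your part (i) matches the paper exactly: both are immediate from Theorem~\ref{thm cont dep}.

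For part (ii) your route is correct, but genuinely different from the paper's. The paper does not invoke (\ref{Lip-phi}) at all; instead it works directly with the integral form of the two ODEs for $\gamma_t^{i,N}$ and $\overline{\gamma}_t^{i,N}$, splits
\[
\bigl|(K\ast\xi_s^N)(\gamma_s^{i,N}) - (K\ast\xi_s)(\overline{\gamma}_s^{i,N})\bigr|
\leq \bigl|(K\ast\xi_s^N)(\gamma_s^{i,N}) - (K\ast\xi_s^N)(\overline{\gamma}_s^{i,N})\bigr|
+ \bigl|(K\ast\xi_s^N)(\overline{\gamma}_s^{i,N}) - (K\ast\xi_s)(\overline{\gamma}_s^{i,N})\bigr|,
\]
estimates the first term by the Lipschitz bound on $K\ast\xi_s^N$ and the second by $\|\xi_s^N - \xi_s\|$ via (\ref{bound on convolution}), then runs Gronwall explicitly. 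You instead observe that both $\gamma_t^{i,N}$ and $\overline{\gamma}_t^{i,N}$ are images of the \emph{same} initial point $\gamma_0^{i,N}(\sigma)$ under the flows $\varphi^{t,K\ast\xi^N}$ and $\varphi^{t,K\ast\xi}$ respectively (the first by Theorem~\ref{thm curves as currents}, the second by definition of (\ref{filament and mean field 2})), so the deviation is bounded by $\|\varphi^{t,B(\xi^N)} - \varphi^{t,B(\xi)}\|_\infty$, to which (\ref{Lip-phi}) applies. This is arguably cleaner and more modular --- it reuses the already-proved flow estimate rather than re-deriving its Gronwall argument at the curve level --- at the price of having to notice the coincidence of starting points and to check that the constant in (\ref{Lip-phi}) stays uniform in $N$, which you correctly handle by citing the uniform bound on $\|\xi^N\|_T$ from Step~1 of Theorem~\ref{thm cont dep}. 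Both proofs are valid; yours makes the structural dependence on the flow lemma explicit, while the paper's self-contained Gronwall keeps the estimate local and elementary.
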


\begin{proof}
Part (i) is a straightforward consequence of Theorem \ref{thm cont dep} on the
continuous dependence on initial conditions.

As to part (ii), denoting as above by $\xi_{t}^{N},\overline{\xi}_{t}^{N}$
respectively the currents associated to the families $\gamma_{t}%
^{i,N},\overline{\gamma}_{t}^{i,N}$ (see (\ref{def curr})), we have
\begin{align*}
\left\vert \gamma_{t}^{i,N}\left(  \sigma\right)  -\overline{\gamma}_{t}%
^{i,N}\left(  \sigma\right)  \right\vert  &  \leq\int_{0}^{t}\left\vert
(K\ast\xi_{s}^{N})(\gamma_{s}^{i,N}\left(  \sigma\right)  )-(K\ast\xi
_{s})(\overline{\gamma}_{s}^{i,N}\left(  \sigma\right)  )\right\vert ds\\
&  \leq\int_{0}^{t}\left\vert (K\ast\xi_{s}^{N})(\gamma_{s}^{i,N}\left(
\sigma\right)  )-(K\ast\xi_{s}^{N})(\overline{\gamma}_{s}^{i,N}\left(
\sigma\right)  )\right\vert ds\\
&  +\int_{0}^{t}\left\vert (K\ast\xi_{s}^{N})(\overline{\gamma}_{s}%
^{i,N}\left(  \sigma\right)  )-(K\ast\xi_{s})(\overline{\gamma}_{s}%
^{i,N}\left(  \sigma\right)  )\right\vert ds.
\end{align*}
From two of the properties of "$K\ast\xi$" proved in Lemma \ref{lemma on K},
we have (taking also the supremum in $\sigma\in\left[  0,1\right]  $)%
\begin{align*}
\left\vert \gamma_{t}^{i,N}\left(  \sigma\right)  -\overline{\gamma}_{t}%
^{i,N}\left(  \sigma\right)  \right\vert  &  \leq C\int_{0}^{t}\left\Vert
DK\ast\xi_{s}^{N}\right\Vert _{\infty}\left\vert \gamma_{s}^{i,N}\left(
\sigma\right)  -\overline{\gamma}_{s}^{i,N}\left(  \sigma\right)  \right\vert
ds+C\int_{0}^{t}\left\Vert \xi_{s}^{N}-\xi_{s}\right\Vert ds\\
&  \leq C\int_{0}^{t}\left\vert \gamma_{s}^{i,N}\left(  \sigma\right)
-\overline{\gamma}_{s}^{i,N}\left(  \sigma\right)  \right\vert ds+C\int%
_{0}^{t}\left\Vert \xi_{s}^{N}-\xi_{s}\right\Vert ds
\end{align*}
where we have denoted a generic constant by $C$ and we have used that
$\sup_{t\in\left[  0,T\right]  }\left\Vert \xi_{s}^{N}\right\Vert <\infty$, as
we know from the first part of the proof (e.g. since they converge in
$C\left(  \left[  0,T\right]  ;\mathcal{M}_{w}\right)  $). We also know, from
the first part, that $\overline{\xi}^{N}\rightarrow\xi$ in in $C\left(
\left[  0,T\right]  ;\mathcal{M}_{w}\right)  $. Then it is sufficient to apply
Gronwall's Lemma to obtain the claim of part (ii).
\end{proof}

Sometimes one has a probabilistic framework of the following kind.\ We have a
filtered probability space $(\Omega,(\mathcal{F}_{t})_{t\geq0},\mathcal{F}%
,\mathbb{P})$ and the separable Banach space $\mathcal{C}=C\left(  \left[
0,1\right]  ;\mathbb{R}^{d}\right)  $ with the Borel $\sigma$-algebra
$\mathcal{B}\left(  \mathcal{C}\right)  $; we call \textit{random curve} in
$\mathbb{R}^{d}$ any measurable map from $(\Omega,\mathcal{F},\mathbb{P})$ to
$\left(  \mathcal{C},\mathcal{B}\left(  \mathcal{C}\right)  \right)  $. We use
the notation $\gamma$ also for a random curve. The image measure $\mu$ of such
map is the \textit{law of the random curve}. It is a probability measure on
$\left(  \mathcal{C},\mathcal{B}\left(  \mathcal{C}\right)  \right)  $.

Then consider, for every $N\in\mathbb{N}$, a family $\left\{  \gamma_{0}%
^{i,N},i=1,...,N\right\}  $ of random curves and consider the associated
currents $\xi_{0}^{N}$ defined as in (\ref{initial current}), which now are
random currents, namely measurable mappings from $\left(  \Omega,F,P\right)  $
to the space $\mathcal{M}$ endowed with its Borel $\sigma$-algebra
$\mathcal{B}\left(  \mathcal{M}\right)  $. Let $\xi_{0}$ be a random current.
For all $\omega\in\Omega$, solve uniquely the flow equation
(\ref{flow of currents}) with the initial conditions $\xi_{0}^{N}\left(
\omega\right)  $ and $\xi_{0}\left(  \omega\right)  $ and call $\xi_{t}%
^{N}\left(  \omega\right)  $ and $\xi_{t}\left(  \omega\right)  $ the
corresponding solutions. Assume that all the the whole family of currents
$\xi_{0}^{N}\left(  \omega\right)  $, $\xi_{0}\left(  \omega\right)  $ when
$N$ and $\omega$ vary, are equibounded. Then take some $T>0$ such that unique
solutions $\xi_{t}^{N}\left(  \omega\right)  $ and $\xi_{t}\left(
\omega\right)  $ exist. For every $t\in\left[  0,T\right]  $, $\xi_{t}^{N}$
and $\xi_{t}$ are random currents (namely they are measurable), by the
continuous dependence on initial conditions, Theorem \ref{thm cont dep}.
Assume that $\xi_{0}^{N}$ converges in probability to $\xi_{0}$. Then it is
easy to show that, for every $t\in\left[  0,T\right]  $, $\xi_{t}^{N}$
converges in probability to $\xi_{t}$, and also that $\left\Vert \xi_{\cdot
}^{N}-\xi_{\cdot}\right\Vert _{T}$ converges in probability to zero.

\subsection{Propagation of chaos}

In this section we assume that the vorticity is the same for each vortex, namely $\alpha_{j}^{N}=\frac{1}{N}$ for every $j\leq N$, to ensure that independence is maintained as $N \to \infty$.

To every curve $\gamma\in C^{1}([0,1],\mathbb{R}^{d})$, we can associate a
current , which will also called $\gamma$ with a slight abuse of notation, in
this way
\[
\gamma(\theta):=\int_{0}^{1}\theta(\gamma(\sigma))\frac{\partial}%
{\partial\sigma}\gamma(\sigma)d\sigma
\]
for $\theta\in C_{b}\left(  \mathbb{R}^{d},\mathbb{R}^{d}\right)  $. The
definition of the tensor product $\gamma\otimes\gamma^{\prime}$ is%
\[
\left(  \gamma\otimes\gamma^{\prime}\right)  \left(  \theta,\theta^{\prime
}\right)  =\gamma\left(  \theta\right)  \gamma^{\prime}\left(  \theta^{\prime
}\right)  .
\]

We fix a filtered probability space $(\Omega,(\mathcal{F}_{t})_{t\geq
0},\mathcal{F},\mathbb{P})$ and, following the notion given in the previous
subsection, we consider random curves. We say that a family $(\gamma
_{i})_{1\leq i\leq N}$ of random curves is symmetric or exchangeable if its
law is independent of permutations of the indexes. We start with the following
general result, for random currents independent of time.

\begin{theorem}
\label{propagation_chaos} Let $\xi$ be a current and, for every $N\in
\mathbb{N}$, let $\gamma^{N}:=(\gamma^{i,N})_{1\leq i\leq N}$ be a symmetric
family of random-$C^{1}([0,1],\mathbb{R}^{d})$ curves. We call $\xi^{N}$ the
empirical measure associated with the family $\gamma^{N}$. Suppose that, for
every $\theta\in C_{b}(\mathbb{R}^{3},\mathbb{R}^{3})$,
\begin{equation}
|\xi|_{\mathcal{M}}<\infty,\qquad|\gamma^{i,N}(\theta)|\leq C\quad
\mathbb{P}-a.s.\label{assu chaos}%
\end{equation}
uniformly in $i,N$ and that
\begin{equation}
\lim_{N\rightarrow\infty}\mathbb{E}\left[  |\xi^{N}(\theta)-\xi(\theta
)|\right]  =0.\label{conv_weak_L1}%
\end{equation}
Then, for every fixed $r\in\mathbb{N}$ and for every family of test functions
$(\theta_{1},\cdots,\theta_{r})\in C_{b}(\mathbb{R}^{d},\mathbb{R}^{d})^{r}$,
it holds%
\[
\lim_{N\rightarrow\infty}\mathbb{E}\left[  \left(  \gamma^{1,N}\otimes
\cdots\otimes\gamma^{ir,N}\right)  (\theta_{1},\cdots,\theta_{r})\right]
=\prod_{i=1}^{r}\xi(\theta_{i}).
\]

\end{theorem}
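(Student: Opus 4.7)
The plan is to exploit symmetry to rewrite $\mathbb{E}[\xi^{N}(\theta_{1})\cdots\xi^{N}(\theta_{r})]$ in terms of the target quantity, then use the $L^{1}$ convergence hypothesis to identify the limit. Concretely, since $\xi^{N}=\frac{1}{N}\sum_{i=1}^{N}\gamma^{i,N}$, I expand
\[
\mathbb{E}\left[\prod_{k=1}^{r}\xi^{N}(\theta_{k})\right]=\frac{1}{N^{r}}\sum_{i_{1},\ldots,i_{r}=1}^{N}\mathbb{E}\left[\prod_{k=1}^{r}\gamma^{i_{k},N}(\theta_{k})\right]
\]
and split the sum into the "distinct" tuples (those with $i_{1},\ldots,i_{r}$ pairwise different) and the "diagonal" tuples (where at least two indices coincide). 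By the symmetry (exchangeability) assumption, every distinct tuple contributes the same value, namely $\mathbb{E}[(\gamma^{1,N}\otimes\cdots\otimes\gamma^{r,N})(\theta_{1},\ldots,\theta_{r})]$. The number of distinct tuples is $N(N-1)\cdots(N-r+1)=N^{r}(1+O(1/N))$, while the number of diagonal tuples is $O(N^{r-1})$.

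Using the uniform bound $|\gamma^{i,N}(\theta_{k})|\leq C$ from \eqref{assu chaos}, each diagonal term is bounded in absolute value by $C^{r}$, so the diagonal contribution is $O(1/N)$. Thus, setting $a_{N}:=\mathbb{E}[(\gamma^{1,N}\otimes\cdots\otimes\gamma^{r,N})(\theta_{1},\ldots,\theta_{r})]$ and $b_{N}:=N(N-1)\cdots(N-r+1)/N^{r}\to1$, I obtain
\[
\mathbb{E}\left[\prod_{k=1}^{r}\xi^{N}(\theta_{k})\right]=b_{N}\,a_{N}+O(1/N).
\]

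The remaining task is to show that the left-hand side converges to $\prod_{k=1}^{r}\xi(\theta_{k})$. For this I use the telescoping identity
\[
\prod_{k=1}^{r}\xi^{N}(\theta_{k})-\prod_{k=1}^{r}\xi(\theta_{k})=\sum_{k=1}^{r}\left(\prod_{j<k}\xi(\theta_{j})\right)\bigl(\xi^{N}(\theta_{k})-\xi(\theta_{k})\bigr)\left(\prod_{j>k}\xi^{N}(\theta_{j})\right),
\]
and bound each factor uniformly: $|\xi(\theta_{j})|\leq|\xi|_{\mathcal{M}}\|\theta_{j}\|_{\infty}$ and $|\xi^{N}(\theta_{j})|\leq C$ by the same averaging argument. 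Taking expectations and using \eqref{conv_weak_L1} on each term, the left-hand side converges in $L^{1}$, hence in expectation, to $\prod_{k}\xi(\theta_{k})$. Combining with the earlier identity and $b_{N}\to1$ (together with the fact that $|a_{N}|\leq C^{r}$ is bounded, so dividing by $b_{N}$ is harmless), I conclude $a_{N}\to\prod_{k=1}^{r}\xi(\theta_{k})$, which is the claim.

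The computation is mostly routine; the main subtle point is isolating the diagonal contribution and noting that it is the uniform bound on $\gamma^{i,N}(\theta)$ (rather than any moment estimate on $\xi^{N}$ alone) which guarantees its $O(1/N)$ decay. The $L^{1}$-to-product passage, which might look like it requires stronger convergence, is in fact elementary because the uniform a.s. bound $|\xi^{N}(\theta)|\leq C$ provides the missing domination in the telescoping estimate.
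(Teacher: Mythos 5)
Your proof is correct and follows essentially the same strategy as the paper: exchangeability plus a diagonal/off-diagonal count to relate $\mathbb{E}[\prod_k\xi^N(\theta_k)]$ to $\mathbb{E}[(\gamma^{1,N}\otimes\cdots\otimes\gamma^{r,N})(\theta_1,\ldots,\theta_r)]$ up to $O(1/N)$, then a telescoping estimate with the uniform a.s.\ bound $|\xi^N(\theta)|\le C$ to pass $\mathbb{E}[\prod_k\xi^N(\theta_k)]\to\prod_k\xi(\theta_k)$ via the hypothesis \eqref{conv_weak_L1}. The only differences are cosmetic: you spell out general $r$ where the paper treats $r=2$ ``without loss of generality,'' and your domination step in the telescoping is written more carefully than the paper's (which, as printed, factors an expectation of a product into a product of expectations — a harmless typo since the a.s.\ bound on $\xi^N(\theta_2)$ is what is really being used).
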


\begin{proof}
Without loss of generality, we prove the theorem in the case $k=2$. For every
$\theta_{1},\theta_{2}$ bounded Lipschitz continuous functions in
$\mathbb{R}^{d}$ we have
\begin{align}
|\mathbb{E}\left[  \gamma^{1,N}(\theta_{1})\gamma^{2,N}(\theta_{2})\right]
-\xi(\theta_{1})\xi(\theta_{2})|\leq &  |\mathbb{E}\left[  \gamma^{1,N}%
(\theta_{1})\gamma^{2,N}(\theta_{2})\right]  -\mathbb{E}\left[  \xi^{N}%
(\theta_{1})\xi^{N}(\theta_{2})\right]  |\label{term filaments}\\
&  +|\mathbb{E}\left[  \xi^{N}(\theta_{1})\xi^{N}(\theta_{2})\right]
-\xi(\theta_{1})\xi(\theta_{2})| \label{term currents}%
\end{align}
The second term, \eqref{term currents}, goes to zero because of
(\ref{assu chaos})-\eqref{conv_weak_L1}:
\begin{align*}
&  \left\vert \mathbb{E}\left[  \xi^{N}(\theta_{1})\xi^{N}(\theta_{2})\right]
-\xi(\theta_{1})\xi(\theta_{2})\right\vert \\
&  \leq\left\vert \mathbb{E}\left[  \left(  \xi^{N}(\theta_{1})-\xi(\theta
_{1})\right)  \xi^{N}(\theta_{2})\right]  \right\vert +\left\vert
\mathbb{E}\left[  \left(  \xi^{N}(\theta_{2})-\xi(\theta_{2})\right)
\xi(\theta_{1})\right]  \right\vert \\
&  \leq\mathbb{E}\left[ \vert\xi^{N}(\theta_{1}) - \xi(\theta_{1})
\vert\right]  \mathbb{E}\left[  \vert\xi^{N}(\theta_{2}) \vert\right]
+\left\vert \xi\right\vert _{\mathcal{M}} \mathbb{E}\left[  \left\vert \xi
^{N}(\theta_{2}) -\xi(\theta_{2})\right\vert \right]
\end{align*}

To study \eqref{term filaments} we use the symmetry of $\gamma$,%
\begin{align*}
|\mathbb{E}\left[  \gamma_{1}(\theta_{1})\gamma_{2}(\theta_{2})\right]
-\mathbb{E}\left[  \xi^{N}(\theta_{1})\xi^{N}(\theta_{2})\right]  |= &
|\mathbb{E}\left[  \gamma_{1}(\theta_{1})\gamma_{2}(\theta_{2})\right]  \\
&  -\frac{N^{2}-N}{N^{2}}\mathbb{E}\left[  \gamma_{1}(\theta_{1})\gamma
_{2}(\theta_{2})\right]  -\frac{1}{N}\mathbb{E}\left[  \gamma_{1}(\theta
_{1})^{2}\right]  |\\
= &  \frac{1}{N}\left(  \mathbb{E}\left[  |\gamma_{1}(\theta_{1})\gamma
_{2}(\theta_{2})|\right]  +\mathbb{E}\left[  \gamma_{1}(\theta_{1}%
)^{2}\right]  \right)
\end{align*}
The expectations are bounded because of \eqref{assu chaos}, hence the last
term goes to zero.
\end{proof}

Now we want to apply the previous theorem to our filaments. We verify in the
following Lemma that the dynamic of filaments satisfies Theorem
\ref{propagation_chaos}, under suitable assumptions on the initial condition.

\begin{lemma}
\label{lemma chaos}Given a family $\gamma_{0}:=\{\gamma_{0}^{i,N}\}_{1\leq
i\leq N}$ of random variables on $C^{1}(\mathbb{R}^{3},\mathbb{R}^{3})$, and a
current $\xi_{0}$, we assume

\begin{enumerate}
\item $(\gamma_{0}^{1, N}, \dots, \gamma_{0}^{N, N})$ are exchangeable.

\item $\left\vert \gamma^{i,N}_{0}(\theta)\right\vert \leq C$, for a.e.
$\omega$, uniformly in $i$ and $N$.

\item $\vert\xi_{0}\vert_{\mathcal{M}} < \infty$

\item $\lim_{N\rightarrow\infty}\mathbb{E}\left[  \Vert\xi_{0}^{N}-\xi
_{0}\Vert\right]  =0 $
\end{enumerate}

There exists a time $T>0$ such that the solutions $\gamma_{t}:=\{\gamma
_{t}^{i,N}\}_{1\leq i\leq N}$ and $\xi_{t}$ of equations \eqref{curve eq} and
\eqref{flow of currents} starting respectively from $\gamma_{0}$ and $\xi_{0}$
satisfy conditions 1) - 4) at every time $t\in\left[  0,T\right]  $.
\end{lemma}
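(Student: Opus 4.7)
The plan is to verify the four conditions one at a time on a common deterministic interval $[0,T]$ on which the pathwise flow equation \eqref{flow of currents} admits unique solutions for every realization. Condition~1) propagates by a symmetry-plus-uniqueness argument: since $\alpha_j^N=1/N$ for all $j$, the drift in \eqref{curve eq} is invariant under any permutation $\pi$ of $\{1,\dots,N\}$, so the family $(\gamma_t^{\pi(i),N})_i$ is the unique solution driven by the initial data $(\gamma_0^{\pi(i),N})_i$. Combined with the exchangeability of $\gamma_0$, this gives exchangeability of $(\gamma_t^{1,N},\dots,\gamma_t^{N,N})$ at every $t$.

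Next I would extract from assumption 2) a uniform bound on the arc-length $L_i^N=\int_0^1|\partial_\sigma\gamma_0^{i,N}|\,d\sigma=|\gamma_0^{i,N}|_{\mathcal{M}}$, which then dominates $|\xi_0^N|_{\mathcal{M}}\le \frac{1}{N}\sum_i L_i^N$ uniformly in $\omega,N$. Together with assumption 3), both $\xi_0^N(\omega)$ and $\xi_0(\omega)$ lie in a deterministic ball of $(\mathcal{M},|\cdot|_{\mathcal{M}})$, so Theorem~\ref{Thm existence and uniqueness} and Lemma~\ref{lemma bounded in bounded} yield a deterministic $T>0$ on which the solutions exist and stay bounded in $\mathcal{M}$-norm by a deterministic constant $C_0$. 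This already gives condition~3) at every $t\in[0,T]$. For condition~2) I would use the push-forward representation of Theorem~\ref{thm curves as currents} to write $\gamma_t^{i,N}(\theta)=\gamma_0^{i,N}(\varphi_{\sharp}^{t,K\ast\xi^N}\theta)$ and estimate $\|\varphi_{\sharp}^{t,K\ast\xi^N}\theta\|_{\infty}\le \|D\varphi^{t,K\ast\xi^N}\|_{\infty}\|\theta\|_{\infty}$ via \eqref{Dphi}; since $\|\xi^N\|_T\le C_0$ deterministically, the exponential factor is also deterministic and hence $|\gamma_t^{i,N}(\theta)|\le C(\theta,T)$ uniformly in $i,N$ and a.s.

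Finally, condition~4) follows by applying Theorem~\ref{thm cont dep} pathwise: its constant depends only on $T$ and on the $\|\cdot\|$-norms of the initial currents, both of which are dominated by the uniform $|\cdot|_{\mathcal{M}}$-bounds from the previous step. Thus one gets a deterministic $C$ with $\|\xi_t^N(\omega)-\xi_t(\omega)\|\le C\|\xi_0^N(\omega)-\xi_0(\omega)\|$ a.s., and taking expectation yields $\mathbb{E}[\|\xi_t^N-\xi_t\|]\to 0$ from assumption 4) at $t=0$. The only real obstacle is coordinating a single $T$ on which every estimate is uniform in $\omega,i,N$ simultaneously; this is delicate because Theorem~\ref{thm cont dep} produces an $\omega$-dependent constant in general, and only the a.s. uniform-in-$\omega$ bound on initial lengths coming from assumption~2) lets one turn it into a deterministic one that survives the expectation.
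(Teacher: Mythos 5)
Your proposal follows the same route as the paper: exchangeability is preserved by the permutation symmetry of \eqref{curve eq} and pathwise uniqueness; the bound in condition 2) propagates via the push-forward identity $\gamma_t^{i,N}(\theta)=\gamma_0^{i,N}\bigl(\varphi_\sharp^{t,\cdot}\theta\bigr)$ together with \eqref{Dphi}; condition 3) is read off Theorem \ref{Thm existence and uniqueness}; condition 4) is Theorem \ref{thm cont dep}. The only difference is that you spell out more explicitly than the paper that assumption 2) yields a deterministic bound on the initial $|\cdot|_{\mathcal{M}}$-norms, which is what makes the existence time, the flow estimates, and the constant in Theorem \ref{thm cont dep} uniform in $\omega$, $i$, $N$ — a useful clarification but not a different argument.
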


\begin{proof}
Exchangeability is clearly preserved by the system of filaments, because there
is no other randomness and the dynamics of each filament is perfectly equal to
the one of the others.

To prove that $\gamma_{t}^{i, N}(\theta)$ is bounded, we use
\eqref{def gamma t} and its derivative and we obtain
\begin{align*}
\gamma_{t}^{i, N}(\theta) =  & \int_{0}^{1} \theta(\gamma_{t}^{i,N}%
)\frac{\partial}{\partial\sigma}\gamma_{t}^{i,N} d \sigma\\
=  &  \int_{0}^{1} \theta(\varphi^{K*\xi_{t}}(\gamma_{0}^{i, N}))(K*\xi
_{t})(\gamma_{0}^{i, N})\frac{\partial}{\partial\sigma}\gamma_{0}^{i,N}
d\sigma= \gamma_{0}^{i, N} (\bar{\theta})
\end{align*}
where $\bar{\theta}(x) := \theta(\varphi^{K*\xi_{t}}(x))(K*\xi_{t})(x)$ is
bounded continuous because of Lemma \ref{lemma on K}.

The third property follows immediately from Theorem
\ref{Thm existence and uniqueness}.

The last property, 4), is a direct consequence of Theorem \ref{thm cont dep}.
\end{proof}

\begin{corollary}
Under the assumptions of Lemma \ref{lemma chaos}, for every fixed
$r\in\mathbb{N}$, every family of test functions $(\theta_{1},\cdots
,\theta_{r})\in C_{b}(\mathbb{R}^{d},\mathbb{R}^{d})^{r}$ and every
$t\in\left[  0,T\right]  $, it holds%
\[
\lim_{N\rightarrow\infty}\mathbb{E}\left[  \left(  \gamma_{t}^{1,N}%
\otimes\cdots\otimes\gamma_{t}^{ir,N}\right)  (\theta_{1},\cdots,\theta
_{r})\right]  =\prod_{i=1}^{r}\xi_{t}(\theta_{i}).
\]

\end{corollary}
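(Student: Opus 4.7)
The corollary is a one-line consequence of applying Theorem \ref{propagation_chaos} pointwise in $t$, once Lemma \ref{lemma chaos} has done all the work of propagating the four hypotheses from $t=0$ to $t\in[0,T]$. So the plan is essentially: fix $t\in[0,T]$, check that the four assumptions of Theorem \ref{propagation_chaos} hold for $(\gamma_t^{1,N},\ldots,\gamma_t^{N,N})$ and the current $\xi_t$, and invoke that theorem.

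In more detail, I would proceed as follows. First, fix $t\in[0,T]$ and consider the $N$-tuple of random curves $\gamma_t^{N}:=(\gamma_t^{1,N},\ldots,\gamma_t^{N,N})$ obtained by solving \eqref{curve eq} from the initial data $\gamma_0^{i,N}$, and the associated empirical currents $\xi_t^N$ given by \eqref{def curr}. Next, I would appeal to Lemma \ref{lemma chaos}: conditions (1)--(4) of that lemma guarantee that at time $t$ the family $\gamma_t^N$ is exchangeable, that $|\gamma_t^{i,N}(\theta)|$ is bounded uniformly in $i,N$ and almost surely, that $|\xi_t|_{\mathcal{M}}<\infty$, and that $\mathbb{E}[\|\xi_t^N-\xi_t\|]\to0$. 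In particular the a.s.\ bound on $\gamma_t^{i,N}(\theta)$ together with $|\xi_t|_{\mathcal{M}}<\infty$ supplies hypothesis \eqref{assu chaos} of Theorem \ref{propagation_chaos}, and the $L^1$-convergence $\mathbb{E}[\|\xi_t^N-\xi_t\|]\to0$ implies $\mathbb{E}[|\xi_t^N(\theta)-\xi_t(\theta)|]\to0$ for any $\theta\in C_b(\mathbb{R}^d,\mathbb{R}^d)$ (since $|\xi_t^N(\theta)-\xi_t(\theta)|\le\|\xi_t^N-\xi_t\|(\|\theta\|_\infty+\mathrm{Lip}(\theta))$ when $\theta$ is Lipschitz; for a general bounded continuous $\theta$ one uses a standard mollification/approximation together with the uniform bound on $|\xi_t^N|_{\mathcal{M}}$ provided by Lemma \ref{lemma chaos}), which is exactly \eqref{conv_weak_L1}.

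With all hypotheses of Theorem \ref{propagation_chaos} in force at time $t$, I can apply that theorem directly to $\gamma_t^N$ and $\xi_t$, obtaining
\[
\lim_{N\rightarrow\infty}\mathbb{E}\left[\left(\gamma_t^{1,N}\otimes\cdots\otimes\gamma_t^{r,N}\right)(\theta_1,\ldots,\theta_r)\right]=\prod_{i=1}^{r}\xi_t(\theta_i),
\]
which is the desired conclusion. The only nontrivial point, and therefore the main obstacle, is the passage from the convergence in the weak norm $\|\cdot\|$ supplied by Lemma \ref{lemma chaos}(4) to the convergence $\mathbb{E}[|\xi_t^N(\theta)-\xi_t(\theta)|]\to0$ for a general (not necessarily Lipschitz) bounded continuous test function $\theta$; this is handled by approximating $\theta$ uniformly on compact sets by Lipschitz functions and using the a.s.\ uniform bound on $|\xi_t^N|_{\mathcal{M}}$ to control the tail. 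Everything else is a mechanical check that the hypotheses propagate.
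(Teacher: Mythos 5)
Your high-level structure is exactly the intended one: fix $t\in[0,T]$, invoke Lemma \ref{lemma chaos} to transport the four hypotheses from time $0$ to time $t$, verify that these are precisely the hypotheses \eqref{assu chaos}--\eqref{conv_weak_L1} of Theorem \ref{propagation_chaos}, and then apply that theorem. The paper gives no written proof of the corollary, and this is clearly what it has in mind, so in terms of the route you are on the mark.

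The one place where you should be more careful is exactly the point you flag at the end. Lemma \ref{lemma chaos}(4) gives $\mathbb{E}\|\xi_t^N-\xi_t\|\to 0$, which controls $\mathbb{E}|\xi_t^N(\theta)-\xi_t(\theta)|$ only when $\theta$ is bounded Lipschitz, because $|\eta(\theta)|\le\|\eta\|(\|\theta\|_\infty+\mathrm{Lip}(\theta))$ needs a finite Lipschitz constant. Your proposed upgrade to all $\theta\in C_b$ --- ``approximate $\theta$ uniformly on compact sets by Lipschitz functions and use the uniform bound on $|\xi_t^N|_{\mathcal{M}}$ to control the tail'' --- does not actually close the gap: a uniform bound $|\xi_t^N|_{\mathcal{M}}\le C$ gives $|\xi_t^N(\theta-\theta_\epsilon)|\le C\,\|\theta-\theta_\epsilon\|_\infty$ on all of $\mathbb{R}^d$, and a general $\theta\in C_b$ need not be uniformly approximable by Lipschitz functions on $\mathbb{R}^d$. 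To make your tail argument work you would need uniform tightness of the family $\{\xi_t^N\}$ (e.g.\ uniform compact support of the curves $\gamma_t^{i,N}$), and this does not follow from assumption (2) of Lemma \ref{lemma chaos} alone: $|\gamma_0^{i,N}(\theta)|\le C$ for every $\theta$ is compatible with curves living in unbounded regions, since the pairing involves $\partial_\sigma\gamma$.

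In fact this mismatch is inherited from the paper: Theorem \ref{propagation_chaos} is stated with $\theta\in C_b$, but its proof opens with ``For every $\theta_1,\theta_2$ bounded Lipschitz continuous functions'' and never uses the Lipschitz property afterwards, only hypothesis \eqref{conv_weak_L1} for the specific $\theta_i$ in play. So the cleanest and most defensible version of your argument is: apply Theorem \ref{propagation_chaos} for \emph{bounded Lipschitz} test functions $\theta_1,\dots,\theta_r$, for which Lemma \ref{lemma chaos}(4) directly supplies \eqref{conv_weak_L1}, and state the corollary in that class (consistent with the weak norm $\|\cdot\|$ used throughout the paper). If you insist on general $\theta\in C_b$, you must either add a tightness hypothesis or argue from the structure of the problem (bounded initial curves plus bounded drift $K\ast\xi$ implies the $\gamma_t^{i,N}$ stay in a fixed compact set, and a weak limit of currents supported in a fixed compact set is itself supported there) --- and then your compact-approximation plan does go through. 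Spell one of these two options out rather than gesturing at ``control the tail.''
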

\vspace{1cm}

\noindent
{\bf Acknowledgements}: Hakima Bessaih's research is partially supported by the NSF grant DMS-1418838.

\end{document}